\newcommand{\bm}[1]{\boldsymbol{#1}}
\newcommand{\va}{{\mathbf{a}}}
\newcommand{\vv}{{\mathbf{v}}}
\newcommand{\vx}{{\mathbf{x}}}
\newcommand{\vy}{{\mathbf{y}}}
\newcommand{\vz}{{\mathbf{z}}}
\newcommand{\vI}{{\mathbf{I}}}
\newcommand{\vL}{{\mathbf{L}}}
\newcommand{\vV}{{\mathbf{V}}}
\newcommand{\vW}{{\mathbf{W}}}
\newcommand{\vX}{{\mathbf{X}}}
\newcommand{\vY}{{\mathbf{Y}}}
\newcommand{\vZ}{{\mathbf{Z}}}
\newcommand{\vlam}{{\bm{\lambda}}}
\newcommand{\vLam}{{\bm{\Lambda}}}
\newcommand{\vzeta}{{\bm{\zeta}}}
\newcommand{\vxi}{{\bm{\xi}}}
\newcommand{\cB}{{\mathcal{B}}}
\newcommand{\cD}{{\mathcal{D}}}
\newcommand{\cE}{{\mathcal{E}}}
\newcommand{\cG}{{\mathcal{G}}}
\newcommand{\cJ}{{\mathcal{J}}}
\newcommand{\cN}{{\mathcal{N}}}
\newcommand{\cR}{{\mathcal{R}}}
\newcommand{\cX}{{\mathcal{X}}}
\newcommand{\cY}{{\mathcal{Y}}}
\newcommand{\cZ}{{\mathcal{Z}}}
\newcommand{\vareps}{\varepsilon}
\newcommand{\EE}{\mathbb{E}} 
\newcommand{\RR}{\mathbb{R}} 
\newcommand{\vzero}{\mathbf{0}} 
\newcommand{\vone}{{\mathbf{1}}} 
\newcommand{\dist}{\mathrm{dist}}    
\newcommand{\prox}{{\mathbf{prox}}} 
\newcommand{\dom}{{\mathrm{dom}}} 
\newcommand{\Null}{{\mathrm{Null}}} 
\newcommand{\avg}{{\mathrm{avg}}}
\newcommand{\Span}{{\mathrm{Span}}}
\newcommand{\st}{\mbox{ s.t. }}
\newcommand{\cmark}{\ding{51}}
\newcommand{\xmark}{\ding{55}}
\DeclareMathOperator*{\argmin}{arg\,min} 
\DeclareMathOperator*{\argmax}{arg\,max} 
\newcommand{\bc}{\begin{center}}
\newcommand{\ec}{\end{center}}
\newcommand{\bdm}{\begin{displaymath}}
\newcommand{\edm}{\end{displaymath}}
\newcommand{\beq}{\begin{equation}}
\newcommand{\eeq}{\end{equation}}
\newcommand{\bfl}{\begin{flushleft}}
\newcommand{\efl}{\end{flushleft}}
\newcommand{\bt}{\begin{tabbing}}
\newcommand{\et}{\end{tabbing}}
\newcommand{\beqn}{\begin{eqnarray}}
\newcommand{\eeqn}{\end{eqnarray}}
\newcommand{\beqs}{\begin{align*}} 
\newcommand{\eeqs}{\end{align*}}  
\newtheorem{remark}{Remark}[section]
\newtheorem{assumption}{Assumption}
\begin{document}

\title{Decentralized gradient descent maximization method for composite nonconvex strongly-concave minimax problems}

\author{Yangyang Xu\thanks{\url{xuy21@rpi.edu}, Department of Mathematical Sciences, Rensselaer Polytechnic Institute, Troy, NY 12180}}

\date{\today}

\maketitle

\begin{abstract} Minimax problems have recently attracted a lot of research interests. A few efforts have been made to solve decentralized nonconvex strongly-concave (NCSC) minimax-structured  optimization; however, all of them focus on smooth problems with at most a constraint on the maximization variable. In this paper, we make the first attempt on solving composite NCSC minimax problems that can have convex nonsmooth terms on both minimization and maximization variables. Our algorithm is designed based on a novel reformulation of the decentralized minimax problem that introduces a multiplier to absorb the dual consensus constraint. The removal of dual consensus constraint enables the most aggressive (i.e., local maximization instead of a gradient ascent step) dual update that leads to the benefit of taking a larger primal stepsize and better complexity results. In addition, the decoupling of the nonsmoothness and consensus on the dual variable eases the analysis of a decentralized algorithm; thus our reformulation creates a new way for interested researchers to design new (and possibly more efficient) decentralized methods on solving NCSC minimax problems. We show a global convergence result of the proposed algorithm and an iteration complexity result to produce a (near) stationary point of the reformulation. Moreover, a relation is established between the (near) stationarities of the reformulation and the original formulation. With this relation, we show that when the dual regularizer is smooth, our algorithm can have lower complexity results (with reduced dependence on a condition number) than existing ones to produce a near-stationary point of the original formulation.  Numerical experiments are conducted on a distributionally robust logistic regression to demonstrate the performance of the proposed algorithm.

\vspace{0.2cm}

\noindent {\bf Keywords:} composite minimax problem, nonconvex strongly-concave, decentralized algorithm, iteration complexity. 


\end{abstract}

\section{Introduction}
In this paper, we consider the minimax-structured problem
\begin{equation}\label{eq:min-max-prob}
\min_{\vx\in\RR^{n_1}} \max_{\vy\in \RR^{n_2}}  f(\vx, \vy) + g(\vx) - h(\vy), \text{ with }\textstyle f(\vx, \vy):=\frac{1}{m}\sum_{i=1}^m f_i(\vx, \vy),
\end{equation}
where $f_i$ is a nonconvex strongly-concave (NCSC) differentiable function with a Lipschitz continuous gradient for each $i=1,\ldots,m$, $g$ and $h$ are closed convex functions. Minimax-structured problems arise in many applications such as power control and transceiver design \cite{lu2020hybrid}, distributionally robust optimization \cite{wiesemann2014distributionally}, generative adversarial network \cite{goodfellow2020generative}, optimal transport \cite{huang2021convergence}, and reinforcement learning \cite{dai2018sbeed}. 

Recent years have witnessed a surge of research interest in designing algorithms for solving minimax-structured nonconvex problems, e.g., \cite{grimmer2022landscape, xu2023unified, zhang2021complexity, ostrovskii2021efficient, thekumparampil2019efficient, jin2020local, lin2020near, rafique2022weakly, liu2021first, zhang2020single, li2022nonsmooth, huang2022accelerated, lin2020gradient, zhang2022sapd+}. Different from most of the existing works, we are interested in designing a decentralized algorithm for solving \eqref{eq:min-max-prob}. We assume that there are $m$ agents on a connected graph or network $\cG=(\cN, \cE)$, where $\cN$ denotes the set of nodes (or agents) and $\cE$ the set of edges of $\cG$. For each $i\in \cN$, the function $f_i$ is owned \emph{privately} by the $i$-th agent. In order for the $m$ agents to collaboratively solve \eqref{eq:min-max-prob}, each agent will keep a copy 
of the primal-dual variable, 
and consensus on all local copies will be enforced. In order to achieve the consensus, the agents will send/receive local information to/from their one-hop neighbors and perform neighbor (weighted) averaging. 

\subsection{Existing decentralized methods for minimax problems} A majority of existing works on decentralized methods for solving minimax-structured optimization are mainly on (strongly-) convex (strongly-) concave problems or saddle-point or variational inequality (VI) reformulation of convex multi-agent optimization with coupling constraints, e.g., \cite{mateos2015distributed, mateos2016distributed, falsone2017dual, koshal2011multiuser, richards2004decentralized, paccagnan2016distributed, falsone2020tracking, mukherjee2020decentralized, kovalevoptimal2022, beznosikov2021distributed, metelev2022decentralized}. Several recent works study nonconvex concave (or even nonconcave) minimax problems and are more closely related to our work. Below, we review those works on decentralized minimax-structured nonconvex optimization.

The work \cite{tsaknakis2020decentralized} considers a special class of decentralized minimax problems, where consensus constraint is imposed on either the primal variable or the dual variable, but not both. Two decentralized gradient descent ascent (D-GDA) methods are given.  However, its convergence result is established only for the method that solves problems without dual consensus constraint.  When the involved functions are smooth and strongly concave about the dual variable that is restricted in a compact domain, the D-GDA in \cite{tsaknakis2020decentralized} can produce an $\vareps$-stationary solution within $O(\vareps^{-2})$ outer iterations and  communication rounds. Because there is no consensus on the dual variable and the dual part is maximized to a certain accuracy per outer iteration, the method can be viewed as a decentralized method for solving a nonconvex smooth minimization problem with inexact gradients. A decentralized policy GDA is given in \cite{lu2021decentralized} for solving multi-agent reinforcement learning (MARL). The algorithm is developed based on the Lagrangian function of the MARL, which does not have consensus on the dual variable (i.e., the Lagrangian multiplier) either. Because the Lagrangian function is merely concave about the dual variable, the complexity result established in \cite{lu2021decentralized} is $O(\vareps^{-4})$, which is higher than that in \cite{tsaknakis2020decentralized} but the best-known for nonconvex-concave problems. 

Motivated by the cooperative MARL, \cite{zhang2021taming} presents a gradient tracking based D-GDA for solving 
finite-sum structured NCSC minimax problems, where consensus is imposed on both primal and dual variables. Its complexity result, in terms of both gradient evaluation and communication, is $O(\vareps^{-2})$ to produce an $\vareps$-stationary solution. Moreover, to reduce the computational complexity dependence on the number of component functions, a momentum-based variance-reduction (VR) technique is employed to exploit the finite-sum structure. While the method in \cite{zhang2021taming} needs to periodically compute the full gradient, the decentralized method in  \cite{gao2022decentralized} uses the SAGA-type VR \cite{defazio2014saga}, and it does not need to compute a full gradient periodically but instead maintains an estimate of all component gradients. Its computation and communication complexity is in the same order as that in \cite{zhang2021taming}. Both of \cite{zhang2021taming, gao2022decentralized} consider smooth problems without a hard constraint or a nonsmooth regularizer. In contrast, \cite{chen2022simple} considers a slightly more general class of NCSC minimax problems, which can have a convex constraint on the dual variable. Compared to the results in \cite{zhang2021taming, defazio2014saga}, the computation complexity in \cite{chen2022simple} is similar, but its communication complexity has a better dependence on the condition number of the problem and the graph topology. However, the method in \cite{chen2022simple} relies on the use of multi-communication per computation step, which requires more coordination between the agents.

Decentralized nonconvex nonconcave (NCNC) minimax problems have been studied in \cite{liu2019decentralized, liu2020decentralized}. The work \cite{liu2019decentralized} gives a proximal-point based method. It first formulates the optimality conditions of the considered minimax problem, where a nonmontone operator $\cB+\cR$ is involved; see Eqn.(8) in \cite{liu2019decentralized} for details. Then it aims at finding the root of the resulting system by using  the resolvent of $\cB+\cR$ at each agent. Under the assumption of weak monotonicity of $\cB+\cR$ and the Minty VI condition \cite{liu2021first}, an $O(\vareps^{-2})$ iteration complexity result is established in \cite{liu2019decentralized} to produce an $\vareps$-stationary solution. However, the resolvent of $\cB+\cR$ is generally very expensive to compute, and this limits the applications of the algorithm in \cite{liu2019decentralized}. The method in \cite{liu2020decentralized} is a decentralized extension of the optimistic stochastic gradient method in \cite{liu2019towards}, which is a stochastic extragradient method. Different from the methods discussed above, it only needs a stochastic approximation of each gradient by using $O(1)$ samples. However, its computational complexity is much higher and reaches $O(\vareps^{-12})$ in order to find an $\vareps$-stationary solution. On stochastic VI problems that include minimax problems as special cases, \cite{beznosikov2021decentralized} presents a decentralized stochastic extragradient method, which is unified for strong-monotone, monotone, and non-monotone VIs. For the non-monotone case, it assumes the Minty VI condition, and the stationarity violation, however, will not approach to zero even if deterministic gradients are used. The Minty VI condition may not hold for NCSC minimax problems (see a simple example in the appendix); hence the existing results do not apply to our case.

\subsection{Contributions} Our contributions are three-fold. \emph{First}, we give an equivalent reformulation of \eqref{eq:min-max-prob} in a decentralized setting, under a mild assumption that $\dom(h)$ has a nonempty relative interior. The reformulation has a primal consensus constraint but eliminates the dual one by introducing a Lagrangian multiplier. \emph{Second}, the reformulation enables us to perform local (approximate) maximization on the dual variable, without coordination between the multiple agents. By this advantage, we propose a novel decentralized algorithm through performing gradient descent on the primal variable and (approximate) maximization on the dual variable. The algorithm does not need multi-communication per iteration. Also, the local (approximate) maximization can be terminated based on a checkable stopping condition. Thus it can be easily implemented. To the best of our knowledge, the proposed algorithm is the first one for solving decentralized NCSC minimax problems with regularization terms on both primal and dual variables, while existing methods in \cite{zhang2021taming, gao2022decentralized, chen2022simple} only apply to smooth problems that can at most have 
a convex constraint on the dual variable. 
\emph{Third}, we establish global convergence and iteration complexity results of the proposed algorithm. It can produce an $\vareps$-stationary solution in expectation (see Def.~\ref{def:opt-cond-phi-eps} below) of the reformulation within $O(\frac{\kappa}{\vareps^2})$ iterations. This leads to $O(\frac{\kappa}{\vareps^2})$ communication rounds and $\vx$-gradients, and $\tilde O(\frac{\kappa}{\vareps^2} \sqrt{\kappa_y})$\footnote{Throughout this paper, $\tilde O$-notation may hide a factor of $\ln\frac{1}{\vareps}$ within the big-$O$.} $\vy$-gradients, where $\kappa$ and $\kappa_y$ are two condition numbers; see \eqref{eq:kappa}. In addition, we show that when $h$ is smooth, to produce an $\vareps$-stationary point in expectation (see Remark~\ref{rm:stat-original}) of the original formulation, $O(\frac{\kappa^2}{\vareps^2})$ iterations will be sufficient, by establishing a relation between the near stationarities of the two formulations. Compared to existing results for smooth NCSC minimax problems, our result is better by a factor of $\kappa$ for $\vx$-gradients and $\frac{\kappa}{\sqrt{\kappa_y}}$ for $\vy$-gradients by ignoring the logarithmic factor; see Table~\ref{tab:complexity} for more detailed comparisons.

\renewcommand{\arraystretch}{1.2}
\begin{table}
\caption{Comparison of our decentralized method to a few existing ones for solving NCSC minimax problems in the form of \eqref{eq:min-max-prob}. Because of different settings of problems and algorithms, we specialize the complexity of the compared methods on smooth problems, i.e., $g\equiv 0$ and $h\equiv 0$, by using deterministic gradients. Here, $\iota_\cX$ and $\iota_\cY$ denote the indicator functions on convex sets $\cX$ and $\cY$; the results in \cite{tsaknakis2020decentralized, zhang2021taming} do not have an explicit dependence on $\kappa$, so we use $a$, $b$ and $c$ to denote the unknown orders; the more conditions required on $\vW$ are in addition to those in Assumption~\ref{assump-W} below.}\label{tab:complexity}
\resizebox{\textwidth}{!}{
\begin{tabular}{|c||c|c|c|c||c|c|c|}
\hline
\multirow{2}{*}{Reference} & \multicolumn{4}{|c||}{\textbf{Key Assumptions on Problem and Algorithm}} & \multicolumn{3}{|c|}{\textbf{Results for \eqref{eq:min-max-prob} if $g\equiv0, h\equiv0$}} \\\cline{2-8}
 &  setting for $(g, h)$ & More on $\vW$  & consensus &multi-comm. & \#comm. & \#$\vx$-grad. & \#$\vy$-grad. \\\hline\hline
\multirow{2}{*}{\cite{tsaknakis2020decentralized}} & $g\equiv 0, h = \iota_\cY$ & \multirow{2}{*}{symmetric} & only $\vx$ & \multirow{2}{*}{\xmark} & $O(\frac{\kappa^a}{\vareps^2})$ & $O(\frac{\kappa^a}{\vareps^2})$ & $\tilde O(\frac{\kappa^c}{\vareps^2})$ \\\cline{6-8}
& $g=\iota_\cX, h\equiv 0$  & &  only $\vy$ & & \multicolumn{3}{|c|}{not established}  \\ \hline
\cite{zhang2021taming} & $g\equiv 0, h \equiv 0$ & symmetric & $\vx$ and $\vy$ & \xmark & $O(\frac{\kappa^b}{\vareps^2})$ & \multicolumn{2}{|c|}{$O(\frac{\kappa^b}{\vareps^2})$} \\ \hline
\multirow{2}{*}{\cite{gao2022decentralized}} & \multirow{2}{*}{$g\equiv 0, h \equiv 0$} & symmetric & \multirow{2}{*}{$\vx$ and $\vy$} & \multirow{2}{*}{\xmark} & \multirow{2}{*}{$O(\frac{\kappa^3}{\vareps^2})$} & \multicolumn{2}{|c|}{\multirow{2}{*}{$O(\frac{\kappa^3}{\vareps^2})$}} \\ 
& & nonnegative & & & & \multicolumn{2}{|c|}{}  \\\hline\hline 
\multirow{2}{*}{\cite{chen2022simple}} & \multirow{2}{*}{$g\equiv0, h=\iota_\cY$} & symmetric  & \multirow{2}{*}{$\vx$ and $\vy$} & \multirow{2}{*}{\cmark} & \multirow{2}{*}{$O(\frac{\kappa^2}{\vareps^2})$} & \multicolumn{2}{|c|}{\multirow{2}{*}{$O(\frac{\kappa^2}{\vareps^2})$} } \\
& & $\vzero \preccurlyeq \vW \preccurlyeq \vI$ & & & & \multicolumn{2}{|c|}{}  \\\hline
\textbf{This paper} & convex functions & none & $\vx$ and $\vy$ & \xmark & $O(\frac{\kappa^2}{\vareps^2})$ & $O(\frac{\kappa^2}{\vareps^2})$ & $\tilde O(\frac{\kappa^{2}}{\vareps^2} \sqrt{\kappa_y})$ \\\hline
\end{tabular}
}
\end{table}

\subsection{Notation}
We use bold lower-case letters $\vx, \vy, \ldots$ for vectors and bold upper-case letters $\vX, \vY,\ldots$ for matrices. 
$\vzero$ and $\vone$ denote all-zero and all-one vectors; $\vI$ is reserved for the identity matrix. 
$[m]$ denotes the set $\{1,2,\ldots, m\}$. We use subscript $_i$ in local vectors owned by the $i$-th agent such as $\vx_i$; superscript $^t$ is used for the $t$-th iterate.
We let 
\begin{subequations}\label{eq:notation-xyz-F}
\begin{align}
&\vX = \big[ \vx_1, \ldots, \vx_m\big]^\top, \quad \vY = \big[ \vy_1, \ldots, \vy_m\big]^\top, \quad \vz_i := [\vx_i; \vy_i]\in\RR^{n_1+n_2},\forall\, i\in [m], \quad \vZ = \big[ \vz_1, \ldots, \vz_m\big]^\top, \label{eq:xyz}\\
& \nabla_\vx F(\vZ) = \big[\nabla_\vx f_1(\vz_1),\ldots, \nabla_\vx f_m(\vz_m) \big]^\top, \quad \nabla_\vy F(\vZ) = \big[\nabla_\vy f_1(\vz_1),\ldots, \nabla_\vy f_m(\vz_m) \big]^\top,\label{eq:nabla-F}
\end{align}
\end{subequations}
where $\nabla_\vx f_i$ takes the partial gradient about $\vx$ and $\nabla_\vy f_i$ about $\vy$.
Also, we define
\begin{equation}\label{eq:avg-perp-not}
\textstyle \vx_\avg = \frac{1}{m}\sum_{i=1}^m \vx_i,\ \vX_\avg = \vone \vx_\avg^\top, \ \vX_\perp = \vX - \vX_\avg.
\end{equation}
For a closed convex function $r$, $\prox_r(\vx):=\argmin_\vy \{r(\vy) + \frac{1}{2}\|\vy-\vx\|^2\}$ is the proximal mapping of $r$.

\subsection{Outline} The rest of the paper is organized as follows. Sect.~\ref{sec:alg} gives our algorithm. Convergence results are presented in Sect.~\ref{sec:analysis} and numerical results in Sect.~\ref{sec:numerical}. Finally, Sect.~\ref{sec:conclusion} concludes the paper.

\section{Decentralized formulations and proposed algorithm}\label{sec:alg}
With the introduction of local variables $\{(\vx_i, \vy_i)\}$, one can formulate \eqref{eq:min-max-prob} equivalently into a consensus-constrained problem as follows
\begin{equation}\label{eq:min-max-prob-dec}
\min_{\vx_1,\ldots,\vx_m} \max_{\vy_1, \ldots, \vy_m} \textstyle \frac{1}{m}\sum_{i=1}^m \big( f_i(\vx_i, \vy_i) + g(\vx_i) - h(\vy_i) \big), \st (\vx_i, \vy_i) = (\vx_j, \vy_j), \, \forall\, (i, j)\in \cE.
\end{equation}
To achieve the consensus, a mixing (or gossip) matrix $\vW\in\RR^{m\times m}$ is often used for neighbor (weighted) averaging. When $\vW$ satisfies certain conditions (see  Assumption~\ref{assump-W} below), the $\vy$-consensus 
can be expressed as $\vW\vY = \vY$. 
Hence, with a multiplier $\vLam\in\RR^{m\times n_2}$, we have from the strong duality \cite{rockafellar1997convex}, which holds if the relative interior of $\dom(h)$ is nonempty, that for each $\vX$, 
$\max_\vY\left\{\textstyle \frac{1}{m}\sum_{i=1}^m \big( f_i(\vx_i, \vy_i) - h(\vy_i)\big), \st \vW\vY = \vY\right\} = \min_\vLam \max_\vY \Phi(\vX, \vLam, \vY),$
where
\begin{equation}\label{eq:def-Phi}
\Phi(\vX, \vLam, \vY) := \textstyle \frac{1}{m}\sum_{i=1}^m \big( f_i(\vx_i, \vy_i) - h(\vy_i)\big) - \frac{L}{2\sqrt{m}} \big\langle \vLam, (\vW- \vI) \vY\big\rangle. 
\end{equation}
Therefore, we can reformulate \eqref{eq:min-max-prob-dec} equivalently into 
\begin{equation}\label{eq:min-max-Phi}
\min_{\vX, \vLam} \max_\vY \Phi(\vX, \vLam, \vY) + \textstyle \frac{1}{m}\sum_{i=1}^m g(\vx_i), \st \vx_i = \vx_j, \forall\, (i, j) \in \cE.
\end{equation}

Compared to \eqref{eq:min-max-prob-dec}, there is no $\vy$-consensus constraint in \eqref{eq:min-max-Phi}. This enables an aggressive way to update $\vY$, which can further lead to a larger stepsize for $\vx$-update; see Remark~\ref{rm:stepsize-set}. More precisely, with $\vX$ and $(\vW-\vI)^\top\vLam$ fixed, the $\vy$-subproblems on all agents are independent, and thus we can greedily update $\vY$ by (approximately) maximizing each local function, \emph{without the need of neighbor communication}. By the (approximate) $\vY$-maximizer, we can obtain $\nabla Q(\vX, \vLam)$ or its estimate, where $Q(\vX, \vLam):= \max_\vY \Phi(\vX,\vY, \vLam)$. 

Based on these observations, we propose a decentralized gradient descent maximization (D-GDMax) method for solving \eqref{eq:min-max-prob}. The pseudocode is shown in Algorithm~\ref{alg:dgdm}, where $d_i^t(\cdot)$ is defined as
\begin{equation}\label{eq:def-d-i}
d_i^t(\vy) := \textstyle f_i(\vx_i^t, \vy) - h(\vy) - \frac{L\sqrt{m}}{2} \big\langle \tilde\vlam_i^t, \vy\big\rangle, \forall\, i\in [m], \forall\, t \ge0.
\end{equation} 
At each iteration of D-GDMax, the agents, in parallel, first perform one round of neighbor communication of their local $\vx$-variables and do a local proximal gradient step to update $\vx$-variables along a tracked $\vx$-gradient, secondly update $\vlam$-variables by gradient descent with one round of neighbor communication of local $\vy$-variables, then update $\vy$-variables by (approximately) maximizing local functions, and finally with the (approximate) $\vy$-maximizers, compute local $\vx$-gradients and perform a gradient tracking step. 

\begin{algorithm}[h]
\caption{Decentralized Gradient Descent Maximization (D-GDMax) Method for \eqref{eq:min-max-prob}}\label{alg:dgdm}
\DontPrintSemicolon
{\bfseries Input:} $\vx^0 \in \dom(g)$, $\eta_x > 0$, $\eta_\lambda>0$, a nonnegative number sequence $\{\delta_t\}_{t\ge0}$. \;
{\bfseries Initialization:} for each $i\in [m]$, set $\vx_i^0 = \vx^0,\ \vlam_i^0 = \tilde\vlam_i^0=\vzero$, find $\vy_i^0$ such that $\dist\big(\vzero, \partial d_i^0(\vy_i^0)\big) \le \delta_0$, and let $\vv_i^0 = \nabla_\vx f_i(\vx_i^0, \vy_i^0), \forall \, i\in[m]$.\; 
\For{$t = 0, 1,\ldots,$}{
\For{$i=1,\ldots,m$ {\bfseries in parallel}}{
Let $\tilde\vx_i^t = \sum_{j=1}^m w_{ij} \vx_j^t$ and update $\vx_i^{t+1} = \prox_{\eta_x g}\big(\tilde\vx_i^t - \eta_x \vv_i^t\big)$.\;
Update $\vlam_i^{t+1} = \vlam_i^{t} + \frac{L\eta_\lambda}{2\sqrt{m}} \big(\sum_{j=1}^m w_{ij} \vy_j^t - \vy_i^t\big)$ and let $\tilde\vlam_i^{t+1} = \sum_{j=1}^m w_{ji} \vlam_j^{t+1} - \vlam_i^{t+1}$.\;
Find $\vy_i^{t+1}$ such that $\dist\big(\vzero, \partial d_i^{t+1}(\vy_i^{t+1})\big)\le \delta_{t+1}$, where $d_i^t$ is defined in \eqref{eq:def-d-i}.\;
Let $\vv_i^{t+1} = \sum_{j=1}^m w_{ij} \vv_j^t + \nabla_\vx f_i(\vx_i^{t+1}, \vy_i^{t+1}) - \nabla_\vx f_i(\vx_i^{t}, \vy_i^{t})$.
}
}
\end{algorithm}

A centralized version of our algorithm has been presented in \cite{jin2020local} for smooth minimax problems. However, \cite{jin2020local} does not exploit strong concavity, so its complexity is worse  than ours in the special centralized setting. In addition, though our reformulation in \eqref{eq:min-max-Phi} does not have a consensus constraint on $\vY$, the primal problem will not reduce to a standard decentralized minimization problem, because the term $\langle \vLam, (\vW-\vI)\vY\rangle$ will lead to coupling of the primal variables after $\vY$ is removed by maximization. Hence, our algorithm is fundamentally different from a decentralized method for solving a consensus minimization problem. 

\section{Convergence analysis}\label{sec:analysis}

In this section, we analyze the D-GDMax method in Algorithm~\ref{alg:dgdm}. The following (scalar or vector) functions will be used in our analysis or to state our results:
\begin{align}
& p(\vx) := \max_\vy \left\{\textstyle \frac{1}{m}\sum_{i=1}^m f_i(\vx, \vy) - h(\vy) \right\},\quad P(\vx, \vLam) := \max_\vY \Phi(\vone\vx^\top, \vY, \vLam), \label{eq:def-p}\\
& \phi(\vx, \vLam) := P(\vx, \vLam) + g(\vx), \quad S_\Phi(\vX,\vLam) := \argmax_\vY  \Phi(\vX, \vLam, \vY).\label{eq:def-P}
\end{align}
Here, $p$ is the smooth term for the primal problem of \eqref{eq:min-max-prob}; $P$ can be viewed as the smooth term for  the primal problem of \eqref{eq:min-max-Phi} and $\phi$ is the objective function. 

\subsection{Assumptions}
We make the following assumptions for the considered problem \eqref{eq:min-max-prob}.
\begin{assumption}\label{assump-func}
Problem \eqref{eq:min-max-prob} has a finite solution $\vx^*:=\argmin_\vx (p+g)(\vx)$. For each $i\in [m]$, $f_i$ is $L$-smooth in an open set $\cZ \supseteq \dom(g)\times \dom(h)$, and $f_i(\vx, \cdot)$ is $\mu$-strongly concave with $\mu>0$, 
i.e.,
\begin{align}
&\|\nabla f_i(\vz) - \nabla f_i(\tilde \vz)\| \le L\|\vz - \tilde\vz\|, \forall\, \vz, \tilde\vz \in \cZ,\label{eq:smooth-fi}\\
&\big\langle \vy -\tilde\vy, \nabla_\vy f_i(\vx, \vy) - \nabla_\vy f_i(\vx, \tilde\vy) \big\rangle \le -\mu \|\vy - \tilde \vy\|^2, \forall\, \vx \in \dom(g), \forall\, \vy, \tilde\vy \in \dom(h). 
\end{align}
\end{assumption}

The strong concavity condition in Assumption~\ref{assump-func} is also assumed in existing works (e.g., \cite{chen2022simple, zhang2021taming}) about decentralized methods for solving nonconvex minimax problems. Our setting is more general than that in those existing works, by including convex regularizers $g$ and $h$. 
In order to tighten the dependence of our complexity on certain constants, we let $L_y$ be the smoothness constant of $f_i(\vx, \cdot)$, namely, 
\begin{equation}\label{eq:smooth-y}
\|\nabla_\vy f_i(\vx, \vy) - \nabla_\vy f_i(\vx, \tilde\vy)\| \le L_y \|\vy - \tilde\vy\|, \forall \, \vy, \tilde\vy\in \dom(h),\ \forall\, \vx\in \dom(g),\ \forall\, i \in [m].
\end{equation}
The existence of $L_y$ is guaranteed by Assumption~\ref{assump-func}, and it can be much smaller than $L$. We define
\begin{equation}\label{eq:kappa}
\textstyle \kappa = \frac{L}{\mu}, \quad \kappa_y = \frac{L_y}{\mu}.
\end{equation}

\begin{assumption}\label{assump-relint}
The relative interior of $\dom(h)$ is nonempty.
\end{assumption}

 The condition above is mild and implies equivalence between \eqref{eq:min-max-prob-dec} and \eqref{eq:min-max-Phi} and $p(\vx) = \min_\vLam P(\vx,\vLam), \forall\, \vx$. 

\begin{assumption}\label{assump-W}
The mixing matrix $\vW\in\RR^{m\times m}$ satisfies the conditions: $\mathrm{(i)}$ $w_{ij} = 0$ if $j$ is not a neighbor of $i$; $\mathrm{(ii)}$ $\rho:= \|\vW-\frac{1}{m}\vone\vone^\top\|_2 < 1$; $\mathrm{(iii)}$ $\Null(\vW-\vI) = \Span\{\vone\}$ and $\vW^\top\vone = \vone$; $\mathrm{(iv)}$  $\|\vW-\vI\|_2 \le 2$.
\end{assumption}

In the above assumption, condition (i) indicates that $j$ can send message to $i$ \emph{only if} $(i, j)$ is an edge of  $\cG$; conditions (ii) and (iii) are essential for achieving consensus and can hold if $\cG$ is connected; condition (iv) is just for convenience, and our analysis can still go through as long as $\|\vW-\vI\|_2$ is bounded. 

The updates in Algorithm~\ref{alg:dgdm} are from the point view of each agent. It is easy to have the following more compact matrix format, and we will use it in our analysis for convenience and better readability: 
\begin{subequations}\label{eq:alg-update}
\begin{align}
& \vx^{t+1}_i  = \prox_{\eta_x g}(\tilde\vx_i^t - \eta_x \vv_i^t), \forall\, i\in [m], \text{ with }\widetilde\vX^t := \vW\vX^t \label{eq:alg-update-x}\\
& \vLam^{t+1} = \textstyle \vLam^t + \frac{L\eta_\lambda}{2\sqrt{m}} (\vW-\vI)\vY^t, \label{eq:alg-update-lam-v} \quad
 \vV^{t+1} = \vW\vV^{t} + \nabla_\vx F(\vX^{t+1}, \vY^{t+1}) - \nabla_\vx F(\vX^{t}, \vY^{t}). 
\end{align}
\end{subequations}

\noindent\textbf{Roadmap of analysis:} we will first show some important properties of $P$ and $S_\Phi$ and then build a key inequality after one iteration of updates based on the change of $\phi$. Moreover, we bound the consensus error of $\vX$ and $\vV$ iterates by using the contraction of $\vW-\frac{1}{m}\vone\vone^\top$. Finally, we combine the consensus error bounds with the one-iteration progress to show a square-summable result for the generated $\vX$ and $\vLam$ iterates, from which we will be able to show the global convergence and establish iteration complexity results. 

\subsection{Preparatory results}
The lemma below shows some important properties of $P$ and $S_\Phi$.
\begin{proposition}[Smoothness of $P$ and Lipschitz continuity of $S_\Phi$]\label{prop:P-S}
Let $P$ and $S_\Phi$ be defined in \eqref{eq:def-p} and \eqref{eq:def-P}. Then $P$ is $L_P$-smooth with $L_P=L\sqrt{4\kappa^2 + 1}$ and $S_\Phi$ is Lipschitz continuous. More precisely,   
\begin{subequations}
\begin{align}
&\|\nabla P(\vx, \vLam) - \nabla P(\tilde\vx, \tilde\vLam)\|_F^2 \le L_P^2\left(\|\vx-\tilde\vx\|^2 + \|\vLam - \tilde\vLam\|_F^2\right),\forall\, \vx,\tilde\vx\in \dom(g); \, \forall\,\vLam, \tilde\vLam, \label{eq:smooth-P} \\
\label{eq:lip-S}
&\|S_\Phi(\vX,\vLam) - S_\Phi(\widetilde\vX,\vLam)\|_F^2 \le \kappa^2 \|\vX - \widetilde\vX\|_F^2, \forall\, \vX, \widetilde\vX\in\dom(g)^m;\ \forall\, \vLam,\\
&\|S_\Phi(\vX,\vLam) - S_\Phi(\widetilde\vX,\tilde\vLam)\|_F^2 \le 2\kappa^2 \|\vX - \widetilde\vX\|_F^2 + 2m\kappa^2\|\vLam - \tilde\vLam\|_F^2, \forall\, \vX, \widetilde\vX\in\dom(g)^m;\ \forall\, \vLam, \tilde\vLam. \label{eq:lip-S2}
\end{align}
\end{subequations}
\end{proposition}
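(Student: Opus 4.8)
The plan is to establish the three estimates in two stages: first the Lipschitz continuity of the argmax map $S_\Phi$ (inequalities \eqref{eq:lip-S}--\eqref{eq:lip-S2}), and then deduce the smoothness of $P$ (inequality \eqref{eq:smooth-P}) by combining a Danskin-type gradient formula with those Lipschitz bounds. The starting observation is that for fixed $(\vX,\vLam)$ the map $\vY\mapsto\Phi(\vX,\vLam,\vY)$ is strongly concave: the term $\frac1m\sum_i f_i(\vx_i,\cdot)$ is $\frac\mu m$-strongly concave by Assumption~\ref{assump-func}, $-\frac1m\sum_i h(\cdot)$ is concave, and the coupling term $-\frac{L}{2\sqrt m}\langle\vLam,(\vW-\vI)\vY\rangle$ is linear in $\vY$. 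Hence the maximizer $\vY^*=S_\Phi(\vX,\vLam)$ is unique (so $S_\Phi$ is single-valued), and its rows satisfy the first-order optimality condition, which after multiplying by $m$ reads
\[
\nabla_\vy f_i(\vx_i,\vy_i^*) - \tfrac{L\sqrt m}{2}\big((\vW-\vI)^\top\vLam\big)_i \in \partial h(\vy_i^*),\quad i\in[m].
\]
Moreover, since $\Phi(\vone\vx^\top,\vLam,\vY)$ is smooth in $(\vx,\vLam)$ for each fixed $\vY$ (the nonsmooth $h$ enters only through $\vY$) and the maximizer is unique, Danskin's theorem applies and gives the gradient formula $\nabla_\vx P(\vx,\vLam)=\frac1m\sum_i\nabla_\vx f_i(\vx,\vy_i^*)$ and $\nabla_\vLam P(\vx,\vLam)=-\frac{L}{2\sqrt m}(\vW-\vI)\vY^*$, with $\vY^*=S_\Phi(\vone\vx^\top,\vLam)$.

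For the Lipschitz bounds I would write the optimality system at two points $\vY^*=S_\Phi(\vX,\vLam)$ and $\widetilde\vY^*=S_\Phi(\widetilde\vX,\tilde\vLam)$, subtract them, and take the Frobenius inner product with $\vY^*-\widetilde\vY^*$. The subgradient terms of $h$ contribute a nonnegative quantity by monotonicity of $\partial h$, so they may be dropped to yield an inequality. On the remaining side I split $\nabla_\vy F(\vX,\vY^*)-\nabla_\vy F(\widetilde\vX,\widetilde\vY^*)$ into a pure-$\vY$ difference, controlled by $-\mu\|\vY^*-\widetilde\vY^*\|_F^2$ via strong concavity, and a pure-$\vX$ difference, controlled by $L\|\vX-\widetilde\vX\|_F\|\vY^*-\widetilde\vY^*\|_F$ via $L$-smoothness and Cauchy--Schwarz. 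When $\tilde\vLam=\vLam$ the coupling term cancels and rearranging gives $\|\vY^*-\widetilde\vY^*\|_F\le\kappa\|\vX-\widetilde\vX\|_F$, i.e.\ \eqref{eq:lip-S}. When $\tilde\vLam\neq\vLam$, the surviving coupling term is bounded from below by $-L\sqrt m\,\|\vLam-\tilde\vLam\|_F\|\vY^*-\widetilde\vY^*\|_F$ using $\|\vW-\vI\|_2\le2$ from Assumption~\ref{assump-W}(iv); dividing through by $\|\vY^*-\widetilde\vY^*\|_F$ gives $\mu\|\vY^*-\widetilde\vY^*\|_F\le L\|\vX-\widetilde\vX\|_F+L\sqrt m\,\|\vLam-\tilde\vLam\|_F$, and squaring with $(a+b)^2\le2a^2+2b^2$ yields \eqref{eq:lip-S2}.

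For \eqref{eq:smooth-P} I would feed the Lipschitz bounds into the Danskin gradient formula. Bounding the $\vx$-block by $L$-smoothness of each $f_i$ together with Jensen's inequality gives $\|\nabla_\vx P(\vx,\vLam)-\nabla_\vx P(\tilde\vx,\tilde\vLam)\|^2\le L^2\|\vx-\tilde\vx\|^2+\frac{L^2}{m}\|\vY^*-\widetilde\vY^*\|_F^2$, while the $\vLam$-block gives $\|\nabla_\vLam P(\vx,\vLam)-\nabla_\vLam P(\tilde\vx,\tilde\vLam)\|_F^2\le\frac{L^2}{m}\|\vY^*-\widetilde\vY^*\|_F^2$, again via $\|\vW-\vI\|_2\le2$. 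Summing and substituting \eqref{eq:lip-S2} with $\vX=\vone\vx^\top$, where $\|\vone\vx^\top-\vone\tilde\vx^\top\|_F^2=m\|\vx-\tilde\vx\|^2$, cancels the $\frac1m$ factors and collapses everything into $L^2(1+4\kappa^2)\|\vx-\tilde\vx\|^2+4L^2\kappa^2\|\vLam-\tilde\vLam\|_F^2$; since $4L^2\kappa^2\le L^2(1+4\kappa^2)=L_P^2$, this delivers \eqref{eq:smooth-P}. The main obstacle I anticipate is the constant bookkeeping — in particular tracking the $\sqrt m$ factors introduced by the consensus embedding $\vx\mapsto\vone\vx^\top$ against the $\frac1m$ averaging in $\Phi$, so that the dual-coupling contribution lands exactly at the coefficient $4\kappa^2$ rather than a looser multiple — rather than any conceptual difficulty.
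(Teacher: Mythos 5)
Your proposal is correct and follows essentially the same route as the paper's proof: subtract the first-order optimality systems of the two maximizers, use monotonicity of $\partial h$ and $\mu$-strong concavity of each $f_i(\vx_i,\cdot)$ against the $L$-smoothness and the coupling term (with $\|\vW-\vI\|_2\le 2$) to get \eqref{eq:lip-S} and \eqref{eq:lip-S2}, then combine Danskin's gradient formula with \eqref{eq:lip-S2} at $\vX=\vone\vx^\top$ to obtain \eqref{eq:smooth-P} with $L_P^2=L^2(1+4\kappa^2)$. The only difference is cosmetic — you use Cauchy--Schwarz and divide by $\|\vY^*-\widetilde\vY^*\|_F$ before squaring, where the paper applies Young's inequality with tuned parameters ($a=2\mu$ or $a=\mu$) — and both yield exactly the same constants.
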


\begin{proof}
For any $(\vX,\vLam)$ and $(\widetilde\vX, \tilde\vLam)$ with $\vX, \widetilde\vX\in \dom(g)^m$, let $\vY = S_\Phi(\vX,\vLam)$  and $\widetilde\vY = S_\Phi(\widetilde\vX,\tilde\vLam)$. From the optimality condition, it holds that there exists $\vzeta_i\in\partial h(\vy_i), \forall\, i\in [m]$ such that
\begin{equation}\label{eq:opt-S-Phi}
\textstyle \frac{1}{m} \Big[ \nabla_\vy f_1(\vx_1,\vy_1) - \vzeta_1,  
\cdots,
\nabla_\vy f_m(\vx_m,\vy_m) - \vzeta_m
\Big]^\top - \frac{L}{2\sqrt{m}} (\vW- \vI)^\top \vLam = \vzero.
\end{equation}
Hence,
$\frac{1}{m} \sum_{i=1}^m\big\langle \tilde\vy_i - \vy_i, \nabla_\vy f_i(\vx_i,\vy_i) - \vzeta_i \big\rangle - \frac{L}{2\sqrt{m}} \big\langle \widetilde\vY - \vY, (\vW- \vI)^\top \vLam\big\rangle = 0.$
Similarly, there exists $\tilde\vzeta_i\in\partial h(\tilde\vy_i)$ for each $i$ such that
$\frac{1}{m} \sum_{i=1}^m\big\langle \vy_i - \tilde \vy_i, \nabla_\vy f_i(\tilde\vx_i,\tilde\vy_i) - \tilde\vzeta_i \big\rangle - \frac{L}{2\sqrt{m}} \big\langle \vY - \widetilde\vY, (\vW- \vI)^\top \tilde\vLam\big\rangle = 0.$
Adding the two equalities together, we obtain 
\begin{equation}\label{eq:add-y-opt}
\textstyle \frac{1}{m} \sum_{i=1}^m\big\langle \tilde\vy_i - \vy_i, \nabla_\vy f_i(\vx_i,\vy_i) - \nabla_\vy f_i(\tilde\vx_i,\tilde\vy_i) + \tilde\vzeta_i - \vzeta_i \big\rangle + \frac{L}{2\sqrt{m}} \big\langle \widetilde\vY - \vY, (\vW- \vI)^\top (\tilde\vLam-\vLam)\big\rangle = 0.
\end{equation}
The $\mu$-strong concavity of each $f_i(\vx_i, \cdot)$ implies 
$\textstyle \frac{1}{m} \sum_{i=1}^m\big\langle \tilde\vy_i - \vy_i, \nabla_\vy f_i(\vx_i,\vy_i) - \nabla_\vy f_i(\vx_i,\tilde\vy_i) \big\rangle \ge \frac{\mu}{m} \sum_{i=1}^m \|\tilde\vy_i - \vy_i\|^2.$
Also, from the convexity of $h$, it follows
$\frac{1}{m} \sum_{i=1}^m\big\langle \tilde\vy_i - \vy_i, \tilde\vzeta_i - \vzeta_i \big\rangle \ge 0.$
Substitute these two inequalities into \eqref{eq:add-y-opt} to have
\begin{equation}\label{eq:add-y-opt2}
\textstyle \frac{1}{m} \sum_{i=1}^m\big\langle \tilde\vy_i - \vy_i, \nabla_\vy f_i(\vx_i,\tilde\vy_i) - \nabla_\vy f_i(\tilde\vx_i,\tilde\vy_i) \big\rangle + \frac{L}{2\sqrt{m}} \big\langle \widetilde\vY - \vY, (\vW- \vI)^\top (\tilde\vLam-\vLam)\big\rangle \ge \frac{\mu}{m} \sum_{i=1}^m \|\tilde\vy_i - \vy_i\|^2.
\end{equation}

Moreover, by the smoothness of $f_i$, it holds $\|\nabla_\vy f_i(\vx_i,\tilde\vy_i) - \nabla_\vy f_i(\tilde\vx_i,\tilde\vy_i)\| \le L\|\vx_i - \tilde \vx_i\|$, and thus using the Young's inequality, we obtain $\big\langle \tilde\vy_i - \vy_i, \nabla_\vy f_i(\vx_i,\tilde\vy_i) - \nabla_\vy f_i(\tilde\vx_i,\tilde\vy_i) \big\rangle \le \frac{a}{4}\|\tilde\vy_i - \vy_i\|^2 + \frac{L^2}{a}\|\vx_i - \tilde \vx_i\|^2$ for any $a>0$. Hence, \eqref{eq:add-y-opt2} implies
\begin{equation}\label{eq:add-y-opt3}
\textstyle \frac{\mu}{m} \sum_{i=1}^m \|\tilde\vy_i - \vy_i\|^2 \le \frac{1}{m} \sum_{i=1}^m\left(\frac{a}{4}\|\tilde\vy_i - \vy_i\|^2 + \frac{L^2}{a}\|\vx_i - \tilde \vx_i\|^2 \right) +  \frac{L}{2\sqrt{m}} \big\langle\widetilde\vY - \vY, (\vW- \vI)^\top (\tilde\vLam-\vLam)\big\rangle. 
\end{equation}
Below we discuss two different cases to obtain the desired results.

\textbf{Case I.} When $\vLam = \tilde\vLam$, we take $a = 2\mu$ in \eqref{eq:add-y-opt3} and rearrange terms to have $\|\widetilde\vY - \vY\|_F^2 \le \frac{L^2}{\mu^2}\|\widetilde\vX - \vX\|_F^2$, which indicates \eqref{eq:lip-S} by the definition of $\kappa=\frac{L}{\mu}$ in \eqref{eq:kappa}. 

\textbf{Case II.} Generally, we first use the Young's inequality to have 
$$\textstyle \frac{L}{2\sqrt{m}} \big\langle\widetilde\vY - \vY, (\vW- \vI)^\top (\tilde\vLam-\vLam)\big\rangle \le \frac{\mu}{4m}\|\widetilde\vY - \vY\|_F^2 + \frac{L^2}{4\mu}\|(\vW- \vI)^\top (\tilde\vLam-\vLam)\|_F^2.$$
Then, we take $a=\mu$ in \eqref{eq:add-y-opt3} and obtain, by rearranging terms, that
$\frac{1}{2m}\|\widetilde\vY - \vY\|_F^2 \le \frac{L^2}{m\mu^2}\|\vX-\widetilde\vX\|_F^2 + \frac{L^2}{4\mu^2} \|(\vW- \vI)^\top (\tilde\vLam-\vLam)\|_F^2,$
which together with the fact $\|\vW- \vI\|_2\le 2$ gives
\begin{equation}\label{eq:add-y-opt4-1}
\textstyle \frac{1}{2m}\|\widetilde\vY - \vY\|_F^2 \le \frac{L^2}{m\mu^2}\|\vX-\widetilde\vX\|_F^2 + \frac{L^2}{\mu^2}\|\tilde\vLam-\vLam\|_F^2 = \frac{\kappa^2}{m}\|\vX-\widetilde\vX\|_F^2 + \kappa^2\|\tilde\vLam-\vLam\|_F^2.
\end{equation}
We thus obtain \eqref{eq:lip-S2} by applying $2m$ to both sides of \eqref{eq:add-y-opt4-1}.

Now with $\vX=\vone\vx^\top$ and $\widetilde\vX=\vone\tilde\vx^\top$, we notice $\nabla P(\vx, \vLam) = \left(\frac{1}{m}\sum_{i=1}^m \nabla_\vx f_i(\vx, \vy_i),\ -\frac{L}{2\sqrt{m}}(\vW-\vI)\vY\right)$ by the Danskin's Theorem \cite{danskin1966theory}.
Hence,
\begin{align*}
&\,\|\nabla P(\vx, \vLam) - \nabla P(\tilde\vx, \tilde\vLam)\|_F^2 =  \textstyle \frac{1}{m^2}\big\|\sum_{i=1}^m \nabla_\vx f_i(\vx, \vy_i) - \sum_{i=1}^m \nabla_\vx f_i(\tilde\vx, \tilde\vy_i)\big\|^2 + \frac{L^2}{4m}\big\|(\vW-\vI)(\vY -\widetilde\vY)\big\|_F^2\cr
\le &\, \textstyle \frac{L^2}{m}\sum_{i=1}^m\big(\|\vx-\tilde\vx\|^2 + \|\vy_i-\tilde\vy_i\|^2\big) + \frac{L^2}{m}\|\vY -\widetilde\vY\|_F^2
\overset{\eqref{eq:add-y-opt4-1}}\le  L^2 \|\vx-\tilde\vx\|^2 + 4L^2\kappa^2 \left(\|\vx-\tilde\vx\|^2 + \|\tilde\vLam-\vLam\|_F^2\right),
\end{align*}
which implies \eqref{eq:smooth-P}.
 This completes the proof.
\end{proof}

The proposition below bounds the approximate maximizer $\vY^t$ to the exact one.
\begin{proposition}\label{prop:error-bd-vYt}
Let $\{\vY^t\}$ be generated from Alg.~\ref{alg:dgdm}. Then $\big\|\vY^t - S_\Phi(\vX^t, \vLam^t)\big\|_F^2 \le \frac{m\delta_t^2}{\mu^2}$ for any $t\ge 0$.
\end{proposition}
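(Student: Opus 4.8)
The plan is to compare the approximate maximizer $\vY^t$ with the exact maximizer $S_\Phi(\vX^t,\vLam^t)$ by exploiting the strong concavity of the objective in the $\vy$-subproblems. The key observation is that $\Phi(\vX^t,\vLam^t,\cdot)$ decomposes across agents: for each $i$, the $\vy_i$-part of $\Phi(\vX^t,\vLam^t,\vY)$ (up to the scaling by $\tfrac{1}{m}$ and after absorbing the multiplier term into $d_i^t$) is exactly the function $d_i^t$ defined in \eqref{eq:def-d-i}. I would first verify that maximizing $\Phi(\vX^t,\vLam^t,\cdot)$ over $\vY$ is equivalent to separately maximizing each $d_i^t$ over $\vy_i$, so that the $i$-th row of $S_\Phi(\vX^t,\vLam^t)$ is the unique maximizer of $d_i^t$. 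The care here is matching the multiplier term: in \eqref{eq:def-Phi} the coupling is $-\frac{L}{2\sqrt m}\langle\vLam,(\vW-\vI)\vY\rangle$, and one checks that its gradient in $\vy_i$ matches the term $-\frac{L\sqrt m}{2}\tilde\vlam_i^t$ appearing in $d_i^t$, where $\tilde\vlam_i^t=\sum_j w_{ji}\vlam_j^t-\vlam_i^t$ is precisely the $i$-th component of $(\vW-\vI)^\top\vLam^t$.

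Next I would use strong concavity of each subproblem. Since $f_i(\vx_i^t,\cdot)$ is $\mu$-strongly concave and $-h$ is concave while the linear multiplier term does not affect curvature, $d_i^t$ is $\mu$-strongly concave, so $-d_i^t$ is $\mu$-strongly convex. A standard property of $\mu$-strongly convex functions is the quadratic growth / error bound relating the distance to the minimizer and the (sub)gradient norm: if $\vy_i^\star$ minimizes $-d_i^t$ and $\vg\in\partial(-d_i^t)(\vy_i^{t+1})$, then $\mu\|\vy_i^{t+1}-\vy_i^\star\|\le\|\vg\|$. Concretely, by strong monotonicity of the subdifferential of a $\mu$-strongly convex function together with the fact that $\vzero\in\partial(-d_i^t)(\vy_i^\star)$, one gets $\langle\vg-\vzero,\vy_i^{t+1}-\vy_i^\star\rangle\ge\mu\|\vy_i^{t+1}-\vy_i^\star\|^2$, and Cauchy–Schwarz then yields $\|\vy_i^{t+1}-\vy_i^\star\|\le\frac{1}{\mu}\|\vg\|$.

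Now I would apply the stopping criterion from Algorithm~\ref{alg:dgdm}, namely $\dist(\vzero,\partial d_i^t(\vy_i^t))\le\delta_t$, which says exactly that there exists a subgradient $\vg\in\partial(-d_i^t)(\vy_i^t)$ with $\|\vg\|\le\delta_t$. Combining with the error bound gives $\|\vy_i^t-[S_\Phi(\vX^t,\vLam^t)]_i\|\le\frac{\delta_t}{\mu}$ for each $i$, where $[S_\Phi(\vX^t,\vLam^t)]_i$ denotes the $i$-th row (the maximizer of $d_i^t$). Squaring and summing over the $m$ agents then produces
\begin{equation*}
\|\vY^t-S_\Phi(\vX^t,\vLam^t)\|_F^2=\sum_{i=1}^m\|\vy_i^t-[S_\Phi(\vX^t,\vLam^t)]_i\|^2\le\sum_{i=1}^m\frac{\delta_t^2}{\mu^2}=\frac{m\delta_t^2}{\mu^2},
\end{equation*}
which is the claimed bound.

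The main obstacle, though it is more a bookkeeping matter than a deep difficulty, is the first step: correctly identifying that the separable subproblems $\{d_i^t\}$ really do reproduce the joint maximizer $S_\Phi$, in particular tracking the $\sqrt m$ and $\tfrac{1}{m}$ scalings and confirming that the linear coupling term in $\Phi$ restricts on the $i$-th block to the linear term $-\frac{L\sqrt m}{2}\langle\tilde\vlam_i^t,\vy\rangle$ in $d_i^t$. Once the decomposition is pinned down, the strong-convexity error bound and the summation are routine.
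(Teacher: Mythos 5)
Your proposal is correct and follows essentially the same route as the paper's proof: identify the rows of $S_\Phi(\vX^t,\vLam^t)$ as the maximizers of the separable, $\mu$-strongly concave functions $d_i^t$, apply the strong-monotonicity/Cauchy--Schwarz error bound with the stopping criterion $\dist\big(\vzero,\partial d_i^t(\vy_i^t)\big)\le\delta_t$ to get $\|\vy_i^t-\vy_i^{t\star}\|\le\delta_t/\mu$ per agent, and sum over $i\in[m]$. The only difference is that you explicitly verify the scaling bookkeeping showing $\Phi(\vX^t,\vLam^t,\vY)=\frac{1}{m}\sum_i d_i^t(\vy_i)$, which the paper asserts without comment; this is a harmless (and sound) elaboration, not a different argument.
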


\begin{proof}
Let ${\vY}^{t\star}= S_\Phi(\vX^t, \vLam^t)$, i.e., $\vy^{t\star}_i = \argmax_{\vy_i} d_i^t(\vy_i)$ for each $i\in [m]$. Then by the $\mu$-strong concavity of $d_i^t$, it follows that $\mu\|\vy^{t\star}_i - \vy_i^t\|^2 \le \big\langle \vy^{t\star}_i - \vy_i^t, \vxi_i^t\big\rangle$ for any $\vxi_i^t \in \partial d_i^t(\vy_i^t)$. Hence using the Cauchy-Schwarz inequality and by the condition $\dist\big(\vzero, \partial d_i^t(\vy_i^t)\big) \le \delta_t$, we obtain  $\mu\|\vy^{t\star}_i - \vy_i^t\|^2 \le \delta_t \|\vy^{t\star}_i - \vy_i^t\|$ and thus $\|\vy^{t\star}_i - \vy_i^t\|\le \frac{\delta_t}{\mu}$ for each $i\in [m]$. This implies the desired result.
\end{proof}

\subsection{Global convergence and convergence rate results}
We first establish a key one-iteration progress inequality by using the following two lemmas.
\begin{lemma}[{\cite[Lemma~C.3]{mancino2022proximal}}]\label{lem:g-step}
It holds that for any $t\ge0$ and each $i\in [m]$,
\begin{equation}\label{eq:g-step}
\textstyle g(\vx_i^{t+1}) - g(\vx_\avg^t) + \big\langle \vx_i^{t+1} - \vx_\avg^t, \vv_i^t\big\rangle \le -\frac{1}{2\eta_x}\left(\|\vx_i^{t+1} - \vx_\avg^t\|^2 + \|\vx_i^{t+1} - \tilde\vx_i^t\|^2 - \|\tilde\vx_i^t- \vx_\avg^t\|^2\right).
\end{equation}
\end{lemma}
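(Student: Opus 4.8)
The plan is to derive \eqref{eq:g-step} directly from the optimality condition of the proximal update \eqref{eq:alg-update-x}. First I would rewrite $\vx_i^{t+1} = \prox_{\eta_x g}(\tilde\vx_i^t - \eta_x \vv_i^t)$ as the unique minimizer of the strongly convex function $\vx \mapsto g(\vx) + \frac{1}{2\eta_x}\|\vx - \tilde\vx_i^t + \eta_x \vv_i^t\|^2$. Its first-order optimality condition guarantees the existence of a subgradient $\vs_i^t \in \partial g(\vx_i^{t+1})$ with $\vs_i^t + \frac{1}{\eta_x}(\vx_i^{t+1} - \tilde\vx_i^t) + \vv_i^t = \vzero$, equivalently $\vs_i^t = -\frac{1}{\eta_x}(\vx_i^{t+1} - \tilde\vx_i^t) - \vv_i^t$. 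This single identity carries all the algorithmic content; the rest is convexity plus an algebraic identity.

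Second, I would invoke the convexity of $g$ at the two points $\vx_i^{t+1}$ and $\vx_\avg^t$, which gives the subgradient inequality $g(\vx_\avg^t) \ge g(\vx_i^{t+1}) + \big\langle \vs_i^t, \vx_\avg^t - \vx_i^{t+1}\big\rangle$, i.e. $g(\vx_i^{t+1}) - g(\vx_\avg^t) \le \big\langle \vs_i^t, \vx_i^{t+1} - \vx_\avg^t\big\rangle$. Substituting the expression for $\vs_i^t$ from the first step and transferring the $\vv_i^t$-term to the left-hand side yields
\[
g(\vx_i^{t+1}) - g(\vx_\avg^t) + \big\langle \vx_i^{t+1} - \vx_\avg^t, \vv_i^t\big\rangle \le -\frac{1}{\eta_x}\big\langle \vx_i^{t+1} - \tilde\vx_i^t, \vx_i^{t+1} - \vx_\avg^t\big\rangle.
\]

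Third, I would apply the three-point (polarization) identity $\langle \va - \vb, \va - \vc\rangle = \frac{1}{2}\big(\|\va - \vb\|^2 + \|\va - \vc\|^2 - \|\vb - \vc\|^2\big)$ with $\va = \vx_i^{t+1}$, $\vb = \tilde\vx_i^t$, $\vc = \vx_\avg^t$. This rewrites the right-hand side above as exactly $-\frac{1}{2\eta_x}\big(\|\vx_i^{t+1} - \tilde\vx_i^t\|^2 + \|\vx_i^{t+1} - \vx_\avg^t\|^2 - \|\tilde\vx_i^t - \vx_\avg^t\|^2\big)$, which matches the claimed bound \eqref{eq:g-step} (the two norm-squared terms appearing in either order). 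The argument holds for every $i \in [m]$ and every $t \ge 0$ since the prox step is taken at each agent and iteration.

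I do not expect a substantial obstacle here: this is a standard descent-lemma computation for a proximal step, and the only points requiring care are (i) correctly extracting the subgradient from the prox optimality condition with the right sign and scaling by $\eta_x$, and (ii) matching the three-point identity's sign convention so that the $-\frac{1}{2\eta_x}$ factor and the subtracted $\|\tilde\vx_i^t - \vx_\avg^t\|^2$ term come out exactly as stated rather than with flipped signs. No properties of $f_i$, the mixing matrix $\vW$, or strong concavity are needed, so the lemma holds purely from convexity of $g$ and the definition of the proximal map.
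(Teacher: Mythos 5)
Your proof is correct: the prox optimality condition, the subgradient inequality for the convex function $g$, and the three-point identity combine exactly as you describe to give \eqref{eq:g-step}, and no properties of $f_i$ or $\vW$ are needed. Note that the paper itself offers no proof of this lemma---it imports it by citation from \cite[Lemma~C.3]{mancino2022proximal}---and your self-contained argument is precisely the standard derivation that the cited result rests on, so there is no discrepancy to reconcile.
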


\begin{lemma}\label{lem:one-step}
For any $t\ge0$, it holds with $L_P=L\sqrt{4\kappa^2 + 1}$ that
\begin{equation}\label{eq:split-P-v}
\begin{aligned}
&~\textstyle \big\langle \nabla P(\vx_\avg^{t}, \vLam^{t}), (\vx_\avg^{t+1}-\vx_\avg^{t}, \vLam^{t+1}-\vLam^{t}) \big\rangle - \frac{1}{m}\sum_{i=1}^m \big\langle \vx_i^{t+1} - \vx_\avg^t, \vv_i^t\big\rangle\\
\le  &~ \textstyle \frac{1}{2m} \big(L_P\|\vX^{t+1} - \vX_\avg^t\|_F^2 + \frac{1}{L_P}\|\vV_\perp^t\|_F^2\big)  + \frac{L}{2m} \big((2\kappa+1)\|\vX_\perp^{t}\|_F^2+(\kappa+1)\|\vX_\avg^{t+1}-\vX_\avg^{t}\|_F^2\big) + \frac{2\delta_t^2}{\mu} \\
& ~ \textstyle + \frac{L\kappa}{m}\|\vX_\perp^t\|_F^2 - \big(\frac{1}{\eta_\lambda}-\frac{L\kappa}{2}\big)\|\vLam^{t+1}-\vLam^{t}\|_F^2.
\end{aligned}
\end{equation}
\end{lemma}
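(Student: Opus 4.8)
The plan is to split the left-hand side into its $\vx$-component and its $\vLam$-component and bound each separately. First I would record two structural facts. By Danskin's theorem (exactly as invoked in the proof of Proposition~\ref{prop:P-S}), writing $\hat\vY^t := S_\Phi(\vX_\avg^t, \vLam^t)$ for the exact maximizer at the averaged primal point $\vX_\avg^t = \vone(\vx_\avg^t)^\top$, we have $\nabla_\vx P(\vx_\avg^t,\vLam^t)=\frac1m\sum_{i=1}^m \nabla_\vx f_i(\vx_\avg^t,\hat\vy_i^t)$ and $\nabla_\vLam P(\vx_\avg^t,\vLam^t)=-\frac{L}{2\sqrt m}(\vW-\vI)\hat\vY^t$. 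Second, from the gradient-tracking update in \eqref{eq:alg-update} together with $\vW^\top\vone=\vone$, an easy induction yields the tracking invariant $\bar\vv^t:=\frac1m\sum_{i=1}^m\vv_i^t=\frac1m\sum_{i=1}^m\nabla_\vx f_i(\vx_i^t,\vy_i^t)$, so that $\vv_i^t-\bar\vv^t$ is precisely the $i$-th row of $\vV_\perp^t$.

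For the $\vx$-part I would add and subtract $\bar\vv^t$. Using $\frac1m\sum_i(\vx_i^{t+1}-\vx_\avg^t)=\vx_\avg^{t+1}-\vx_\avg^t$, the expression reorganizes into $\langle \nabla_\vx P(\vx_\avg^t,\vLam^t)-\bar\vv^t,\ \vx_\avg^{t+1}-\vx_\avg^t\rangle - \frac1m\sum_i\langle \vx_i^{t+1}-\vx_\avg^t,\ \vv_i^t-\bar\vv^t\rangle$. The second sum is dispatched by Young's inequality with weight $L_P$, producing exactly $\frac{1}{2m}\big(L_P\|\vX^{t+1}-\vX_\avg^t\|_F^2+\frac{1}{L_P}\|\vV_\perp^t\|_F^2\big)$. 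For the first inner product I would bound $\nabla_\vx P(\vx_\avg^t,\vLam^t)-\bar\vv^t=\frac1m\sum_i\big(\nabla_\vx f_i(\vx_\avg^t,\hat\vy_i^t)-\nabla_\vx f_i(\vx_i^t,\vy_i^t)\big)$ termwise by $L\big(\|\vx_\avg^t-\vx_i^t\|+\|\hat\vy_i^t-\vy_i^t\|\big)$ using $L$-smoothness, split off the two factors, and apply Young's inequality to each: with equal weights on the $\|\vx_\avg^t-\vx_i^t\|$ factor (contributing $\frac{L}{2m}\|\vX_\perp^t\|_F^2$ and $\frac{L}{2m}\|\vX_\avg^{t+1}-\vX_\avg^t\|_F^2$), and with weight $\kappa$ on the $\|\hat\vy_i^t-\vy_i^t\|$ factor (contributing $\frac{L\kappa}{2m}\|\vX_\avg^{t+1}-\vX_\avg^t\|_F^2$ and $\frac{L}{2m\kappa}\|\hat\vY^t-\vY^t\|_F^2$).

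The estimate I rely on repeatedly is the triangle split $\|\hat\vY^t-\vY^t\|_F\le\|\hat\vY^t-S_\Phi(\vX^t,\vLam^t)\|_F+\|S_\Phi(\vX^t,\vLam^t)-\vY^t\|_F$, where the first term is at most $\kappa\|\vX_\perp^t\|_F$ by \eqref{eq:lip-S} (with $\widetilde\vX=\vX_\avg^t$) and the second at most $\frac{\sqrt m\,\delta_t}{\mu}$ by Proposition~\ref{prop:error-bd-vYt}; squaring with $(a+b)^2\le2a^2+2b^2$ gives $\|\hat\vY^t-\vY^t\|_F^2\le 2\kappa^2\|\vX_\perp^t\|_F^2+\frac{2m\delta_t^2}{\mu^2}$. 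Feeding this into $\frac{L}{2m\kappa}\|\hat\vY^t-\vY^t\|_F^2$ and using $\frac{L}{\kappa\mu^2}=\frac1\mu$ yields $\frac{L\kappa}{m}\|\vX_\perp^t\|_F^2+\frac{\delta_t^2}{\mu}$, which combines with the earlier $\frac{L}{2m}\|\vX_\perp^t\|_F^2$ to give the coefficient $\frac{L}{2m}(2\kappa+1)$ and accounts for one half of $\frac{2\delta_t^2}{\mu}$, while the $\|\vX_\avg^{t+1}-\vX_\avg^t\|_F^2$ contributions sum to $\frac{L}{2m}(\kappa+1)$.

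For the $\vLam$-part I would use the update \eqref{eq:alg-update-lam-v} to write $(\vW-\vI)\vY^t=\frac{2\sqrt m}{L\eta_\lambda}(\vLam^{t+1}-\vLam^t)$ and split $\hat\vY^t=\vY^t+(\hat\vY^t-\vY^t)$ inside $\langle-\frac{L}{2\sqrt m}(\vW-\vI)\hat\vY^t,\ \vLam^{t+1}-\vLam^t\rangle$. The $\vY^t$ piece collapses exactly to $-\frac1{\eta_\lambda}\|\vLam^{t+1}-\vLam^t\|_F^2$; the remainder is controlled by Cauchy–Schwarz, $\|\vW-\vI\|_2\le2$, and Young's inequality with weight $L\kappa$, giving $\frac{L\kappa}{2}\|\vLam^{t+1}-\vLam^t\|_F^2+\frac{L}{2m\kappa}\|\hat\vY^t-\vY^t\|_F^2$, and the last term again reduces to $\frac{L\kappa}{m}\|\vX_\perp^t\|_F^2+\frac{\delta_t^2}{\mu}$ by the same estimate. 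Adding the two parts produces the stated inequality, with $-\frac1{\eta_\lambda}+\frac{L\kappa}{2}=-\big(\frac1{\eta_\lambda}-\frac{L\kappa}{2}\big)$ and the second half of $\frac{2\delta_t^2}{\mu}$. I expect the main obstacle to be the careful bookkeeping of the three reference points for the dual block — the iterate $\vY^t$, the exact maximizer $S_\Phi(\vX^t,\vLam^t)$ at the local primal, and the exact maximizer $\hat\vY^t$ at the averaged primal — together with choosing the Young's-inequality weights ($L_P$, $1$, $\kappa$, $L\kappa$) so that the constants coalesce precisely into $L_P$, $2\kappa+1$, $\kappa+1$, $L\kappa$, and $\frac{2\delta_t^2}{\mu}$.
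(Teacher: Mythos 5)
Your proposal is correct and follows essentially the same route as the paper's proof: Danskin's theorem plus the gradient-tracking invariant $\vv_\avg^t=\frac1m\sum_i\nabla_\vx f_i(\vx_i^t,\vy_i^t)$, Young's inequality with the same effective weights ($L_P$ for the consensus term, $1$ and $\kappa$ for the $\vx$- and $\vy$-gradient discrepancies, and a weight yielding $\frac{L\kappa}{2}$ on the dual increment), and the same key estimate $\|\widehat\vY^t-\vY^t\|_F^2\le 2\kappa^2\|\vX_\perp^t\|_F^2+\frac{2m\delta_t^2}{\mu^2}$ combining \eqref{eq:lip-S} with Proposition~\ref{prop:error-bd-vYt}. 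The only differences are cosmetic (you bound $\nabla_\vx f_i(\vx_\avg^t,\hat\vy_i^t)-\nabla_\vx f_i(\vx_i^t,\vy_i^t)$ directly by joint $L$-smoothness rather than via the paper's intermediate point $(\vx_i^t,\hat\vy_i^t)$, and you apply $\|\vW-\vI\|_2\le2$ before Young's inequality rather than after), and all constants coalesce exactly as in \eqref{eq:split-P-v}.
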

 
\begin{proof}
Let $\widehat\vY^t:= S_\Phi\big(\vone(\vx_\avg^t)^\top, \vLam^t\big)$. Then from the Danskin's theorem \cite{danskin1966theory}, it holds
\begin{equation}\label{eq:grad-P}
\textstyle \nabla P(\vx_\avg^{t}, \vLam^{t}) = \left(\frac{1}{m}\sum_{i=1}^m \nabla_\vx f_i(\vx_\avg^{t}, \hat\vy_i^t),\ -\frac{L}{2\sqrt{m}}(\vW-\vI)\widehat\vY^t\right).
\end{equation}
Thus we have 
\begin{equation}\label{eq:cross-gradP}
\begin{aligned}
&~\textstyle \left\langle \nabla P(\vx_\avg^{t}, \vLam^{t}), (\vx_\avg^{t+1}-\vx_\avg^{t}, \vLam^{t+1}-\vLam^{t})\right\rangle \\
= & ~ \textstyle \big\langle \frac{1}{m}\sum_{i=1}^m \nabla_\vx f_i(\vx_\avg^{t}, \hat\vy_i^t),  \vx_\avg^{t+1}-\vx_\avg^{t}\big\rangle - \frac{L}{2\sqrt{m}}\big\langle(\vW-\vI)\widehat\vY^t, \vLam^{t+1}-\vLam^{t} \big\rangle.
\end{aligned}
\end{equation}
For the first inner product term on the right hand side of \eqref{eq:cross-gradP}, we split it as follows:
\begin{equation}\label{eq:split}
\begin{aligned}
\textstyle \left\langle \frac{1}{m}\sum_{i=1}^m \nabla_\vx f_i(\vx_\avg^{t}, \hat\vy_i^t),  \vx_\avg^{t+1}-\vx_\avg^{t}\right\rangle = &~ \textstyle \frac{1}{m}\sum_{i=1}^m \big\langle  \nabla_\vx f_i(\vx_i^{t}, \vy_i^t),  \vx_\avg^{t+1}-\vx_\avg^{t}\big\rangle \\
& \hspace{-2cm}+ \textstyle  \frac{1}{m}\sum_{i=1}^m \big\langle  \nabla_\vx f_i(\vx_\avg^{t}, \hat\vy_i^t) - \nabla_\vx f_i(\vx_i^{t}, \hat\vy_i^t),  \vx_\avg^{t+1}-\vx_\avg^{t}\big\rangle \\
& \hspace{-2cm} \textstyle + \frac{1}{m}\sum_{i=1}^m \big\langle \nabla_\vx f_i(\vx_i^{t}, \hat\vy_i^t) -  \nabla_\vx f_i(\vx_i^{t}, \vy_i^t),  \vx_\avg^{t+1}-\vx_\avg^{t}\big\rangle.
\end{aligned}
\end{equation}
By the smoothness of each $f_i$ and the Young's inequality, we have
\begin{equation}\label{eq:split1}
\begin{aligned}
&\textstyle  \frac{1}{m}\sum_{i=1}^m \big\langle  \nabla_\vx f_i(\vx_\avg^{t}, \hat\vy_i^t) - \nabla_\vx f_i(\vx_i^{t}, \hat\vy_i^t),  \vx_\avg^{t+1}-\vx_\avg^{t}\big\rangle \\
\le &~ \textstyle \frac{L}{2m} \sum_{i=1}^m\big(\|\vx_\avg^{t} - \vx_i^t\|^2+\|\vx_\avg^{t+1}-\vx_\avg^{t}\|^2\big) = \frac{L}{2m} \big(\|\vX_\perp^t\|_F^2 + \|\vX_\avg^{t+1} - \vX_\avg^t\|_F^2\big),
\end{aligned}
\end{equation}
where we have used the notation in \eqref{eq:avg-perp-not}.
Similarly, it holds
\begin{equation}\label{eq:split2}
\begin{aligned}
&\textstyle  \frac{1}{m}\sum_{i=1}^m \big\langle \nabla_\vx f_i(\vx_i^{t}, \hat\vy_i^t) - \nabla_\vx f_i(\vx_i^{t}, \vy_i^t),  \vx_\avg^{t+1}-\vx_\avg^{t}\big\rangle \\
\le &~ \textstyle \frac{L}{2m} \sum_{i=1}^m\big(\frac{1}{\kappa}\|\hat\vy_i^t - \vy_i^t\|^2+\kappa\|\vx_\avg^{t+1}-\vx_\avg^{t}\|^2\big) = \frac{L}{2m}\big(\frac{1}{\kappa}\|\widehat\vY^t - \vY^t\|_F^2+\kappa\|\vX_\avg^{t+1} - \vX_\avg^t\|_F^2\big).
\end{aligned}
\end{equation}
By the definition of $\widehat\vY^t$ and the choice of $\vY^t$, we have from Propositions~\ref{prop:P-S} and \ref{prop:error-bd-vYt} that 
\begin{equation}\label{eq:rel-y-x}
\begin{aligned}
\|\widehat\vY^t - \vY^t\|_F^2 \le & \, 2\|\widehat\vY^t - S_\Phi(\vX^t,  \vLam^t)\|_F^2 + 2 \|S_\Phi(\vX^t,  \vLam^t)-\vY^t\|_F^2 
\le  \textstyle 
2\kappa^2 \|\vX_\perp^t\|_F^2 + \frac{2m\delta_t^2}{\mu^2},
\end{aligned}
\end{equation}
 and thus \eqref{eq:split2}, together with the definition of $\kappa=\frac{L}{\mu}$, implies
\begin{equation}\label{eq:split3}
\begin{aligned}
&\textstyle  \frac{1}{m}\sum_{i=1}^m  \big\langle\nabla_\vx f_i(\vx_i^{t}, \hat\vy_i^t) - \nabla_\vx f_i(\vx_i^{t}, \vy_i^t),  \vx_\avg^{t+1}-\vx_\avg^{t}\big\rangle 
\le  \textstyle \frac{L}{2m} \big(2\kappa\|\vX_\perp^{t}\|_F^2+\kappa\|\vX_\avg^{t+1}-\vX_\avg^{t}\|_F^2\big) + \frac{\delta_t^2}{\mu}.
\end{aligned}
\end{equation}
Plugging \eqref{eq:split1} and \eqref{eq:split3} into \eqref{eq:split} gives
\begin{equation}\label{eq:split4}
\begin{aligned}
&\textstyle \big\langle \frac{1}{m}\sum_{i=1}^m \nabla_\vx f_i(\vx_\avg^{t}, \hat\vy_i^t),  \vx_\avg^{t+1}-\vx_\avg^{t}\big\rangle \\
\le &~ \textstyle \frac{1}{m}\sum_{i=1}^m  \big\langle \nabla_\vx f_i(\vx_i^{t}, \vy_i^t),  \vx_\avg^{t+1}-\vx_\avg^{t}\big\rangle  + \frac{L}{2m} \big((2\kappa+1)\|\vX_\perp^{t}\|_F^2+(\kappa+1)\|\vX_\avg^{t+1}-\vX_\avg^{t}\|_F^2\big) + \frac{\delta_t^2}{\mu}.
\end{aligned}
\end{equation}

In addition, notice $\vv_\avg^t = \frac{1}{m}\sum_{i=1}^m \nabla_\vx f_i(\vx_i^{t}, \vy_i^t)$.  
Hence,
\begin{align*}
&~\textstyle \frac{1}{m}\sum_{i=1}^m \big\langle  \nabla_\vx f_i(\vx_i^{t}, \vy_i^t),  \vx_\avg^{t+1}-\vx_\avg^{t}\big\rangle - \frac{1}{m}\sum_{i=1}^m \big\langle \vx_i^{t+1} - \vx_\avg^t, \vv_i^t\big\rangle\cr
= & ~ \textstyle \frac{1}{m} \sum_{i=1}^m \big\langle \vx_i^{t+1} - \vx_\avg^t, \vv_\avg^t - \vv_i^t\big\rangle
 \le \textstyle \frac{1}{2m} \sum_{i=1}^m \big(L_P\|\vx_i^{t+1} - \vx_\avg^t\|^2 + \frac{1}{L_P}\|\vv_\avg^t - \vv_i^t\|^2\big),
\end{align*}
where $L_P$ is given in Proposition~\ref{prop:P-S}.
The inequality above together with \eqref{eq:split4} gives
\begin{equation}\label{eq:split5}
\begin{aligned}
&\textstyle \big\langle \frac{1}{m}\sum_{i=1}^m \nabla_\vx f_i(\vx_\avg^{t}, \hat\vy_i^t),  \vx_\avg^{t+1}-\vx_\avg^{t}\big\rangle - \frac{1}{m}\sum_{i=1}^m \big\langle \vx_i^{t+1} - \vx_\avg^t, \vv_i^t\big\rangle\\
\le &~ \textstyle \frac{1}{2m} \big(L_P\|\vX^{t+1} - \vX_\avg^t\|_F^2 + \frac{1}{L_P}\|\vV_\perp^t\|_F^2\big)  + \frac{L}{2m} \big((2\kappa+1)\|\vX_\perp^{t}\|_F^2+(\kappa+1)\|\vX_\avg^{t+1}-\vX_\avg^{t}\|_F^2\big) + \frac{\delta_t^2}{\mu}.
\end{aligned}
\end{equation}
Moreover, by Young's inequality and the update formula of $\vLam$ in \eqref{eq:alg-update-lam-v}, it holds
\begin{align}\label{eq:split6}
&~\textstyle - \frac{L}{2\sqrt{m}}\big\langle(\vW-\vI)\widehat\vY^t, \vLam^{t+1}-\vLam^{t} \big\rangle\cr
=&~ \textstyle - \frac{L}{2\sqrt{m}}\big\langle(\vW-\vI)(\widehat\vY^t - \vY^t) , \vLam^{t+1}-\vLam^{t} \big\rangle - \frac{L}{2\sqrt{m}}\big\langle(\vW-\vI)\vY^t, \vLam^{t+1}-\vLam^{t} \big\rangle\cr 
\le & ~ \textstyle \frac{L}{2m\kappa}\|\widehat\vY^t - \vY^t\|_F^2 + \frac{L\kappa}{8}\|(\vW-\vI)^\top (\vLam^{t+1}-\vLam^{t})\|_F^2 - \frac{1}{\eta_\lambda}\|\vLam^{t+1}-\vLam^{t}\|_F^2.
\end{align}
Now add \eqref{eq:split5} and \eqref{eq:split6} to \eqref{eq:cross-gradP} and use the notations in \eqref{eq:avg-perp-not} to obtain  
\begin{equation}\label{eq:split7}
\begin{aligned}
&~\textstyle \big\langle \nabla P(\vx_\avg^{t}, \vLam^{t}), (\vx_\avg^{t+1}-\vx_\avg^{t}, \vLam^{t+1}-\vLam^{t})\rangle - \frac{1}{m}\sum_{i=1}^m \big\langle \vx_i^{t+1} - \vx_\avg^t, \vv_i^t\big\rangle\\
\le  &~ \textstyle \frac{1}{2m} \big(L_P\|\vX^{t+1} - \vX_\avg^t\|_F^2 + \frac{1}{L_P}\|\vV_\perp^t\|_F^2\big)  + \frac{L}{2m} \big((2\kappa+1)\|\vX_\perp^{t}\|_F^2+(\kappa+1)\|\vX_\avg^{t+1}-\vX_\avg^{t}\|_F^2\big) + \frac{\delta_t^2}{\mu} \\
& ~ \textstyle + \frac{L}{2m\kappa}\|\widehat\vY^t - \vY^t\|_F^2 + \frac{L\kappa}{8}\|(\vW-\vI)^\top (\vLam^{t+1}-\vLam^{t})\|_F^2 - \frac{1}{\eta_\lambda}\|\vLam^{t+1}-\vLam^{t}\|_F^2,
\end{aligned}
\end{equation}
which, together with $\|\vW-\vI\|_2\le 2$ and \eqref{eq:rel-y-x}, gives the desired result in \eqref{eq:split-P-v}.
\end{proof}

Now we are ready to show the one-iteration progress inequality.
\begin{theorem}\label{thm:one-iter}
Let $\{(\vX^t, \vLam^t, \vV^t)\}$ be generated from Alg.~\ref{alg:dgdm}. Then we have with $L_P=L\sqrt{4\kappa^2 + 1}$ that
\begin{equation}\label{eq:phi-diff3}
\begin{aligned}
& ~ \textstyle\frac{1}{2m}\big(\frac{1}{\eta_x}-2L_P - L(\kappa+1)\big)\|\vX^{t+1}-\vX_\avg^{t}\|^2 + \frac{1}{2m\eta_x}\|\vX^{t+1} - \widetilde\vX^t\|_F^2 + \big(\frac{1}{\eta_\lambda}- \frac{L_P+L\kappa}{2} \big) \|\vLam^{t+1}-\vLam^{t}\|_F^2 \\
 \le & ~ \textstyle \phi(\vx_\avg^{t}, \vLam^{t}) - \phi(\vx_\avg^{t+1}, \vLam^{t+1}) + \frac{1}{2m L_P} \|\vV_\perp^t\|_F^2 + \big(\frac{\rho^2}{2m\eta_x}+ \frac{L(4\kappa+1)}{2m} \big) \|\vX_\perp^{t}\|_F^2  +  \frac{2\delta_t^2}{\mu} .
\end{aligned}
\end{equation}
\end{theorem}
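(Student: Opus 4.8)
The plan is to upper-bound the one-step change of the potential $\phi(\vx_\avg^t,\vLam^t)=P(\vx_\avg^t,\vLam^t)+g(\vx_\avg^t)$ and then rearrange into \eqref{eq:phi-diff3}. Since $\phi$ splits into the smooth part $P$ and the nonsmooth part $g$, I would treat the two pieces separately. For the smooth part, I would invoke the $L_P$-smoothness of $P$ from Proposition~\ref{prop:P-S} to get the quadratic upper bound
\[
P(\vx_\avg^{t+1},\vLam^{t+1})-P(\vx_\avg^t,\vLam^t)\le \big\langle\nabla P(\vx_\avg^t,\vLam^t),(\vx_\avg^{t+1}-\vx_\avg^t,\vLam^{t+1}-\vLam^t)\big\rangle+\tfrac{L_P}{2}\big(\|\vx_\avg^{t+1}-\vx_\avg^t\|^2+\|\vLam^{t+1}-\vLam^t\|_F^2\big).
\]

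For the nonsmooth part, the key point is that the proximal step is taken per agent while the potential is evaluated at the average iterate. I would first use convexity of $g$ (Jensen's inequality) to write $g(\vx_\avg^{t+1})\le\frac1m\sum_i g(\vx_i^{t+1})$, so that $g(\vx_\avg^{t+1})-g(\vx_\avg^t)\le\frac1m\sum_i\big(g(\vx_i^{t+1})-g(\vx_\avg^t)\big)$; then apply Lemma~\ref{lem:g-step} to each summand and sum over $i$, turning the right-hand side into $-\frac1m\sum_i\langle\vx_i^{t+1}-\vx_\avg^t,\vv_i^t\rangle$ plus $-\frac{1}{2m\eta_x}\big(\|\vX^{t+1}-\vX_\avg^t\|_F^2+\|\vX^{t+1}-\widetilde\vX^t\|_F^2-\|\widetilde\vX^t-\vX_\avg^t\|_F^2\big)$. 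Adding the two pieces, the inner product $\langle\nabla P,\cdot\rangle$ together with $-\frac1m\sum_i\langle\vx_i^{t+1}-\vx_\avg^t,\vv_i^t\rangle$ is precisely the left-hand side of Lemma~\ref{lem:one-step}, so I would substitute \eqref{eq:split-P-v} directly.

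It then remains to collect coefficients, and two elementary geometric facts finish the job. First, the orthogonal decomposition $\|\vX^{t+1}-\vX_\avg^t\|_F^2=\|\vX_\perp^{t+1}\|_F^2+\|\vX_\avg^{t+1}-\vX_\avg^t\|_F^2$ (the zero-mean part $\vX_\perp^{t+1}$ is Frobenius-orthogonal to the constant-row difference $\vX_\avg^{t+1}-\vX_\avg^t$, since $\vone^\top\vX_\perp^{t+1}=\vzero$) gives $\|\vX_\avg^{t+1}-\vX_\avg^t\|_F^2\le\|\vX^{t+1}-\vX_\avg^t\|_F^2$; using also $\|\vX_\avg^{t+1}-\vX_\avg^t\|_F^2=m\|\vx_\avg^{t+1}-\vx_\avg^t\|^2$, this lets me absorb the smoothness term $\tfrac{L_P}{2}\|\vx_\avg^{t+1}-\vx_\avg^t\|^2$ and the $\tfrac{L(\kappa+1)}{2m}\|\vX_\avg^{t+1}-\vX_\avg^t\|_F^2$ term from Lemma~\ref{lem:one-step} into the coefficient of $\|\vX^{t+1}-\vX_\avg^t\|_F^2$. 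Second, the contraction $\|\widetilde\vX^t-\vX_\avg^t\|_F^2=\big\|(\vW-\tfrac1m\vone\vone^\top)\vX_\perp^t\big\|_F^2\le\rho^2\|\vX_\perp^t\|_F^2$ (using $\widetilde\vX^t=\vW\vX^t$, $\vW\vone=\vone$, and Assumption~\ref{assump-W}(ii)) converts the $+\frac{1}{2m\eta_x}\|\widetilde\vX^t-\vX_\avg^t\|_F^2$ term into the $\frac{\rho^2}{2m\eta_x}\|\vX_\perp^t\|_F^2$ appearing on the right of \eqref{eq:phi-diff3}.

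I expect the only real care to lie in the coefficient bookkeeping. For $\|\vX^{t+1}-\vX_\avg^t\|_F^2$, the contributions $\tfrac{L_P}{2m}$ (smoothness), $\tfrac{L_P}{2m}$ and $\tfrac{L(\kappa+1)}{2m}$ (Lemma~\ref{lem:one-step}), and $-\tfrac{1}{2m\eta_x}$ (Lemma~\ref{lem:g-step}) must tally to $\frac{1}{2m}\big(\frac{1}{\eta_x}-2L_P-L(\kappa+1)\big)$ after transposition; for $\|\vLam^{t+1}-\vLam^t\|_F^2$, the $\tfrac{L_P}{2}$ from smoothness and the $-(\frac{1}{\eta_\lambda}-\frac{L\kappa}{2})$ from Lemma~\ref{lem:one-step} give $\frac{1}{\eta_\lambda}-\frac{L_P+L\kappa}{2}$; and the $\|\vX_\perp^t\|_F^2$ terms $\tfrac{L(2\kappa+1)}{2m}+\tfrac{L\kappa}{m}$ must combine to $\tfrac{L(4\kappa+1)}{2m}$. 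The one genuinely conceptual (rather than computational) step is the use of Jensen's inequality on $g$, since that is what reconciles the per-agent proximal update with the potential evaluated at the averaged iterate.
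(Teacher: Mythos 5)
Your proposal is correct and follows essentially the same route as the paper's own proof: quadratic upper bound from the $L_P$-smoothness of $P$, Jensen's inequality on $g$ combined with Lemma~\ref{lem:g-step}, substitution of Lemma~\ref{lem:one-step}, the contraction bound $\|\widetilde\vX^t-\vX_\avg^t\|_F\le\rho\|\vX_\perp^t\|_F$, and the same coefficient tallies. The only (minor) difference is that you justify $\|\vX_\avg^{t+1}-\vX_\avg^t\|_F^2\le\|\vX^{t+1}-\vX_\avg^t\|_F^2$ via the orthogonal decomposition, which the paper simply states as a fact.
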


\begin{proof}
From the $L_P$-smoothness of $P$ in Proposition~\ref{prop:P-S}, it follows that
\begin{equation}\label{eq:phi-diff0}
\begin{aligned}
\textstyle \phi(\vx_\avg^{t+1}, \vLam^{t+1}) - \phi(\vx_\avg^{t}, \vLam^{t})
 \le & ~ \textstyle \frac{L_P}{2}\left(\|\vx_\avg^{t+1}-\vx_\avg^{t}\|^2 + \|\vLam^{t+1}-\vLam^{t}\|_F^2\right) \\
 &\hspace{-1cm}+ \big\langle \nabla P(\vx_\avg^{t}, \vLam^{t}), (\vx_\avg^{t+1}-\vx_\avg^{t}, \vLam^{t+1}-\vLam^{t})\big\rangle + g(\vx_\avg^{t+1}) - g(\vx_\avg^{t}).
\end{aligned}
\end{equation}
In addition, by the convexity of $g$, it holds $g(\vx_\avg^{t+1}) - g(\vx_\avg^{t}) \le \frac{1}{m}\sum_{i=1}^m \big(g(\vx_i^{t+1}) - g(\vx_\avg^{t})\big)$. Hence, we have from \eqref{eq:g-step} and \eqref{eq:phi-diff0} that
\begin{equation}\label{eq:phi-diff1}
\begin{aligned}
\phi(\vx_\avg^{t+1}, \vLam^{t+1}) - \phi(\vx_\avg^{t}, \vLam^{t})
 \le & ~ \textstyle \frac{L_P}{2}\left(\|\vx_\avg^{t+1}-\vx_\avg^{t}\|^2 + \|\vLam^{t+1}-\vLam^{t}\|_F^2\right) \\
 &\hspace{-2cm} \textstyle + \big\langle \nabla P(\vx_\avg^{t}, \vLam^{t}), (\vx_\avg^{t+1}-\vx_\avg^{t}, \vLam^{t+1}-\vLam^{t})\rangle - \frac{1}{m}\sum_{i=1}^m \big\langle \vx_i^{t+1} - \vx_\avg^t, \vv_i^t\big\rangle \\
 &\hspace{-2cm} \textstyle -\frac{1}{2m\eta_x}\sum_{i=1}^m\left(\|\vx_i^{t+1} - \vx_\avg^t\|^2 + \|\vx_i^{t+1} - \tilde\vx_i^t\|^2 - \|\tilde\vx_i^t- \vx_\avg^t\|^2\right).
\end{aligned}
\end{equation}
Adding \eqref{eq:split-P-v} to \eqref{eq:phi-diff1} and combining like terms yield
\begin{equation}\label{eq:phi-diff2}
\begin{aligned}
\phi(\vx_\avg^{t+1}, \vLam^{t+1}) - \phi(\vx_\avg^{t}, \vLam^{t})
 \le & ~ \textstyle \frac{L_P + L(\kappa+1)}{2m}\|\vX_\avg^{t+1}-\vX_\avg^{t}\|^2 - \big(\frac{1}{\eta_\lambda}- \frac{L\kappa}{2} - \frac{L_P}{2}\big) \|\vLam^{t+1}-\vLam^{t}\|_F^2 \\
 &\hspace{-2cm}+ \textstyle \frac{1}{2m} \big(L_P\|\vX^{t+1} - \vX_\avg^t\|_F^2 + \frac{1}{L_P}\|\vV_\perp^t\|_F^2\big) +  \frac{L(4\kappa+1)}{2m} \|\vX_\perp^{t}\|_F^2 +  \frac{2\delta_t^2}{\mu} \\
 &\hspace{-2cm} \textstyle -\frac{1}{2m\eta_x}\left(\|\vX^{t+1} - \vX_\avg^t\|_F^2 + \|\vX^{t+1} - \widetilde\vX^t\|_F^2 - \|\widetilde\vX^t- \vX_\avg^t\|_F^2\right).
\end{aligned}
\end{equation}
Since $\vW\vone = \vone$, it holds $\vW - \frac{1}{m}\vone\vone^\top=(\vW - \frac{1}{m}\vone\vone^\top)(\vI - \frac{1}{m}\vone\vone^\top)$. Hence, $\|\widetilde\vX^t- \vX_\avg^t\|_F=\|(\vW - \frac{1}{m}\vone\vone^\top)\vX_\perp^t\|_F \le \rho \|\vX_\perp^t\|_F$. Also, use the fact $\|\vX_\avg^{t+1}-\vX_\avg^{t}\|_F^2 \le \|\vX^{t+1}-\vX_\avg^{t}\|_F^2$ and rearrange terms in \eqref{eq:phi-diff2} to complete the proof.
\end{proof}

Below, we bound $\{\vX^t_\perp\}$ and $\{\vV^t_\perp\}$ and then combine with \eqref{eq:phi-diff3} to show a square-summable result on the iterates. The following results are from Lemma~C.7 and the end of the proof of Lemma~C.13 in \cite{mancino2022proximal}:
\begin{equation}\label{eq:x-perp}
\textstyle \|\vX_\perp^{t+1}\|_F^2 \le \rho\|\vX_\perp^t\|_F^2 + \frac{\eta_x^2}{1-\rho}\|\vV_\perp^t\|_F^2, \quad \textstyle \|\vX^{t+1}-\vX^t\|_F^2 \le 2\|\vX^{t+1}-\widetilde\vX^t\|_F^2 + 8\|\vX_\perp^t\|_F^2.
\end{equation}
With the two inequalities above, we are ready to bound the consensus errors.
\begin{lemma}\label{lem:consensus}
For any positive integer $T$, it holds
\begin{equation}\label{eq:v-perp4}
\begin{aligned}
\textstyle (1-\rho)\sum_{t=0}^{T-1}\|\vV_\perp^t\|_F^2 \le & ~ \textstyle \big\|\vV_\perp^{0}\big\|_F^2 + \frac{6m L^2\kappa^2}{1-\rho} \sum_{t=1}^{T-1} \|\vLam^t - \vLam^{t-1}\|_F^2\\
 & ~ \textstyle  \hspace{-2cm}+ \frac{2L^2(1+ 6\kappa^2)}{1-\rho} \sum_{t=1}^{T-1} \big(\|\vX^{t}-\widetilde\vX^{t-1}\|_F^2 + 4\|\vX_\perp^{t-1}\|_F^2\big) + \frac{3m\kappa^2}{1-\rho}\sum_{t=1}^{T-1}(\delta_t^2 + \delta_{t-1}^2),
\end{aligned}
\end{equation}
and
\begin{align}\label{eq:x-perp2}
&~\textstyle \left(1-\rho - \frac{8\eta_x^2 L^2(1+ 6\kappa^2)}{(1-\rho)^3} \right) \sum_{t=0}^{T} \|\vX_\perp^t\|_F^2 \le  \|\vX_\perp^0\|_F^2  + \frac{3m\eta_x^2\kappa^2}{(1-\rho)^3}\sum_{t=1}^{T-1}(\delta_t^2 + \delta_{t-1}^2) \cr
& ~ \textstyle  \hspace{1cm}+ \frac{\eta_x^2}{(1-\rho)^2}\Big( \big\|\vV_\perp^{0}\big\|_F^2 + \frac{6m L^2\kappa^2}{1-\rho} \sum_{t=1}^{T-1} \|\vLam^t - \vLam^{t-1}\|_F^2 + \frac{2L^2(1+ 6\kappa^2)}{1-\rho} \sum_{t=1}^{T-1} \|\vX^{t}-\widetilde\vX^{t-1}\|_F^2\Big).
\end{align}
\end{lemma}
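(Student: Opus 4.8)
The plan is to prove the two bounds in sequence: first establish a one-step contraction for the gradient-tracking consensus error $\|\vV_\perp^t\|_F$, then sum it to obtain \eqref{eq:v-perp4}, and finally feed this into the recursion \eqref{eq:x-perp} for $\|\vX_\perp^t\|_F$ to obtain \eqref{eq:x-perp2}.

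\emph{Step 1 (contraction for $\vV_\perp$).} Setting $\Delta^{t+1} := \nabla_\vx F(\vX^{t+1}, \vY^{t+1}) - \nabla_\vx F(\vX^{t}, \vY^{t})$, the tracking update in \eqref{eq:alg-update-lam-v} reads $\vV^{t+1} = \vW\vV^t + \Delta^{t+1}$. Since $\vW^\top\vone = \vone$ by Assumption~\ref{assump-W}(iii), the row-average of $\vV$ exactly tracks the average gradient, so projecting onto $\Span\{\vone\}^\perp$ and using the identity $(\vI - \frac{1}{m}\vone\vone^\top)\vW\vV_\perp^t = (\vW - \frac{1}{m}\vone\vone^\top)\vV_\perp^t$ together with $\|\vW - \frac{1}{m}\vone\vone^\top\|_2 = \rho$ gives $\|\vV_\perp^{t+1}\|_F \le \rho\|\vV_\perp^t\|_F + \|\Delta^{t+1}\|_F$. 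The weighted Young inequality $(\rho a + b)^2 \le \rho a^2 + \frac{1}{1-\rho}b^2$ then upgrades this to $\|\vV_\perp^{t+1}\|_F^2 \le \rho\|\vV_\perp^t\|_F^2 + \frac{1}{1-\rho}\|\Delta^{t+1}\|_F^2$.

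\emph{Step 2 (bounding the gradient difference).} The $L$-smoothness of each $f_i$ yields $\|\Delta^{t+1}\|_F^2 \le L^2\big(\|\vX^{t+1}-\vX^t\|_F^2 + \|\vY^{t+1}-\vY^t\|_F^2\big)$. To control $\|\vY^{t+1}-\vY^t\|_F$ I would insert the exact maximizers $S_\Phi(\vX^{t+1},\vLam^{t+1})$ and $S_\Phi(\vX^{t},\vLam^{t})$, bound the two approximation gaps by $\frac{\sqrt m\,\delta_{t+1}}{\mu}$ and $\frac{\sqrt m\,\delta_{t}}{\mu}$ through Proposition~\ref{prop:error-bd-vYt}, and bound the middle displacement by the Lipschitz estimate \eqref{eq:lip-S2}. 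Applying $(a+b+c)^2 \le 3(a^2+b^2+c^2)$ and $\kappa = L/\mu$ then produces $\|\Delta^{t+1}\|_F^2 \le L^2(1+6\kappa^2)\|\vX^{t+1}-\vX^t\|_F^2 + 6mL^2\kappa^2\|\vLam^{t+1}-\vLam^t\|_F^2 + 3m\kappa^2(\delta_{t+1}^2+\delta_t^2)$.

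\emph{Step 3 (summation).} Summing the Step-1 recursion after re-indexing $\Delta^{t+1}\to\Delta^t$ and using $\sum_{t=1}^{T-1}\|\vV_\perp^{t-1}\|_F^2 \le \sum_{t=0}^{T-1}\|\vV_\perp^t\|_F^2$ carries a factor $\rho$ to the left, leaving $(1-\rho)\sum_{t=0}^{T-1}\|\vV_\perp^t\|_F^2$; substituting the Step-2 bound together with the second inequality of \eqref{eq:x-perp} (to rewrite $\|\vX^t-\vX^{t-1}\|_F^2$ as $2\|\vX^t-\widetilde\vX^{t-1}\|_F^2 + 8\|\vX_\perp^{t-1}\|_F^2$) gives exactly \eqref{eq:v-perp4}. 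For \eqref{eq:x-perp2} I would telescope the first inequality of \eqref{eq:x-perp} in the same fashion to reach $(1-\rho)\sum_{t=0}^T\|\vX_\perp^t\|_F^2 \le \|\vX_\perp^0\|_F^2 + \frac{\eta_x^2}{1-\rho}\sum_{t=0}^{T-1}\|\vV_\perp^t\|_F^2$, plug in \eqref{eq:v-perp4}, and move the $4\|\vX_\perp^{t-1}\|_F^2$ contribution (whose sum is $\le 4\sum_{t=0}^T\|\vX_\perp^t\|_F^2$) to the left, which generates the coefficient $1-\rho - \frac{8\eta_x^2 L^2(1+6\kappa^2)}{(1-\rho)^3}$.

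The arguments are conceptually standard, so the main obstacle is the bookkeeping that makes every constant land precisely as stated: verifying the average-preservation of $\vV$ and the perp-subspace identity in Step~1, ensuring the $(a+b+c)^2 \le 3(\cdots)$ expansion combined with \eqref{eq:lip-S2} yields exactly $6mL^2\kappa^2$, $2L^2(1+6\kappa^2)$, and $3m\kappa^2$, and keeping the index shifts between $t$ and $t-1$ consistent across the two telescoped sums so that the $\|\vX_\perp\|_F^2$ term is absorbed with the correct power of $(1-\rho)$.
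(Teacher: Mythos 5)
Your proposal is correct and follows essentially the same route as the paper's proof: the same spectral contraction of $\vW-\frac{1}{m}\vone\vone^\top$ with the weighted Young inequality (equivalent to the paper's choice $a=\frac{1}{\rho}-1$), the same three-term decomposition of $\|\vY^t-\vY^{t-1}\|_F$ via Proposition~\ref{prop:error-bd-vYt} and \eqref{eq:lip-S2} yielding the constants $L^2(1+6\kappa^2)$, $6mL^2\kappa^2$, $3m\kappa^2$, and the same use of \eqref{eq:x-perp} followed by summation and absorption of the $\|\vX_\perp\|_F^2$ terms into the left-hand side. All the stated intermediate bounds and index manipulations check out against the paper's argument.
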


\begin{proof}
By $\vone^\top\vW = \vone^\top$ and $\vW\vone = \vone$, it holds $(\vI-\frac{1}{m}\vone\vone^\top)\vW = \vW - \frac{1}{m}\vone\vone^\top = (\vW - \frac{1}{m}\vone\vone^\top)(\vI - \frac{1}{m}\vone\vone^\top)$. Hence, $(\vI-\frac{1}{m}\vone\vone^\top)\vW \vV^{t-1} =(\vW - \frac{1}{m}\vone\vone^\top) \vV_\perp^{t-1}$, and thus  from \eqref{eq:alg-update-lam-v}, 
it follows that for any $a>0$,
\begin{align}\label{eq:v-perp1}
\|\vV_\perp^t\|_F^2 = & ~\left\|\textstyle (\vW - \frac{1}{m}\vone\vone^\top) \vV_\perp^{t-1} + (\vI-\frac{1}{m}\vone\vone^\top) \big(\nabla_\vx F(\vX^t, \vY^t) - \nabla_\vx F(\vX^{t-1}, \vY^{t-1})\big)\right\|_F^2\cr
\le & ~ \textstyle(1+a)\big\|(\vW - \frac{1}{m}\vone\vone^\top) \vV_\perp^{t-1}\big\|_F^2 + (1+\frac{1}{a})\big\|\nabla_\vx F(\vX^t, \vY^t) - \nabla_\vx F(\vX^{t-1}, \vY^{t-1})\big\|_F^2,
\end{align}
where the inequality is obtained by using the Young's inequality and the fact $\|\vI-\frac{1}{m}\vone\vone^\top\|_2\le 1$. Taking $a=\frac{1}{\rho}-1$ in \eqref{eq:v-perp1}, by $\|\vW - \frac{1}{m}\vone\vone^\top\|_2\le \rho$, and using the $L$-smoothness of each $f_i$, we have
\begin{align}\label{eq:v-perp2}
\|\vV_\perp^t\|_F^2 \le & ~ \textstyle \rho \big\|\vV_\perp^{t-1}\big\|_F^2 + \frac{1}{1-\rho}\big\|\nabla_\vx F(\vX^t, \vY^t) - \nabla_\vx F(\vX^{t-1}, \vY^{t-1})\big\|_F^2\cr
\le & ~ \textstyle \rho \big\|\vV_\perp^{t-1}\big\|_F^2 + \frac{L^2}{1-\rho}\big(\|\vX^t - \vX^{t-1}\|_F^2 + \|\vY^t - \vY^{t-1}\|_F^2 \big).
\end{align}
In addition, by \eqref{eq:lip-S2}, Proposition~\ref{prop:error-bd-vYt}, and the Young's inequality, it holds 
\begin{align}
&\,\|\vY^t - \vY^{t-1}\|_F^2 \cr
 = &\, \|\vY^t - S_\Phi(\vX^t, \vLam^t) + S_\Phi(\vX^t, \vLam^t) - S_\Phi(\vX^{t-1}, \vLam^{t-1}) + S_\Phi(\vX^{t-1}, \vLam^{t-1}) - \vY^{t-1}\|_F^2\cr
\le &\, 3\|\vY^t - S_\Phi(\vX^t, \vLam^t)\|_F^2 + 3\|S_\Phi(\vX^t, \vLam^t) - S_\Phi(\vX^{t-1}, \vLam^{t-1})\|_F^2 + 3\|S_\Phi(\vX^{t-1}, \vLam^{t-1}) - \vY^{t-1}\|_F^2 \cr
 \le &\, \textstyle \frac{3m\delta_t^2}{\mu^2} + \frac{3m\delta_{t-1}^2}{\mu^2} +  6\kappa^2\|\vX^t - \vX^{t-1}\|_F^2 + 6m\kappa^2 \|\vLam^t - \vLam^{t-1}\|_F^2.
\end{align}
 Hence, \eqref{eq:v-perp2} implies
\begin{align}\label{eq:v-perp3}
& ~\|\vV_\perp^t\|_F^2 \le  \textstyle \rho \big\|\vV_\perp^{t-1}\big\|_F^2 + \frac{L^2}{1-\rho}\left((1+ 6\kappa^2)\|\vX^t - \vX^{t-1}\|_F^2 + 6m\kappa^2\|\vLam^t - \vLam^{t-1}\|_F^2 + \frac{3m\delta_t^2}{\mu^2} + \frac{3m\delta_{t-1}^2}{\mu^2}\right)\\
\overset{\eqref{eq:x-perp}}\le & ~ \textstyle \rho \big\|\vV_\perp^{t-1}\big\|_F^2 + \frac{6m L^2\kappa^2}{1-\rho} \|\vLam^t - \vLam^{t-1}\|_F^2 + \frac{2L^2(1+ 6\kappa^2)}{1-\rho}\big(\|\vX^{t}-\widetilde\vX^{t-1}\|_F^2 + 4\|\vX_\perp^{t-1}\|_F^2\big)
\textstyle + \frac{3m\kappa^2}{1-\rho}(\delta_t^2 + \delta_{t-1}^2) \nonumber.
\end{align}

Summing up \eqref{eq:v-perp3} over $t=1$ to $T-1$, we obtain \eqref{eq:v-perp4}.
Now summing up the first inequality in \eqref{eq:x-perp} over $t=0$ to $T-1$ and using \eqref{eq:v-perp4} yield
\begin{align}\label{eq:x-perp1}
& ~\textstyle \|\vX_\perp^{T}\|_F^2 + (1-\rho) \sum_{t=0}^{T-1} \|\vX_\perp^t\|_F^2 \le \|\vX_\perp^0\|_F^2 + \frac{\eta_x^2}{1-\rho} \sum_{t=0}^{T-1}\|\vV_\perp^t\|_F^2\cr
\le & ~ \textstyle \|\vX_\perp^0\|_F^2 + \frac{\eta_x^2}{(1-\rho)^2}\Big( \big\|\vV_\perp^{0}\big\|_F^2 + \frac{6m L^2\kappa^2}{1-\rho} \sum_{t=1}^{T-1} \|\vLam^t - \vLam^{t-1}\|_F^2 \\
& ~\textstyle \hspace{1.5cm}+ \frac{2L^2(1+ 6\kappa^2)}{1-\rho} \sum_{t=1}^{T-1} \big(\|\vX^{t}-\widetilde\vX^{t-1}\|_F^2 + 4\|\vX_\perp^{t-1}\|_F^2\big) + \frac{3m\kappa^2}{1-\rho}\sum_{t=1}^{T-1}(\delta_t^2 + \delta_{t-1}^2) \Big),\nonumber
\end{align}
which apparently indicates \eqref{eq:x-perp2}. This completes the proof.
\end{proof}

By Theorem~\ref{thm:one-iter} and Lemma~\ref{lem:consensus}, we can easily show the following square-summable result.
\begin{theorem}
Let $\{(\vX^t, \vLam^t, \vV^t)\}$ be generated from Alg.~\ref{alg:dgdm}. Then for any positive integer $T$, it holds
\begin{align}\label{eq:phi-diff5}
& ~ \textstyle \alpha_x\sum_{t=0}^{T-1}\|\vX^{t+1}-\vX_\avg^{t}\|^2 + \tilde\alpha_x\sum_{t=0}^{T-1}\|\vX^{t+1} - \widetilde\vX^t\|_F^2 + \alpha_\lambda \sum_{t=0}^{T-1}\|\vLam^{t+1}-\vLam^{t}\|_F^2 \cr
\le & ~  \textstyle \phi(\vx_\avg^{0}, \vLam^{0}) - \phi(\vx_\avg^{T}, \vLam^{T}) + c \|\vX_\perp^0\|_F^2 + \Big(\frac{1}{2m L_P(1-\rho)}   +\frac{c\eta_x^2}{(1-\rho)^2} \Big)\|\vV_\perp^{0}\|_F^2 \\
& ~ \textstyle + \left(\frac{3\kappa^2}{2L_P(1-\rho)^2} + \frac{3c m\eta_x^2\kappa^2}{(1-\rho)^3}\right)\sum_{t=1}^{T-1}(\delta_t^2 + \delta_{t-1}^2)  +\sum_{t=0}^{T-1}\frac{2\delta_t^2}{\mu}  \nonumber,
\end{align}
where the constants $\alpha_x,\tilde\alpha_x, \alpha_\lambda$ and $c$ are defined as
\begin{subequations}\label{eq:def-c-alphas}
\begin{align}
&\textstyle \alpha_x = \frac{1}{2m}\big(\frac{1}{\eta_x}-2L_P - L(\kappa+1)\big), \quad \tilde\alpha_x = \frac{1}{2m\eta_x} - \frac{L^2(1+ 6\kappa^2)}{m L_P(1-\rho)^2} - \frac{2cL^2(1+6\kappa^2)\eta_x^2}{(1-\rho)^3}\\
& \textstyle \alpha_\lambda = \big(\frac{1}{\eta_\lambda}- \frac{L\kappa}{2} - \frac{L_P}{2}\big) - \frac{3L^2\kappa^2}{L_P(1-\rho)^2} - \frac{6cmL^2\kappa^2\eta_x^2}{(1-\rho)^3}, \quad c = \dfrac{\frac{\rho^2}{2m\eta_x}+ \frac{L(4\kappa+1)}{2m} +\frac{4L^2(1+ 6\kappa^2)}{m L_P(1-\rho)^2}}{1-\rho - \frac{8\eta_x^2 L^2(1+ 6\kappa^2)}{(1-\rho)^3}}\label{eq:def-little-c}.
\end{align}
\end{subequations}
\end{theorem}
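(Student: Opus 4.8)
The plan is to sum the one-iteration progress inequality \eqref{eq:phi-diff3} from Theorem~\ref{thm:one-iter} over $t=0,\ldots,T-1$, telescope the potential differences, and then eliminate the two consensus-error sums $\sum_t\|\vV_\perp^t\|_F^2$ and $\sum_t\|\vX_\perp^t\|_F^2$ that appear on the right by invoking the bounds \eqref{eq:v-perp4} and \eqref{eq:x-perp2} of Lemma~\ref{lem:consensus}. First I would add up \eqref{eq:phi-diff3}: the $\phi$-terms telescope into $\phi(\vx_\avg^0,\vLam^0)-\phi(\vx_\avg^T,\vLam^T)$; the left-hand side yields the three sums $\sum\|\vX^{t+1}-\vX_\avg^t\|^2$, $\sum\|\vX^{t+1}-\widetilde\vX^t\|_F^2$, $\sum\|\vLam^{t+1}-\vLam^t\|_F^2$ carrying the bare coefficients $\alpha_x$, $\tfrac{1}{2m\eta_x}$, and $\tfrac{1}{\eta_\lambda}-\tfrac{L_P+L\kappa}{2}$; and the right-hand side retains $\tfrac{1}{2mL_P}\sum\|\vV_\perp^t\|_F^2$, the term $(\tfrac{\rho^2}{2m\eta_x}+\tfrac{L(4\kappa+1)}{2m})\sum\|\vX_\perp^t\|_F^2$, and $\sum\tfrac{2\delta_t^2}{\mu}$.

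Next I would substitute \eqref{eq:v-perp4}, divided by $1-\rho$, into the $\sum\|\vV_\perp^t\|_F^2$ term. This produces four pieces: a $\|\vV_\perp^0\|_F^2$ term, a multiple of $\sum_{t=1}^{T-1}\|\vLam^t-\vLam^{t-1}\|_F^2$, a multiple of $\sum_{t=1}^{T-1}(\|\vX^t-\widetilde\vX^{t-1}\|_F^2+4\|\vX_\perp^{t-1}\|_F^2)$, and a multiple of $\sum_{t=1}^{T-1}(\delta_t^2+\delta_{t-1}^2)$. The key bookkeeping step is the index shift $\sum_{t=1}^{T-1}\|\vLam^t-\vLam^{t-1}\|_F^2=\sum_{t=0}^{T-2}\|\vLam^{t+1}-\vLam^t\|_F^2\le\sum_{t=0}^{T-1}\|\vLam^{t+1}-\vLam^t\|_F^2$, and likewise $\sum_{t=1}^{T-1}\|\vX^t-\widetilde\vX^{t-1}\|_F^2\le\sum_{t=0}^{T-1}\|\vX^{t+1}-\widetilde\vX^t\|_F^2$; these two pieces may therefore be moved to the left, reducing the coefficients of $\sum\|\vLam^{t+1}-\vLam^t\|_F^2$ and $\sum\|\vX^{t+1}-\widetilde\vX^t\|_F^2$ by $\tfrac{3L^2\kappa^2}{L_P(1-\rho)^2}$ and $\tfrac{L^2(1+6\kappa^2)}{mL_P(1-\rho)^2}$, respectively. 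The residual $4\|\vX_\perp^{t-1}\|_F^2$ contribution merges with the existing $\sum\|\vX_\perp^t\|_F^2$ term (after the bound $\sum_{t=1}^{T-1}\|\vX_\perp^{t-1}\|_F^2\le\sum_{t=0}^{T}\|\vX_\perp^t\|_F^2$), and a short check shows its coefficient equals $\tfrac{4L^2(1+6\kappa^2)}{mL_P(1-\rho)^2}$, so that the total coefficient of $\sum\|\vX_\perp^t\|_F^2$ becomes exactly the numerator of $c$ in \eqref{eq:def-little-c}.

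Finally I would bound $\sum\|\vX_\perp^t\|_F^2$ by \eqref{eq:x-perp2}; dividing by the denominator of $c$ multiplies the entire bracket of \eqref{eq:x-perp2} by $c$. This delivers the $c\|\vX_\perp^0\|_F^2$ term, the contributions $\tfrac{c\eta_x^2}{(1-\rho)^2}\|\vV_\perp^0\|_F^2$ and $\tfrac{3cm\eta_x^2\kappa^2}{(1-\rho)^3}\sum(\delta_t^2+\delta_{t-1}^2)$ on the right, and two further $\sum\|\vLam^{t+1}-\vLam^t\|_F^2$ and $\sum\|\vX^{t+1}-\widetilde\vX^t\|_F^2$ pieces (again via the same index-shift bound) that are moved to the left, subtracting $\tfrac{6cmL^2\kappa^2\eta_x^2}{(1-\rho)^3}$ and $\tfrac{2cL^2(1+6\kappa^2)\eta_x^2}{(1-\rho)^3}$ from the respective coefficients. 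Collecting all terms reproduces $\alpha_x$, $\tilde\alpha_x$, $\alpha_\lambda$ exactly as in \eqref{eq:def-c-alphas} together with the claimed right-hand side. I expect the only delicate point to be the bookkeeping: keeping the index shifts aligned and checking that each enlargement of a sum $\sum_{t=1}^{T-1}$ to $\sum_{t=0}^{T-1}$ (or to $\sum_{t=0}^{T}$ for $\vX_\perp$) is legitimate because the omitted terms are nonnegative. There is no analytic difficulty; as the text indicates, the estimate follows "easily" once the constants are tracked.
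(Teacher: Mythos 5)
Your proposal is correct and follows essentially the same route as the paper's proof: summing \eqref{eq:phi-diff3}, absorbing $\sum_t\|\vV_\perp^t\|_F^2$ via \eqref{eq:v-perp4} and $\sum_t\|\vX_\perp^t\|_F^2$ via \eqref{eq:x-perp2}, and using exactly the index-shift bounds you describe so that the shifted $\vLam$- and $\vX$-difference sums can be moved to the left with the stated coefficient reductions, yielding $\alpha_x,\tilde\alpha_x,\alpha_\lambda$ and $c$ as in \eqref{eq:def-c-alphas}. The only (shared, harmless) implicit point is that dividing through by the denominator of $c$ presumes it is positive, which is guaranteed later by the stepsize choice in \eqref{eq:cond-stepsize}.
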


\begin{proof}
Sum up \eqref{eq:phi-diff3} over $t=0$ to $T-1$ and use \eqref{eq:v-perp4}. We have
\begin{align}\label{eq:phi-diff4}
& ~ \textstyle\frac{1}{2m}\big(\frac{1}{\eta_x}-2L_P - L(\kappa+1)\big)\sum_{t=0}^{T-1}\|\vX^{t+1}-\vX_\avg^{t}\|^2 + \big(\frac{1}{\eta_\lambda}- \frac{L\kappa}{2} - \frac{L_P}{2}\big) \sum_{t=0}^{T-1}\|\vLam^{t+1}-\vLam^{t}\|_F^2 \cr
 \le & ~ \textstyle \phi(\vx_\avg^{0}, \vLam^{0}) - \phi(\vx_\avg^{T}, \vLam^{T}) + \big(\frac{\rho^2}{2m\eta_x}+ \frac{L(4\kappa+1)}{2m} \big) \sum_{t=0}^{T-1} \|\vX_\perp^{t}\|_F^2  -\frac{1}{2m\eta_x} \sum_{t=0}^{T-1}\|\vX^{t+1} - \widetilde\vX^t\|_F^2 \cr
 & ~ \textstyle   +\sum_{t=0}^{T-1}\frac{2\delta_t^2}{\mu} + \frac{1}{2m L_P(1-\rho)} \Big(\big\|\vV_\perp^{0}\big\|_F^2 + \frac{6m L^2\kappa^2}{1-\rho} \sum_{t=1}^{T-1} \|\vLam^t - \vLam^{t-1}\|_F^2  \cr
& ~ \textstyle \hspace{2cm}+ \frac{2L^2(1+ 6\kappa^2)}{1-\rho} \sum_{t=1}^{T-1} \big(\|\vX^{t}-\widetilde\vX^{t-1}\|_F^2 + 4\|\vX_\perp^{t-1}\|_F^2\big) + \frac{3m\kappa^2}{1-\rho}\sum_{t=1}^{T-1}(\delta_t^2 + \delta_{t-1}^2)\Big) \cr
\le & ~  \textstyle \phi(\vx_\avg^{0}, \vLam^{0}) - \phi(\vx_\avg^{T}, \vLam^{T}) + \frac{1}{2m L_P(1-\rho)} \big\|\vV_\perp^{0}\big\|_F^2 - \left(\frac{1}{2m\eta_x} - \frac{L^2(1+ 6\kappa^2)}{m L_P(1-\rho)^2}\right)\sum_{t=0}^{T-1}\|\vX^{t+1} - \widetilde\vX^t\|_F^2\\
&~ \textstyle + \frac{3L^2\kappa^2}{L_P(1-\rho)^2}\sum_{t=1}^{T-1} \|\vLam^t - \vLam^{t-1}\|_F^2 + \left(\frac{\rho^2}{2m\eta_x}+ \frac{L(4\kappa+1)}{2m} +\frac{4L^2(1+ 6\kappa^2)}{m L_P(1-\rho)^2}\right) \sum_{t=0}^{T-1} \|\vX_\perp^{t}\|_F^2 \cr
&~ \textstyle + \frac{3\kappa^2}{2L_P(1-\rho)^2}\sum_{t=1}^{T-1}(\delta_t^2 + \delta_{t-1}^2)  +\sum_{t=0}^{T-1}\frac{2\delta_t^2}{\mu} .  \nonumber
\end{align}
Now use \eqref{eq:x-perp2} to bound $\sum_{t=0}^{T-1} \|\vX_\perp^{t}\|_F^2$ and substitute it into \eqref{eq:phi-diff4}. We obtain
\begin{align*}
& ~ \textstyle\frac{1}{2m}\big(\frac{1}{\eta_x}-2L_P - L(\kappa+1)\big)\sum_{t=0}^{T-1}\|\vX^{t+1}-\vX_\avg^{t}\|^2 + \big(\frac{1}{\eta_\lambda}- \frac{L\kappa}{2} - \frac{L_P}{2}\big) \sum_{t=0}^{T-1}\|\vLam^{t+1}-\vLam^{t}\|_F^2 \cr
\le & ~  \textstyle \phi(\vx_\avg^{0}, \vLam^{0}) - \phi(\vx_\avg^{T}, \vLam^{T}) + \frac{1}{2m L_P(1-\rho)} \big\|\vV_\perp^{0}\big\|_F^2 - \left(\frac{1}{2m\eta_x} - \frac{L^2(1+ 6\kappa^2)}{m L_P(1-\rho)^2}\right)\sum_{t=0}^{T-1}\|\vX^{t+1} - \widetilde\vX^t\|_F^2\\
&~ \textstyle + \frac{3L^2\kappa^2}{L_P(1-\rho)^2}\sum_{t=1}^{T-1} \|\vLam^t - \vLam^{t-1}\|_F^2  +\sum_{t=0}^{T-1}\frac{2\delta_t^2}{\mu}  + \left(\frac{3\kappa^2}{2L_P(1-\rho)^2} + \frac{3c m\eta_x^2\kappa^2}{(1-\rho)^3}\right)\sum_{t=1}^{T-1}(\delta_t^2 + \delta_{t-1}^2) + c \|\vX_\perp^0\|_F^2\cr
&~ +  \textstyle \frac{c\eta_x^2}{(1-\rho)^2}\Big(\|\vV_\perp^{0}\|_F^2 + \frac{6m L^2\kappa^2}{1-\rho} \sum_{t=1}^{T-1} \|\vLam^t - \vLam^{t-1}\|_F^2 + \frac{2L^2(1+ 6\kappa^2)}{1-\rho} \sum_{t=1}^{T-1} \|\vX^{t}-\widetilde\vX^{t-1}\|_F^2\Big),  \nonumber
\end{align*}
where $c$ is defined in \eqref{eq:def-little-c}. Combining like terms in the inequality above gives the desired result.
\end{proof}

In the lemma below, we specify the stepsizes, which lead to positive coefficients on the left hand side of \eqref{eq:phi-diff5}. Its proof is about basic (but tedious) algebra, so we put it in the appendix for better readability. 

\begin{lemma}\label{lem:stepsize-choice}
Suppose the stepsize parameters $\eta_x$ and $\eta_\lambda$ are taken as
\begin{equation}\label{eq:cond-stepsize}
\textstyle \eta_x = \frac{(1-\rho)^2}{5L\sqrt{1+6\kappa^2}},\quad  \eta_\lambda = \frac{(1-\rho)^2}{L(9\kappa+2)} .
\end{equation}
Let $c,\alpha_x,\tilde\alpha_x$ and $\alpha_\lambda$ be defined in \eqref{eq:def-c-alphas}. Then it holds that
\begin{subequations}
\begin{align}
&\textstyle \alpha_x \ge  \frac{L(4\kappa+1)}{2m(1-\rho)^2},\ \tilde \alpha_x \ge  \frac{3L(1+4\kappa)}{25m (1-\rho)^2} ,\  \alpha_\lambda \ge  \frac{L(4\kappa+1)}{(1-\rho)^2} ,\label{eq:stepsize-ineq2}\\ 
&\textstyle \frac{3\kappa^2}{2L_P(1-\rho)^2} + \frac{3c m\eta_x^2\kappa^2}{(1-\rho)^3} \le \frac{3\kappa}{2L(1-\rho)^2}, \ \frac{1}{2m L_P(1-\rho)}   +\frac{c\eta_x^2}{(1-\rho)^2} \le \frac{1}{2m L \kappa (1-\rho)}. \label{eq:stepsize-ineq2-2}
\end{align}
\end{subequations}
\end{lemma}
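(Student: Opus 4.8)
The plan is to substitute the explicit stepsizes \eqref{eq:cond-stepsize} into the definitions \eqref{eq:def-c-alphas} and verify the five inequalities \eqref{eq:stepsize-ineq2}--\eqref{eq:stepsize-ineq2-2} one at a time; the work is elementary but the constants are tight, so the bookkeeping must be done with care. First I would record the two identities $\frac{1}{\eta_x}=\frac{5L\sqrt{1+6\kappa^2}}{(1-\rho)^2}$ and $\frac{1}{\eta_\lambda}=\frac{L(9\kappa+2)}{(1-\rho)^2}$, together with the elementary facts $\kappa\ge1$, $0\le\rho<1$, $L_P=L\sqrt{4\kappa^2+1}$, $2\kappa\le\sqrt{4\kappa^2+1}\le\sqrt5\,\kappa$, and $\sqrt6\,\kappa\le\sqrt{1+6\kappa^2}\le\sqrt7\,\kappa$. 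These turn every term appearing in \eqref{eq:def-c-alphas} into a multiple of $\frac{L}{m(1-\rho)^2}$ (or, for the last two bounds, of $\frac{1}{mL\kappa(1-\rho)}$).

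The crucial structural observation, which I would establish next, is the exact cancellation $\frac{8\eta_x^2L^2(1+6\kappa^2)}{(1-\rho)^3}=\frac{8(1-\rho)}{25}$. This collapses the denominator of $c$ in \eqref{eq:def-little-c} to precisely $\frac{17}{25}(1-\rho)$, so that $c(1-\rho)=\frac{25}{17}N$, where $N$ denotes the numerator of $c$ in \eqref{eq:def-little-c}. Since every $c$-dependent term in $\tilde\alpha_x$, $\alpha_\lambda$, and \eqref{eq:stepsize-ineq2-2} carries a factor $\eta_x^2/(1-\rho)^3$ or $\eta_x^2/(1-\rho)^2$, and since $\eta_x^2L^2(1+6\kappa^2)=\frac{(1-\rho)^4}{25}$, each such term reduces to a fixed constant (times at most one residual power of $(1-\rho)$) multiplied by $N$; the apparent $(1-\rho)^{-1}$ blow-up of $c$ is thereby absorbed. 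I would then record the uniform estimate $N\le\frac{C_0L\kappa}{m(1-\rho)^2}$ for an explicit $C_0$ (using $\rho^2<1$, $(1-\rho)^2\le1$ and the $\kappa$-bounds above).

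The inequality for $\alpha_x$ and the two bounds in \eqref{eq:stepsize-ineq2-2} then follow by direct substitution plus this estimate on $N$. For instance $\alpha_x\ge\frac{L(4\kappa+1)}{2m(1-\rho)^2}$ is equivalent to $5\sqrt{1+6\kappa^2}-(4\kappa+1)\ge(1-\rho)^2\bigl(2\sqrt{4\kappa^2+1}+\kappa+1\bigr)$, which holds because the left side already dominates the bracket when $(1-\rho)^2=1$; the bound on $\alpha_\lambda$ is analogous, the factor $9\kappa+2$ in $\frac{1}{\eta_\lambda}$ supplying the slack, and \eqref{eq:stepsize-ineq2-2} follows after using $\sqrt{4\kappa^2+1}\ge2\kappa$ and $\frac{\kappa}{1+6\kappa^2}\le\frac{1}{6\kappa}$ to absorb the $c$-term.

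The delicate case is the bound on $\tilde\alpha_x$, where the crude estimate on $N$ is \emph{not} sharp enough. Instead of bounding $c$, I would substitute $c(1-\rho)=\frac{25}{17}N$ and combine like terms before estimating: the two $\frac{1}{m\eta_x}$ contributions merge with coefficient $\frac12-\frac{\rho^2}{17}\ge\frac{15}{34}$, and the two $\frac{1+6\kappa^2}{\sqrt{4\kappa^2+1}}$-type contributions merge into the single factor $\frac{25}{17}$. After factoring out $\frac{L}{m(1-\rho)^2}$ and using $(1-\rho)^2\le1$, the claim reduces to the scalar inequality
\[
\tfrac{75}{34}\sqrt{1+6\kappa^2}\ \ge\ \tfrac{25}{17}\cdot\tfrac{1+6\kappa^2}{\sqrt{4\kappa^2+1}}+\tfrac{4\kappa+1}{17}+\tfrac{3(1+4\kappa)}{25},\qquad \kappa\ge1,
\]
which I would verify by using $\frac{1+6\kappa^2}{\sqrt{4\kappa^2+1}}\le\sqrt{3/2}\,\sqrt{1+6\kappa^2}$ (valid since $\frac{1+6\kappa^2}{4\kappa^2+1}<\frac32$) to peel off the first right-hand term, and then closing the remaining linear-in-$\kappa$ comparison with $\sqrt{1+6\kappa^2}\ge\sqrt6\,\kappa$. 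The main obstacle is precisely this tightness: the normalized margins are small (of order $0.1$--$0.3$), so the proof hinges on exploiting the exact cancellation of $(1-\rho)$ powers and on using $\kappa$-uniform bounds sharp enough near $\kappa=1$ rather than their looser asymptotic forms.
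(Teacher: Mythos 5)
Your proposal is correct and follows essentially the same route as the paper's proof: both exploit the identity $\eta_x^2L^2(1+6\kappa^2)=\frac{(1-\rho)^4}{25}$ to collapse the powers of $(1-\rho)$ hidden in $c$, both handle the tight $\tilde\alpha_x$ case by merging the $\frac{\rho^2}{2m\eta_x}$ term with the positive $\frac{1}{2m\eta_x}$ term and the two $\frac{1+6\kappa^2}{\sqrt{1+4\kappa^2}}$-type terms before estimating, and both close with $\sqrt{(1+6\kappa^2)/(1+4\kappa^2)}\le\sqrt{3/2}$ followed by $\sqrt{1+6\kappa^2}\ge\sqrt6\,\kappa$. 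The only difference is cosmetic: you use the exact denominator $\tfrac{17}{25}(1-\rho)$ of $c$, whereas the paper lower-bounds it by $\tfrac{2}{3}(1-\rho)$, so your merged coefficients ($\tfrac{2}{17}$ in place of the paper's $\tfrac{3}{25}$) carry marginally more slack.
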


\begin{remark}\label{rm:stepsize-set}
The stepsizes in \eqref{eq:cond-stepsize} are larger than those of the GDA method in \cite{lin2020gradient} and decentralized gradient-type methods in \cite{zhang2021taming, chen2022simple, gao2022decentralized}, by a factor of $\kappa$ or $\kappa^2$. This will lead our method to have better numerical performance and lower complexity, especially when $\kappa$ is big; see numerical results in Section~\ref{sec:numerical}.
\end{remark}

Below, we show a global convergence result when the error tolerance $\{\delta_t\}$ is square summable.
\begin{theorem}[Global convergence]
Under Assumptions~\ref{assump-func}--\ref{assump-W}, let $\{(\vX^t, \vLam^t)\}_{t\ge 0}$ be generated from Alg.~\ref{alg:dgdm} with $\eta_x$ and $\eta_\lambda$ set to those in \eqref{eq:cond-stepsize}. Suppose $\sum_{t=0}^\infty \delta_t^2 < +\infty$. Then any limit point $(\bar\vX, \bar\vLam)$ of the sequence $\{(\vX^t, \vLam^t)\}_{t\ge 0}$ satisfies: (i) $\bar\vX_\perp = \vzero$; (ii) $(\bar\vx_\avg, \bar\vLam)$ is a stationary solution of $\phi$.
\end{theorem}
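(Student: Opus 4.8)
The plan is to convert the square-summable estimate \eqref{eq:phi-diff5} into statements that every per-iteration increment tends to zero, and then to pass to the limit along a convergent subsequence in the first-order optimality conditions of the $\vx$- and $\vLam$-updates. First I would check that $\phi$ is bounded below: by Assumption~\ref{assump-relint} we have $p(\vx)=\min_\vLam P(\vx,\vLam)$, whence $\phi(\vx,\vLam)=P(\vx,\vLam)+g(\vx)\ge (p+g)(\vx)\ge (p+g)(\vx^*)>-\infty$ by Assumption~\ref{assump-func}. Under the stepsizes \eqref{eq:cond-stepsize}, Lemma~\ref{lem:stepsize-choice} makes $\alpha_x,\tilde\alpha_x,\alpha_\lambda$ strictly positive, and $\sum_t\delta_t^2<\infty$ makes all $\delta$-terms on the right of \eqref{eq:phi-diff5} finite. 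Letting $T\to\infty$ then gives
\[
\textstyle\sum_{t=0}^\infty\|\vX^{t+1}-\vX_\avg^t\|_F^2<\infty,\quad \sum_{t=0}^\infty\|\vX^{t+1}-\widetilde\vX^t\|_F^2<\infty,\quad \sum_{t=0}^\infty\|\vLam^{t+1}-\vLam^t\|_F^2<\infty .
\]

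Next I would feed these into \eqref{eq:v-perp4} and \eqref{eq:x-perp2}, whose left-hand coefficients are positive for the choice \eqref{eq:cond-stepsize}, to obtain $\sum_t\|\vX_\perp^t\|_F^2<\infty$ and $\sum_t\|\vV_\perp^t\|_F^2<\infty$. Consequently each of these quantities, along with $\delta_t$, tends to $0$. Since $\|\vx_\avg^{t+1}-\vx_\avg^t\|$ and $\|\vx_i^{t+1}-\tilde\vx_i^t\|$ are controlled by $\|\vX^{t+1}-\vX_\avg^t\|_F$ and $\|\vX^{t+1}-\widetilde\vX^t\|_F$, while $\|\vv_i^t-\vv_\avg^t\|\le\|\vV_\perp^t\|_F$, all of these differences vanish; and the $\vLam$-update \eqref{eq:alg-update-lam-v} with $\eta_\lambda>0$ forces $(\vW-\vI)\vY^t\to\vzero$.

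Now fix a limit point $(\bar\vX,\bar\vLam)$ with $(\vX^{t_k},\vLam^{t_k})\to(\bar\vX,\bar\vLam)$. Claim (i) is immediate: $\vX\mapsto\vX_\perp$ is continuous and $\|\vX_\perp^{t_k}\|_F\to0$, so $\bar\vX_\perp=\vzero$, i.e. $\bar\vX=\vone\bar\vx_\avg^\top$ and $\vx_i^{t_k}\to\bar\vx_\avg$ for every $i$. For the dual part of stationarity I would chain $\|\vY^t-S_\Phi(\vX^t,\vLam^t)\|_F\le\sqrt{m}\,\delta_t/\mu$ (Proposition~\ref{prop:error-bd-vYt}) with $\|S_\Phi(\vX^t,\vLam^t)-S_\Phi(\vone(\vx_\avg^t)^\top,\vLam^t)\|_F\le\kappa\|\vX_\perp^t\|_F$ (the Lipschitz bound \eqref{eq:lip-S}); both vanish, so $\vY^{t_k}$ shares the limit of $\widehat\vY^{t_k}:=S_\Phi(\vone(\vx_\avg^{t_k})^\top,\vLam^{t_k})$, which by continuity of $S_\Phi$ equals $\bar{\widehat\vY}:=S_\Phi(\vone\bar\vx_\avg^\top,\bar\vLam)$. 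Passing to the limit in $(\vW-\vI)\vY^{t_k}\to\vzero$ yields $(\vW-\vI)\bar{\widehat\vY}=\vzero$, hence $\nabla_\vLam P(\bar\vx_\avg,\bar\vLam)=-\frac{L}{2\sqrt{m}}(\vW-\vI)\bar{\widehat\vY}=\vzero$.

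For the primal part I would use the optimality condition of the proximal $\vx$-step, $-\tfrac{1}{\eta_x}(\vx_i^{t+1}-\tilde\vx_i^t)-\vv_i^t\in\partial g(\vx_i^{t+1})$. Here the gradient-tracking invariant $\vv_\avg^t=\frac1m\sum_i\nabla_\vx f_i(\vx_i^t,\vy_i^t)$ is the key: since $\vx_i^{t_k}\to\bar\vx_\avg$ and $\vy_i^{t_k}\to\hat\vy_i$ (the $i$-th row of $\bar{\widehat\vY}$), continuity of $\nabla_\vx f_i$ and the Danskin formula \eqref{eq:grad-P} give $\vv_\avg^{t_k}\to\frac1m\sum_i\nabla_\vx f_i(\bar\vx_\avg,\hat\vy_i)=\nabla_\vx P(\bar\vx_\avg,\bar\vLam)$; with $\vv_i^t-\vv_\avg^t\to\vzero$ this forces $\vv_i^{t_k}\to\nabla_\vx P(\bar\vx_\avg,\bar\vLam)$ for each $i$. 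As $\vx_i^{t_k+1}-\tilde\vx_i^{t_k}\to\vzero$ and $\vx_i^{t_k+1}\to\bar\vx_\avg$, closedness of the graph of $\partial g$ (outer semicontinuity of the convex subdifferential) gives $-\nabla_\vx P(\bar\vx_\avg,\bar\vLam)\in\partial g(\bar\vx_\avg)$, i.e. $\vzero\in\nabla_\vx P(\bar\vx_\avg,\bar\vLam)+\partial g(\bar\vx_\avg)$, which together with the dual relation proves (ii). The main obstacle will be precisely this limit transition through the nonsmooth term: one must justify replacing the inexact maximizer $\vY^t$ by the exact $S_\Phi$ evaluated at the consensus point, and verify that the tracked quantity $\vv_i^t$ recovers the true gradient $\nabla_\vx P$ of the reformulated smooth part; the remainder is bookkeeping of vanishing increments.
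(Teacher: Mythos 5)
Your proposal is correct and follows essentially the same route as the paper's proof: bound $\phi$ below via $p(\vx)=\min_\vLam P(\vx,\vLam)$, extract the three square-summable sums from \eqref{eq:phi-diff5}, feed them through \eqref{eq:x-perp2} and \eqref{eq:v-perp4} to get $\sum_t\|\vX_\perp^t\|_F^2<\infty$ and $\sum_t\|\vV_\perp^t\|_F^2<\infty$, control $\|\widehat\vY^t-\vY^t\|_F$ via Proposition~\ref{prop:error-bd-vYt} and \eqref{eq:lip-S}, and use the tracking invariant $\vv_\avg^t=\frac1m\sum_i\nabla_\vx f_i(\vx_i^t,\vy_i^t)$ with Danskin's formula \eqref{eq:grad-P} to identify the limit of $\vv_i^t$ with $\nabla_\vx P$. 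The only (cosmetic) difference is the concluding step: you pass to a convergent subsequence and invoke outer semicontinuity of $\partial g$ in the proximal optimality condition, while the paper shows the prox-gradient residual $\vx_\avg^t-\prox_{\eta_x g}\big(\vx_\avg^t-\eta_x\nabla_\vx P(\vx_\avg^t,\vLam^t)\big)\to\vzero$ along the whole sequence and then concludes by continuity; both are valid.
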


\begin{proof}
When $\eta_x$ and $\eta_\lambda$ are set to those in \eqref{eq:cond-stepsize}, we have from \eqref{eq:stepsize-ineq2} that $\alpha_x, \tilde\alpha_x$ and $\alpha_\lambda$ are all positive. Hence, taking $T\to\infty$ in \eqref{eq:phi-diff5} and using the lower boundedness of $\phi$ gives
\begin{equation}\label{eq:convg-series}  \textstyle \sum_{t=0}^{\infty}\|\vX^{t+1}-\vX_\avg^{t}\|_F^2 + \sum_{t=0}^{\infty}\|\vX^{t+1} - \widetilde\vX^t\|_F^2 + \sum_{t=0}^{\infty}\|\vLam^{t+1}-\vLam^{t}\|_F^2 < \infty,
\end{equation}
which together with \eqref{eq:v-perp4} and \eqref{eq:x-perp2} implies $\sum_{t=0}^{\infty}\|\vV_\perp^t\|_F^2 < \infty$ and $\sum_{t=0}^{\infty}\|\vX_\perp^t\|_F^2 < \infty$. Hence, $\vX_\perp^t\to \vzero$ as $t\to\infty$. Therefore, the limit point must satisfy $\bar\vX_\perp = \vzero$. 

In addition, we have $-\frac{L}{2\sqrt{m}}(\vW-\vI)\vY^t=\vLam^{t+1}-\vLam^{t}\to \vzero$ as $t\to\infty$ from \eqref{eq:convg-series}. 
From \eqref{eq:rel-y-x}, it holds 
\begin{equation}\label{eq:limt-y-hat-y}
\textstyle \|\widehat\vY^t - \vY^t\|_F \le \sqrt{2}\kappa \|\vX_\perp^t\|_F + \frac{\sqrt{2m}\delta_t}{\mu} \to 0.
\end{equation} Hence, 
\begin{equation}\label{eq:grad-lam-to-0}
\textstyle \nabla_\vLam P(\vx_\avg^t, \vLam^t) = -\frac{L}{2\sqrt{m}}(\vW-\vI)\widehat\vY^t = -\frac{L}{2\sqrt{m}}(\vW-\vI)\vY^t + \frac{L}{2\sqrt{m}}(\vW-\vI)(\vY^t-\widehat\vY^t) \to \vzero, \text{ as }t\to\infty.
\end{equation}
Also, it follows from \eqref{eq:convg-series} that $\vX^{t+1}-\vX_\avg^{t}\to\vzero$ and $\vX^{t+1} - \widetilde\vX^t\to\vzero$, as $t\to\infty$. Thus by $\vX_\perp^t\to \vzero$,  we have $\vx_i^{t+1} - \vx_\avg^t \to\vzero$ and $\tilde\vx_i^t - \vx_\avg^t \to\vzero$ for any $i\in [m]$. Therefore, it holds from \eqref{eq:alg-update-x} that
\begin{equation}\label{eq:limit-update-x}
\vx_\avg^t - \prox_{\eta_x g}\big(\vx_\avg^t - \eta_x \vv_i^t\big)\to \vzero, \text{ as }t\to\infty, \forall\, i \in [m].
\end{equation}

Moreover, $\vV_\perp^t\to\vzero$ and thus $\vv_i^t - \vv_\avg^t\to\vzero$ as $t\to\infty$. Now notice $\vv_\avg^t = \frac{1}{m}\sum_{i=1}^m \nabla_\vx f_i(\vx_i^t, \vy_i^t)$ and
\begin{align}\label{eq:bd-sum-y-grad-t}
& \, \textstyle \|\frac{1}{m}\sum_{i=1}^m \nabla_\vx f_i(\vx_i^t, \vy_i^t) - \frac{1}{m}\sum_{i=1}^m \nabla_\vx f_i(\vx_\avg^t, \hat\vy_i^t)\|^2 \le \frac{L^2}{m}\sum_{i=1}^m\big(\|\vx_i^t -\vx_\avg^t\|^2 + \|\vy_i^t - \hat\vy_i^t\|^2\big)\\
= &\, \textstyle \frac{L^2}{m}\big(\|\vX_\perp^t\|_F^2 + \|\vY^t -\widehat\vY^t\|_F^2\big) \overset{\eqref{eq:limt-y-hat-y}} {\longrightarrow} 0, \text{ as }t\to\infty. \nonumber
\end{align}
Hence, $\vv_\avg^t - \nabla_\vx P(\vx_\avg^t, \vLam^t) = \vv_\avg^t - \frac{1}{m}\sum_{i=1}^m \nabla_\vx f_i(\vx_\avg^t, \hat\vy_i^t) \to \vzero$, and thus $\vv_i^t - \nabla_\vx P(\vx_\avg^t, \vLam^t) \to \vzero$ for any $i$. Therefore, \eqref{eq:limit-update-x} indicates
$\vx_\avg^t - \prox_{\eta_x g}\big(\vx_\avg^t - \eta_x \nabla_\vx P(\vx_\avg^t, \vLam^t)\big)\to \vzero$, which together with \eqref{eq:grad-lam-to-0} implies the stationarity of $(\bar\vx_\avg, \bar\vLam)$. This completes the proof.
\end{proof}

In the follows, we bound the finite square-sum of each sequence. 
\begin{lemma}\label{lem:bd-sum-x-lam}
Let $\{(\vX^t, \vLam^t, \vV^t)\}$ be generated from Alg.~\ref{alg:dgdm} with $\eta_x$ and $\eta_\lambda$ set to those in \eqref{eq:cond-stepsize}. Then 
\begin{subequations}\label{eq:bd-sum-x-lam}
\begin{align}
&\textstyle \sum_{t=0}^{T-1}\|\vX^{t+1} - \vX_\avg^t\|_F^2 \le \frac{2mC_{0,T}(1-\rho)^2}{L(4\kappa+1)} \label{eq:bd-sum-x-lam-1}\\
&\textstyle \sum_{t=0}^{T-1}\|\vX^{t+1} - \widetilde\vX^t\|_F^2 \le  \frac{25 mC_{0,T} (1-\rho)^2}{3L(1+4\kappa)} ,\label{eq:bd-sum-x-lam-2}\\
&\textstyle \sum_{t=0}^{T-1}\|\vLam^{t+1}-\vLam^{t}\|_F^2 \le  \frac{C_{0,T}(1-\rho)^2}{L(4\kappa+1)} , \label{eq:bd-sum-x-lam-3}\\
& \textstyle \sum_{t=0}^{T-1} \|\vX_\perp^t\|_F^2 \le \frac{3(1-\rho)}{50 L^2(1+6\kappa^2)} \big\|\vV_\perp^{0}\big\|_F^2 +  \frac{53m C_{0,T} (1-\rho)^2}{50L(4\kappa+1)} + \frac{3m}{50 L^2} \Delta_T, \label{eq:x-perp2-1}\\
& \textstyle \sum_{t=0}^{T-1}\|\vV_\perp^t\|_F^2 \le \frac{3}{2(1-\rho)}\big\|\vV_\perp^{0}\big\|_F^2  + \frac{m(1+9\kappa^2)}{(1-\rho)^2} \Delta_T +  40mC_{0,T} L\kappa , \label{eq:v-perp4-1}
\end{align}
\end{subequations}
where $\Delta_T=\sum_{t=0}^{T-1} \delta_t^2$ and $C_{0,T} =    \textstyle \phi(\vx_\avg^{0}, \vLam^{0}) -  (p+g)(\vx^*) +  \frac{1}{2m L \kappa (1-\rho)}\|\vV_\perp^{0}\|_F^2 + \frac{5\kappa}{L(1-\rho)^2} \Delta_T$. 
\end{lemma}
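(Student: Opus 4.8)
The plan is to collapse the square-summable estimate \eqref{eq:phi-diff5} into a single clean bound and then peel off the five desired inequalities one at a time. The first task is to show that, under the stepsizes \eqref{eq:cond-stepsize}, the whole right-hand side of \eqref{eq:phi-diff5} is at most $C_{0,T}$. Three observations drive this. The initialization gives $\vx_i^0=\vx^0$ for every $i$, so $\vX_\perp^0=\vzero$ and the term $c\|\vX_\perp^0\|_F^2$ vanishes. By Assumption~\ref{assump-relint} we have $P(\vx,\vLam)\ge\min_\vLam P(\vx,\vLam)=p(\vx)$, whence $\phi(\vx_\avg^T,\vLam^T)\ge(p+g)(\vx_\avg^T)\ge(p+g)(\vx^*)$, which controls the $-\phi(\vx_\avg^T,\vLam^T)$ contribution. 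For the coefficients multiplying $\|\vV_\perp^0\|_F^2$ and $\sum_{t=1}^{T-1}(\delta_t^2+\delta_{t-1}^2)$ I would invoke the two estimates in \eqref{eq:stepsize-ineq2-2}; then, using $\sum_{t=1}^{T-1}(\delta_t^2+\delta_{t-1}^2)\le 2\Delta_T$, the identity $\tfrac1\mu=\tfrac{\kappa}{L}$, and $(1-\rho)^2\le1$, the two $\Delta_T$ pieces coalesce into exactly $\tfrac{5\kappa}{L(1-\rho)^2}\Delta_T$, the last term of $C_{0,T}$. This delivers the master inequality $\alpha_x\sum_{t=0}^{T-1}\|\vX^{t+1}-\vX_\avg^t\|^2+\tilde\alpha_x\sum_{t=0}^{T-1}\|\vX^{t+1}-\widetilde\vX^t\|_F^2+\alpha_\lambda\sum_{t=0}^{T-1}\|\vLam^{t+1}-\vLam^t\|_F^2\le C_{0,T}$.

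Because the three summands on the left are nonnegative, \eqref{eq:bd-sum-x-lam-1}--\eqref{eq:bd-sum-x-lam-3} follow at once: keep one sum, drop the other two, and divide by the positive lower bounds on $\alpha_x,\tilde\alpha_x,\alpha_\lambda$ supplied by \eqref{eq:stepsize-ineq2}.

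For the two consensus bounds I would feed these estimates into the recursions of Lemma~\ref{lem:consensus}. To obtain \eqref{eq:x-perp2-1} I start from \eqref{eq:x-perp2}: plugging $\eta_x=\frac{(1-\rho)^2}{5L\sqrt{1+6\kappa^2}}$ reduces the bracketed left-hand coefficient to $1-\rho-\tfrac{8(1-\rho)}{25}=\tfrac{17(1-\rho)}{25}$, and $\vX_\perp^0=\vzero$ removes the leading term. Then I reindex $\sum_{t=1}^{T-1}\|\vLam^t-\vLam^{t-1}\|_F^2$ and $\sum_{t=1}^{T-1}\|\vX^t-\widetilde\vX^{t-1}\|_F^2$ into $\sum_{t=0}^{T-1}$-sums, bound them with \eqref{eq:bd-sum-x-lam-3} and \eqref{eq:bd-sum-x-lam-2}, simplify via $\tfrac{\kappa^2}{1+6\kappa^2}\le\tfrac16$ and $\sum(\delta_t^2+\delta_{t-1}^2)\le2\Delta_T$, and divide through by $\tfrac{17(1-\rho)}{25}$; the rational constants that emerge are absorbed into the deliberately rounded targets $\tfrac{3}{50}$, $\tfrac{53}{50}$, $\tfrac{3}{50}$. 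For \eqref{eq:v-perp4-1} I divide \eqref{eq:v-perp4} by $(1-\rho)$ and substitute \eqref{eq:bd-sum-x-lam-3}, \eqref{eq:bd-sum-x-lam-2}, and the freshly proved \eqref{eq:x-perp2-1}; the latter injects an extra multiple of $\|\vV_\perp^0\|_F^2$ which, combined with the direct $\tfrac1{1-\rho}\|\vV_\perp^0\|_F^2$ term, stays below $\tfrac{3}{2(1-\rho)}\|\vV_\perp^0\|_F^2$.

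All of the conceptual content lives in the master inequality of the first paragraph; the rest is bookkeeping. The one mild subtlety is that $\sum_t\|\vX_\perp^t\|_F^2$ reappears on the right-hand side of the $\vV_\perp$ recursion \eqref{eq:v-perp4}, so it must be eliminated using the already-closed bound \eqref{eq:x-perp2-1} rather than expanded again. The main obstacle is thus purely arithmetic: verifying that each explicit constant produced by these substitutions does not exceed the clean constant recorded in \eqref{eq:bd-sum-x-lam}, which is exactly why those target constants are stated with a little slack.
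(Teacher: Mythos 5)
Your proposal is correct and follows essentially the same route as the paper's proof: establish the master inequality $\alpha_x\sum\|\vX^{t+1}-\vX_\avg^t\|^2+\tilde\alpha_x\sum\|\vX^{t+1}-\widetilde\vX^t\|_F^2+\alpha_\lambda\sum\|\vLam^{t+1}-\vLam^t\|_F^2\le C_{0,T}$ from \eqref{eq:phi-diff5}, \eqref{eq:stepsize-ineq2-2}, $\vX_\perp^0=\vzero$, and $\phi\ge(p+g)(\vx^*)$; peel off \eqref{eq:bd-sum-x-lam-1}--\eqref{eq:bd-sum-x-lam-3} via \eqref{eq:stepsize-ineq2}; then feed these into \eqref{eq:x-perp2} and \eqref{eq:v-perp4} in that order. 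The only cosmetic difference is that you use the exact coefficient $\tfrac{17(1-\rho)}{25}$ where the paper uses the slightly weaker bound $\tfrac{2(1-\rho)}{3}$ from \eqref{eq:stepsize-ineq1}, which changes nothing.
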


\begin{proof}
Notice that $\min_\vLam P(\vx, \vLam) = p(\vx)$. Hence, $\phi(\vx, \vLam) \ge (p+g)(\vx^*)$, and thus from \eqref{eq:phi-diff5}, the two results in \eqref{eq:stepsize-ineq2-2}, the setting $\vx_i^0 = \vx^0,\forall\, i\in[m]$, and $\frac{1}{\mu} = \frac{\kappa}{L}$, it follows
\begin{align}\label{eq:phi-diff5-1}
& ~ \textstyle \alpha_x\sum_{t=0}^{T-1}\|\vX^{t+1}-\vX_\avg^{t}\|^2 + \tilde\alpha_x\sum_{t=0}^{T-1}\|\vX^{t+1} - \widetilde\vX^t\|_F^2 + \alpha_\lambda \sum_{t=0}^{T-1}\|\vLam^{t+1}-\vLam^{t}\|_F^2 \le C_{0,T}.
\end{align}
By the bounds of $\alpha_x$, $\tilde \alpha_x$ and  $\alpha_\lambda$ in \eqref{eq:stepsize-ineq2}, we immediately have \eqref{eq:bd-sum-x-lam-1} -- \eqref{eq:bd-sum-x-lam-3} from the inequality above.   

Using the three inequalities in \eqref{eq:stepsize-ineq1}, we have from \eqref{eq:x-perp2} that 
$$\textstyle \sum_{t=0}^{T} \|\vX_\perp^t\|_F^2 
\le  \textstyle  \frac{3\eta_x^2}{2(1-\rho)^3}\big\|\vV_\perp^{0}\big\|_F^2  + \frac{9m\eta_x^2\kappa^2}{2(1-\rho)^4}\sum_{t=1}^{T-1}(\delta_t^2 + \delta_{t-1}^2) 
 \textstyle + \frac{3m}{50} \sum_{t=1}^{T-1} \|\vLam^t - \vLam^{t-1}\|_F^2 + \frac{3}{25} \sum_{t=1}^{T-1} \|\vX^{t}-\widetilde\vX^{t-1}\|_F^2.$$
Plugging \eqref{eq:bd-sum-x-lam-2} and \eqref{eq:bd-sum-x-lam-3} into the inequality above gives 
\begin{align*}
\textstyle \sum_{t=0}^{T} \|\vX_\perp^t\|_F^2 
\le &~  \textstyle  \frac{3\eta_x^2}{2(1-\rho)^3}\big\|\vV_\perp^{0}\big\|_F^2 + \frac{9m\eta_x^2\kappa^2}{2(1-\rho)^4}\sum_{t=1}^{T-1}(\delta_t^2 + \delta_{t-1}^2) + \frac{3mC_{0,T}(1-\rho)^2}{50L(4\kappa+1)} + \frac{mC_{0,T} (1-\rho)^2}{L(1+4\kappa)},
\end{align*}
which implies \eqref{eq:x-perp2-1} by the choice of $\eta_x$.
Hence, by \eqref{eq:bd-sum-x-lam-2} -- \eqref{eq:x-perp2-1}, we have from \eqref{eq:v-perp4} that
\begin{align*}
 \textstyle \sum_{t=0}^{T-1}\|\vV_\perp^t\|_F^2 \le & \, \textstyle \frac{1}{1-\rho} \Big( \textstyle \big\|\vV_\perp^{0}\big\|_F^2 + \frac{6m L^2\kappa^2}{1-\rho} \sum_{t=1}^{T-1} \|\vLam^t - \vLam^{t-1}\|_F^2\\
 & ~ \textstyle  \hspace{1cm}+ \frac{2L^2(1+ 6\kappa^2)}{1-\rho} \sum_{t=1}^{T-1} \big(\|\vX^{t}-\widetilde\vX^{t-1}\|_F^2 + 4\|\vX_\perp^{t-1}\|_F^2\big) + \frac{6m\kappa^2}{1-\rho} \Delta_T\Big)\\
 \le &\, \textstyle \frac{1}{1-\rho} \big\|\vV_\perp^{0}\big\|_F^2 + \frac{6m L^2\kappa^2}{(1-\rho)^2} \frac{C_{0,T}(1-\rho)^2}{L(4\kappa+1)} + \frac{2L^2(1+ 6\kappa^2)}{(1-\rho)^2} \frac{25 mC_{0,T} (1-\rho)^2}{3L(1+4\kappa)} + \frac{6m\kappa^2}{(1-\rho)^2} \Delta_T \\
 &\, \textstyle + \frac{8L^2(1+ 6\kappa^2)}{(1-\rho)^2}\left(\frac{3(1-\rho)}{50 L^2(1+6\kappa^2)} \big\|\vV_\perp^{0}\big\|_F^2 + \frac{53m C_{0,T} (1-\rho)^2}{50L(4\kappa+1)} + \frac{3m}{50 L^2} \Delta_T\right)\\
 \le &\, \textstyle \frac{3}{2(1-\rho)}\big\|\vV_\perp^{0}\big\|_F^2  + \frac{m(1+9\kappa^2)}{(1-\rho)^2} \Delta_T + \frac{6m C_{0,T} L\kappa^2}{4\kappa+1} + \frac{50mC_{0,T}L(1+ 6\kappa^2)}{3(1+4\kappa)} + \frac{9m C_{0,T} L(1+6\kappa^2)}{1+4\kappa}, 
\end{align*}
which implies \eqref{eq:v-perp4-1}.  Thus we complete the proof.
\end{proof}

Now we are ready to show the convergence rate result based on the stationarity violation.
\begin{theorem}
Under Assumptions~\ref{assump-func}--\ref{assump-W}, let $\{(\vX^t, \vLam^t)\}_{t= 0}^T$ be generated from Alg.~\ref{alg:dgdm} with $\eta_x$ and $\eta_\lambda$ set to those in \eqref{eq:cond-stepsize}, where $T$ is a positive integer. Choose $\tau$ from $\{0,\ldots, T-1\}$ uniformly at random. Then
\begin{subequations}\label{eq:opt-cond-bd-tau}
\begin{align}
&\EE_\tau \|\nabla_\vLam P(\vx_\avg^\tau, \vLam^\tau)\|_F \le \textstyle \frac{7\sqrt{L \kappa C_{0,T} }}{(1-\rho)\sqrt{T}} + \frac{\sqrt{2(1-\rho)}}{10\sqrt{mT}}\big\|\vV_\perp^{0}\big\|_F + \frac{2\kappa}{\sqrt{T}}\sqrt{\Delta_T}, \label{eq:opt-cond-bd-tau1}\\
&\textstyle \frac{1}{\eta_x}\EE_\tau \big\|\vx_\avg^\tau - \prox_{\eta_x g}\big(\vx_\avg^\tau - \eta_x \nabla_\vx P(\vx_\avg^\tau, \vLam^\tau)\big)\big\| \le \frac{2}{\sqrt{T}}\sqrt{ C_{1,T} +  \frac{156 C_{0,T} L \kappa}{(1-\rho)^2} }, \label{eq:opt-cond-bd-tau2} 
\end{align}
\end{subequations}
where $C_{0,T}$ is defined in Lemma~\ref{lem:bd-sum-x-lam}, and 
\begin{equation}\label{eq:C1}
C_{1,T} = \textstyle \frac{1}{m} \left( \frac{3}{2(1-\rho)} + \frac{3(1-\rho)(1+2\kappa^2)}{50(1+6\kappa^2)} + \frac{3}{2(1-\rho)^2}\right)\big\|\vV_\perp^{0}\big\|_F^2 + \left( \frac{3(1+2\kappa^2)}{50}   + \frac{3(1+6\kappa^2)}{2(1-\rho)^4}  + \frac{1+9\kappa^2}{(1-\rho)^2} + 2\kappa^2\right) \Delta_T.
\end{equation}
\end{theorem}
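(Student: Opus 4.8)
The plan is to prove the two bounds \eqref{eq:opt-cond-bd-tau1} and \eqref{eq:opt-cond-bd-tau2} separately. In each case I would first control the per-iteration stationarity violation by quantities whose finite square-sums are already bounded in Lemma~\ref{lem:bd-sum-x-lam}, then average over the randomly chosen $\tau$ and apply the Cauchy--Schwarz (Jensen) inequality $\EE_\tau\|a^\tau\|\le\big(\frac{1}{T}\sum_{t=0}^{T-1}\|a^t\|^2\big)^{1/2}$ to convert square-sums into the $\frac{1}{\sqrt{T}}$-rate bounds.

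For the dual bound \eqref{eq:opt-cond-bd-tau1}, the starting point is the identity \eqref{eq:grad-lam-to-0}, namely $\nabla_\vLam P(\vx_\avg^t,\vLam^t)=-\frac{L}{2\sqrt{m}}(\vW-\vI)\widehat\vY^t$. I would split $\widehat\vY^t=\vY^t+(\widehat\vY^t-\vY^t)$ and observe, from the $\vLam$-update \eqref{eq:alg-update-lam-v}, that $-\frac{L}{2\sqrt{m}}(\vW-\vI)\vY^t=-\frac{1}{\eta_\lambda}(\vLam^{t+1}-\vLam^t)$. Using $\|\vW-\vI\|_2\le2$ this gives $\|\nabla_\vLam P(\vx_\avg^t,\vLam^t)\|_F\le\frac{1}{\eta_\lambda}\|\vLam^{t+1}-\vLam^t\|_F+\frac{L}{\sqrt{m}}\|\widehat\vY^t-\vY^t\|_F$, and the residual is bounded by \eqref{eq:limt-y-hat-y} in terms of $\|\vX_\perp^t\|_F$ and $\delta_t$. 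Summing over $t$, applying Cauchy--Schwarz term by term, and inserting \eqref{eq:bd-sum-x-lam-3}, \eqref{eq:x-perp2-1}, together with $\sum_t\delta_t^2=\Delta_T$ and the value of $\eta_\lambda$ in \eqref{eq:cond-stepsize}, yields three contributions; one then checks that the $C_{0,T}$-proportional parts combine into the leading term $\frac{7\sqrt{L\kappa C_{0,T}}}{(1-\rho)\sqrt{T}}$, while the $\|\vV_\perp^0\|_F$- and $\Delta_T$-proportional parts assemble exactly into the two remaining terms (indeed the $\|\vV_\perp^0\|_F$ contribution simplifies cleanly to $\frac{\sqrt{2(1-\rho)}}{10\sqrt{mT}}\|\vV_\perp^0\|_F$).

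For the primal bound \eqref{eq:opt-cond-bd-tau2}, I would compare the true proximal-gradient residual at $\vx_\avg^t$ with the algorithm's update $\vx_i^{t+1}=\prox_{\eta_x g}(\tilde\vx_i^t-\eta_x\vv_i^t)$. Nonexpansiveness of $\prox_{\eta_x g}$ gives, for each $i$, the chain $\|\vx_\avg^t-\prox_{\eta_x g}(\vx_\avg^t-\eta_x\nabla_\vx P(\vx_\avg^t,\vLam^t))\|\le\|\vx_\avg^t-\vx_i^{t+1}\|+\|\tilde\vx_i^t-\vx_\avg^t\|+\eta_x\|\vv_i^t-\nabla_\vx P(\vx_\avg^t,\vLam^t)\|$. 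Averaging the squared inequality over $i$ converts the right-hand side into $\|\vX^{t+1}-\vX_\avg^t\|_F^2$, the mixing consensus term $\|\widetilde\vX^t-\vX_\avg^t\|_F^2\le\rho^2\|\vX_\perp^t\|_F^2$, the tracking error $\|\vV_\perp^t\|_F^2$, and $m\|\vv_\avg^t-\nabla_\vx P(\vx_\avg^t,\vLam^t)\|^2$; using $\vv_\avg^t=\frac1m\sum_i\nabla_\vx f_i(\vx_i^t,\vy_i^t)$ and $\nabla_\vx P(\vx_\avg^t,\vLam^t)=\frac1m\sum_i\nabla_\vx f_i(\vx_\avg^t,\hat\vy_i^t)$, the last term is bounded via \eqref{eq:bd-sum-y-grad-t} and \eqref{eq:rel-y-x} by $L^2(1+2\kappa^2)\|\vX_\perp^t\|_F^2+2m\kappa^2\delta_t^2$. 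Hence each squared gradient-mapping value is a combination of $\|\vX^{t+1}-\vX_\avg^t\|_F^2$, $\|\vX_\perp^t\|_F^2$, $\|\vV_\perp^t\|_F^2$, and $\delta_t^2$, all of which have finite square-sums in \eqref{eq:bd-sum-x-lam}. Averaging over $\tau$ and invoking Jensen then delivers \eqref{eq:opt-cond-bd-tau2}, after grouping the $C_{0,T}$-proportional contributions from \eqref{eq:bd-sum-x-lam-1}, \eqref{eq:x-perp2-1}, \eqref{eq:v-perp4-1} into $\frac{156 C_{0,T}L\kappa}{(1-\rho)^2}$ and the $\|\vV_\perp^0\|_F^2$- and $\Delta_T$-proportional contributions into $C_{1,T}$ of \eqref{eq:C1}.

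The main obstacle I anticipate is not conceptual but the constant bookkeeping: one must carefully sort every error contribution according to whether it scales with $C_{0,T}$, with $\|\vV_\perp^0\|_F^2$, or with $\Delta_T$, and then exploit the specific stepsizes in \eqref{eq:cond-stepsize} (so that factors such as $\eta_x^2L^2(1+6\kappa^2)$ and $\eta_\lambda L(9\kappa+2)$ collapse to dimensionless constants) to confirm that the accumulated coefficients fit inside the stated bounds, including reproducing the rounded numerical constants $7$, $\tfrac{1}{10}$, $2$, and $156$.
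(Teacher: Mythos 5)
Your proposal matches the paper's own proof essentially step for step: for the dual bound, the same decomposition $\widehat\vY^\tau=\vY^\tau+(\widehat\vY^\tau-\vY^\tau)$ with the identification $-\frac{L}{2\sqrt{m}}(\vW-\vI)\vY^\tau=-\frac{1}{\eta_\lambda}(\vLam^{\tau+1}-\vLam^\tau)$ and the residual controlled by \eqref{eq:rel-y-x}; for the primal bound, the same prox-nonexpansiveness chain through $\vx_i^{\tau+1}=\prox_{\eta_x g}(\tilde\vx_i^\tau-\eta_x\vv_i^\tau)$, with the gradient mismatch split into the tracking error $\|\vV_\perp^\tau\|_F$ and the term bounded via \eqref{eq:bd-sum-y-grad-t} and \eqref{eq:rel-y-x}, followed in both cases by expectation over $\tau$, Jensen, and the square-sum bounds of Lemma~\ref{lem:bd-sum-x-lam}. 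The only remaining work is the constant bookkeeping you flag, which the paper carries out exactly along the grouping you describe ($C_{0,T}$-, $\|\vV_\perp^0\|_F^2$-, and $\Delta_T$-proportional parts), so the proposal is correct and takes the same route.
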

\begin{proof}
By the definition of $\widehat\vY^t$ in the proof of Lemma~\ref{lem:one-step} and \eqref{eq:grad-P}, we use the triangle inequality to have
\begin{align}\label{eq:bd-grad-lam-tau0}
\|\nabla_\vLam P(\vx_\avg^\tau, \vLam^\tau)\|_F \le &\, \textstyle \frac{L}{2\sqrt{m}}\|(\vW-\vI)\vY^\tau\|_F + \frac{L}{2\sqrt{m}}\|(\vW-\vI)(\vY^\tau-\widehat\vY^\tau)\|_F\nonumber\\
= &\, \textstyle \frac{1}{\eta_\lambda}\|\vLam^{\tau+1} - \vLam^\tau\|_F + \frac{L}{2\sqrt{m}}\|(\vW-\vI)(\vY^\tau-\widehat\vY^\tau)\|_F.
\end{align} 
In addition, it follows from \eqref{eq:rel-y-x} and $\|\vW-\vI\|_2 \le 2$ that 
$ \|(\vW-\vI)(\vY^\tau-\widehat\vY^\tau)\|_F \le 2\sqrt{2} \kappa\|\vX_\perp^\tau\|_F + \frac{2\sqrt{2m}\delta_\tau}{\mu}.$ From the selection of $\tau$ and \eqref{eq:bd-sum-x-lam-3}, it holds $\EE_\tau \|\vLam^{\tau+1} - \vLam^\tau\|_F \le \sqrt{\EE_\tau \big[\|\vLam^{\tau+1} - \vLam^\tau\|_F^2\big]} \le \sqrt{\frac{C_{0,T}(1-\rho)^2}{T L(4\kappa+1)}}$, and similarly from \eqref{eq:x-perp2-1}, it holds $\EE_\tau \|\vX_\perp^\tau\|_F \le \frac{1}{\sqrt{T}}\sqrt{ \frac{3(1-\rho)}{50 L^2(1+6\kappa^2)} \big\|\vV_\perp^{0}\big\|_F^2 + \frac{53m C_{0,T} (1-\rho)^2}{50L(4\kappa+1)} + \frac{3m}{50 L^2} \Delta_T }$.
Moreover, $\EE_\tau[\delta_\tau] \le \sqrt{\EE_\tau[\delta_\tau^2]} = \frac{1}{\sqrt{T}}\sqrt{\Delta_T}$.
Hence, with $\eta_\lambda$ given in \eqref{eq:cond-stepsize}, the inequality in \eqref{eq:bd-grad-lam-tau0} indicates 
\begin{align}\label{eq:bd-grad-lam-tau}
&\,\EE_\tau \|\nabla_\vLam P(\vx_\avg^\tau, \vLam^\tau)\|_F \nonumber \\
 \le &\, \textstyle \frac{L(9\kappa+2)}{(1-\rho)^2} \sqrt{\frac{C_{0,T}(1-\rho)^2}{T L(4\kappa+1)}} + \frac{\sqrt{2}L\kappa}{\sqrt{mT}} \sqrt{ \frac{3(1-\rho)}{50 L^2(1+6\kappa^2)} \big\|\vV_\perp^{0}\big\|_F^2 + \frac{53m C_{0,T} (1-\rho)^2}{50L(4\kappa+1)} + \frac{3m}{50 L^2} \Delta_T } + \frac{\sqrt{2}\kappa}{\sqrt{T}}\sqrt{\Delta_T} \nonumber \\
\le &\, \textstyle \frac{9\sqrt{LC_{0,T}(4\kappa+1)}}{4(1-\rho)\sqrt{T}} + \frac{\sqrt{2(1-\rho)}}{10\sqrt{mT}}\big\|\vV_\perp^{0}\big\|_F +  \frac{(1-\rho)\sqrt{L\kappa}}{\sqrt{T}} \sqrt{C_{0,T} } + \frac{\kappa\sqrt{3\Delta_T}}{5\sqrt{T}} + \frac{\sqrt{2}\kappa}{\sqrt{T}}\sqrt{\Delta_T},
\end{align}
where the last inequality holds by $\frac{9\kappa+2}{\sqrt{4\kappa+1}} \le \frac{9}{4}\sqrt{4\kappa+1}$, $\sqrt{a+b} \le \sqrt{a} + \sqrt{b}, \forall\, a, b\in \RR_+$, $\frac{\kappa}{\sqrt{1+6\kappa^2}} \le \frac{1}{\sqrt{6}}$, and $\frac{\sqrt{53}\kappa}{5\sqrt{4\kappa+1}}\le \sqrt\kappa$. 
Now we obtain \eqref{eq:opt-cond-bd-tau1} from \eqref{eq:bd-grad-lam-tau} by noticing $\frac{9}{4}\sqrt{4\kappa+1}+\sqrt{\kappa} \le 7\sqrt{\kappa}$ and $\frac{\sqrt{3}}{5} + \sqrt{2} \le 2$.

Furthermore, by \eqref{eq:grad-P}, it holds 
$$\left\|\vx_\avg^\tau - \prox_{\eta_x g}\big(\vx_\avg^\tau - \eta_x \nabla_\vx P(\vx_\avg^\tau, \vLam^\tau)\big)\right\| =  \left\|\textstyle \vx_\avg^\tau - \prox_{\eta_x g}\big(\vx_\avg^\tau -  \frac{\eta_x}{m}\sum_{i=1}^m \nabla_\vx f_i(\vx_\avg^{\tau}, \hat\vy_i^\tau)\big)\right\|.$$
Hence, by the triangle inequality and the nonexpansiveness of $\prox_{\eta_x g}$, we have
\begin{align*}
&\,\left\|\vx_\avg^\tau - \prox_{\eta_x g}\big(\vx_\avg^\tau - \eta_x \nabla_\vx P(\vx_\avg^\tau, \vLam^\tau)\big)\right\| \nonumber \\
\le &\,  \left\|\textstyle \vx_\avg^\tau - \prox_{\eta_x g}\big(\tilde\vx_i^\tau - \eta_x\vv_i^\tau\big)\right\| + \left\| \textstyle \vx_\avg^\tau - \frac{\eta_x}{m}\sum_{i=1}^m \nabla_\vx f_i(\vx_\avg^{\tau}, \hat\vy_i^\tau) - \big(\tilde\vx_i^\tau - \eta_x\vv_i^\tau\big)\right\| \nonumber \\
\le &\, \left\|\textstyle \vx_\avg^\tau - \prox_{\eta_x g}\big(\tilde\vx_i^\tau - \eta_x\vv_i^\tau\big)\right\| + \eta_x \left\| \textstyle  \frac{1}{m}\sum_{i=1}^m \nabla_\vx f_i(\vx_i^{\tau}, \vy_i^\tau) - \frac{1}{m}\sum_{i=1}^m \nabla_\vx f_i(\vx_\avg^{\tau}, \hat\vy_i^\tau)\right\| \\
&\, + \big\|\vx_\avg^\tau - \tilde\vx_i^\tau\big\|  + \eta_x\left\| \textstyle \frac{1}{m}\sum_{i=1}^m \nabla_\vx f_i(\vx_i^{\tau}, \vy_i^\tau) - \vv_i^\tau\right\|. \nonumber
\end{align*}
Taking square of both sides of the inequality above and using the Young's inequality gives
\begin{align}\label{eq:bd-grad-Px}
&\,\left\|\vx_\avg^\tau - \prox_{\eta_x g}\big(\vx_\avg^\tau - \eta_x \nabla_\vx P(\vx_\avg^\tau, \vLam^\tau)\big)\right\|^2\nonumber \\
\le &\, 4\left\|\textstyle \vx_\avg^\tau - \prox_{\eta_x g}\big(\tilde\vx_i^\tau - \eta_x\vv_i^\tau\big)\right\|^2 + 4\eta_x^2 \left\| \textstyle  \frac{1}{m}\sum_{i=1}^m \nabla_\vx f_i(\vx_i^{\tau}, \vy_i^\tau) - \frac{1}{m}\sum_{i=1}^m \nabla_\vx f_i(\vx_\avg^{\tau}, \hat\vy_i^\tau)\right\|^2 \\
&\, + 4\big\|\vx_\avg^\tau - \tilde\vx_i^\tau\big\|^2  + 4\eta_x^2\left\| \textstyle \frac{1}{m}\sum_{i=1}^m \nabla_\vx f_i(\vx_i^{\tau}, \vy_i^\tau) - \vv_i^\tau\right\|^2. \nonumber
\end{align}
Substituting \eqref{eq:bd-sum-y-grad-t} with $t=\tau$ into \eqref{eq:bd-grad-Px} and summing it up over $i=1,\ldots,m$ give
\begin{align}\label{eq:bd-grad-Px2}
&\, m\left\|\vx_\avg^\tau - \prox_{\eta_x g}\big(\vx_\avg^\tau - \eta_x \nabla_\vx P(\vx_\avg^\tau, \vLam^\tau)\big)\right\|^2\nonumber \\
\le &\, 4\left\|\textstyle \vX_\avg^\tau - \vX^{\tau + 1}\right\|_F^2 +  4\eta_x^2 L^2\big(\|\vX^{\tau}_\perp \|_F^2 + \|\vY^\tau - \widehat\vY^\tau\|_F^2\big) + 4\big\|\vX_\avg^\tau - \widetilde\vX^\tau\big\|_F^2  + 4\eta_x^2\left\| \vV_\perp^\tau\right\|_F^2, 
\end{align}
where we have used \eqref{eq:alg-update-x} and the fact $\vv_\avg^t = \frac{1}{m}\sum_{i=1}^m \nabla_\vx f_i(\vx_i^{t}, \vy_i^t)$ for any $t$. Use \eqref{eq:rel-y-x} and notice $\big\|\vX_\avg^\tau - \widetilde\vX^\tau\big\|_F^2 \le \|\vX_\perp^\tau\|_F^2$. We have from \eqref{eq:bd-grad-Px2} that
\begin{align}\label{eq:bd-grad-Px3}
&\, m\left\|\vx_\avg^\tau - \prox_{\eta_x g}\big(\vx_\avg^\tau - \eta_x \nabla_\vx P(\vx_\avg^\tau, \vLam^\tau)\big)\right\|^2\nonumber \\
\le &\, 4\left\|\textstyle \vX_\avg^\tau - \vX^{\tau + 1}\right\|_F^2 +  4\eta_x^2 L^2(1+2\kappa^2)\big\|\vX_\perp^\tau \big\|_F^2 + 8m \eta_x^2 \kappa^2 \delta_\tau^2 + 4\big\|\vX_\perp^\tau \big\|_F^2  + 4\eta_x^2\left\| \vV_\perp^\tau\right\|_F^2. 
\end{align}
Now taking expectation about $\tau$ on both sides of \eqref{eq:bd-grad-Px3} and using \eqref{eq:bd-sum-x-lam-1}, \eqref{eq:x-perp2-1}, and \eqref{eq:v-perp4-1}, we have
\begin{align*}
&\, m \EE_\tau \left[ \big\|\vx_\avg^\tau - \prox_{\eta_x g}\big(\vx_\avg^\tau - \eta_x \nabla_\vx P(\vx_\avg^\tau, \vLam^\tau)\big)\big\|^2\right]\nonumber \\
\le &\, \frac{4}{T} \bigg[ \textstyle \frac{2mC_{0,T}(1-\rho)^2}{L(4\kappa+1)} +  \big( \eta_x^2 L^2(1+2\kappa^2) + 1 \big) \left(\frac{3(1-\rho)}{50 L^2(1+6\kappa^2)} \big\|\vV_\perp^{0}\big\|_F^2 + \frac{53m C_{0,T} (1-\rho)^2}{50L(4\kappa+1)} + \frac{3m}{50 L^2} \Delta_T\right) \\
 &\, \hspace{1cm} + \eta_x^2 \left( \textstyle \frac{3}{2(1-\rho)}\big\|\vV_\perp^{0}\big\|_F^2  + \frac{m(1+9\kappa^2)}{(1-\rho)^2} \Delta_T + 40mC_{0,T} L\kappa \right) + 2m \eta_x^2 \kappa^2 \Delta_T \bigg]. \nonumber
\end{align*}
Hence, dividing by $m \eta_x^2$ both sides of the inequality above yields
\begin{align}\label{eq:bd-grad-Px4}
&\, \textstyle \frac{1}{\eta_x^2}\EE_\tau \left[ \big\|\vx_\avg^\tau - \prox_{\eta_x g}\big(\vx_\avg^\tau - \eta_x \nabla_\vx P(\vx_\avg^\tau, \vLam^\tau)\big)\big\|^2\right]\nonumber \\
\le  &\, \textstyle \frac{4}{\eta_x^2 mT} \bigg[ \textstyle \frac{2mC_{0,T}(1-\rho)^2}{L(4\kappa+1)} +  \big( \eta_x^2 L^2(1+2\kappa^2) + 1 \big) \left(\frac{3(1-\rho)}{50 L^2(1+6\kappa^2)} \big\|\vV_\perp^{0}\big\|_F^2 + \frac{53m C_{0,T} (1-\rho)^2}{50L(4\kappa+1)} + \frac{3m}{50 L^2} \Delta_T\right) \nonumber\\
 &\,  \hspace{1.5cm}+ \eta_x^2 \left( \textstyle \frac{3}{2(1-\rho)}\big\|\vV_\perp^{0}\big\|_F^2  + \frac{m(1+9\kappa^2)}{(1-\rho)^2} \Delta_T +  40mC_{0,T} L\kappa  \right) + 2m \eta_x^2 \kappa^2 \Delta_T \bigg]\nonumber \\
 = &\, \textstyle \frac{4}{T}\Big[\textstyle C_{1,T} + 40C_{0,T} L\kappa +  \frac{ 53C_{0,T} L(1-\rho)^2(1+2\kappa^2)}{50(4\kappa+1)} + \frac{50 L C_{0,T}(1+6\kappa^2)}{(4\kappa +1)(1-\rho)^2} +  \frac{53L C_{0,T} (1+6\kappa^2)}{2(1-\rho)^2(4\kappa+1)} \Big] \nonumber\\
 \le &\, \textstyle \frac{4}{T}\Big[\textstyle C_{1,T} + \frac{156 C_{0,T} L \kappa}{(1-\rho)^2} \Big],
\end{align}
where the last inequality follows from $(1-\rho)^2 \le 1$ and $\frac{53(1+2\kappa^2)}{50(4\kappa+1)} + \frac{50(1+6\kappa^2)}{4\kappa+1}+\frac{53(1+6\kappa^2)}{2(4\kappa+1)} \le 116 \kappa$. Now we obtain \eqref{eq:opt-cond-bd-tau2} from \eqref{eq:bd-grad-Px4} by using the Jensen's inequality and complete the proof.
\end{proof}

\subsection{Complexity results}
In this subsection, we establish the complexity result of Algorithm~\ref{alg:dgdm} to produce a near-stationary point of \eqref{eq:min-max-Phi}.
\begin{definition}\label{def:opt-cond-phi-eps}
Given $\vareps>0$, a point $(\vX, \vLam)$ is an $\vareps$-stationary point of \eqref{eq:min-max-Phi} if for a certain $\eta>0$,
\begin{equation}\label{eq:opt-cond-phi-eps}
\textstyle \frac{1}{\eta}\left \| \vx_\avg - \prox_{\eta g}\big(\vx_\avg - \eta\nabla_\vx P(\vx_\avg,\vLam)\big)\right\| \le \vareps, \quad \frac{L}{\sqrt{m}} \|\vX_\perp\|_F \le \vareps,\quad \|\nabla_\vLam P(\vx_\avg,\vLam)\|_F \le \vareps.
\end{equation}
\end{definition}

\begin{theorem}\label{thm:up-bd-T}
Let $\vareps>0$ be given. Suppose  $\delta_t \le \frac{(1-\rho)^2}{8\kappa(1+t)}, \forall\, t \ge 0$ and $\|\vV^0_\perp\|_F^2 \le 2m L\kappa(1-\rho)$. Set
\begin{equation}\label{eq:upper-bd-T}
 T = \left\lceil \frac{1}{\vareps^2}\max\left\{\frac{64\big(10 L \kappa (\phi_0+1) + 1\big)}{(1-\rho)^2}, \ \frac{4096 L\kappa}{1-\rho},\ 800 \right\} \right\rceil, 
\end{equation}
where $\phi_0 = \textstyle \phi(\vx_\avg^{0}, \vLam^{0}) -  (p+g)(\vx^*)$. Let $\{(\vX^t, \vLam^t)\}_{t= 0}^T$ be generated from Alg.~\ref{alg:dgdm} with $\eta_x$ and $\eta_\lambda$ set to those in \eqref{eq:cond-stepsize}. Choose $\tau$ from $\{0,1,\ldots, T-1\}$ uniformly at random. Then under Assumptions~\ref{assump-func}--\ref{assump-W}, $(\vX^\tau, \vLam^\tau)$ is an $\vareps$-stationary point of \eqref{eq:min-max-Phi} in expectation.
\end{theorem}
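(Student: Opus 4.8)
The plan is to verify, in expectation over the uniformly random index $\tau$, each of the three stationarity measures in Definition~\ref{def:opt-cond-phi-eps} separately, using the bounds \eqref{eq:opt-cond-bd-tau1} and \eqref{eq:opt-cond-bd-tau2} together with a consensus-error bound derived from \eqref{eq:x-perp2-1}, and then to check that the prescribed $T$ in \eqref{eq:upper-bd-T} drives all three below $\vareps$. The first preparatory step is to control $\Delta_T = \sum_{t=0}^{T-1}\delta_t^2$. From the hypothesis $\delta_t \le \frac{(1-\rho)^2}{8\kappa(1+t)}$ and $\sum_{t=0}^\infty (1+t)^{-2} = \frac{\pi^2}{6} < 2$, I get $\Delta_T \le \frac{(1-\rho)^4}{64\kappa^2}\cdot 2 = \frac{(1-\rho)^4}{32\kappa^2}$, a quantity of order $\kappa^{-2}$.

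Next I simplify the constants $C_{0,T}$ and $C_{1,T}$ using the two initialization hypotheses. Substituting $\|\vV^0_\perp\|_F^2 \le 2mL\kappa(1-\rho)$ and the bound on $\Delta_T$ into the definition of $C_{0,T}$ in Lemma~\ref{lem:bd-sum-x-lam} makes its last two terms bounded by absolute constants, so $C_{0,T} \le \phi_0 + 2$, where $\phi_0 = \phi(\vx_\avg^0,\vLam^0) - (p+g)(\vx^*) \ge 0$. The same substitution into \eqref{eq:C1} shows the $\|\vV^0_\perp\|_F^2$-part of $C_{1,T}$ is $O\big(\frac{L\kappa}{1-\rho}\big)$ while its $\Delta_T$-part is $O(1)$ (the $\kappa^2/(1-\rho)^4$ growth of the bracket cancels against $\Delta_T = O\big((1-\rho)^4\kappa^{-2}\big)$); hence $C_{1,T} = O\big(\frac{L\kappa}{1-\rho}\big)$.

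With these simplifications I attack the three measures. For $\nabla_\vLam P$ I substitute the bounds on $C_{0,T}$, $\|\vV^0_\perp\|_F$ and $\Delta_T$ into \eqref{eq:opt-cond-bd-tau1}: the leading $\frac{7\sqrt{L\kappa C_{0,T}}}{(1-\rho)\sqrt{T}}$ term is controlled by the first entry $\frac{64(10L\kappa(\phi_0+1)+1)}{(1-\rho)^2}$ of the maximum, while the $\|\vV^0_\perp\|_F$ term and the $\frac{2\kappa}{\sqrt{T}}\sqrt{\Delta_T}$ term are absorbed by the second and third entries. For the primal prox-residual I use \eqref{eq:opt-cond-bd-tau2} with $\eta = \eta_x$ from \eqref{eq:cond-stepsize}; since $C_{1,T}+\frac{156 C_{0,T} L\kappa}{(1-\rho)^2} = O\big(\frac{L\kappa(\phi_0+1)}{(1-\rho)^2}\big)$, the choice of $T$ again makes the right-hand side at most $\vareps$. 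For the consensus error I use $\EE_\tau\|\vX_\perp^\tau\|_F^2 = \frac{1}{T}\sum_{t=0}^{T-1}\|\vX_\perp^t\|_F^2$, bound the sum by \eqref{eq:x-perp2-1}, apply Jensen's inequality, and multiply by $\frac{L}{\sqrt{m}}$; the resulting bound is $O\big(\frac{1}{\sqrt{T}}\sqrt{\frac{L(\phi_0+1)}{\kappa}}\big)$, dominated by the previous two and thus also at most $\vareps$ under \eqref{eq:upper-bd-T}.

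The main obstacle is purely the constant bookkeeping: one must verify, term by term, that the three additive pieces of the $\nabla_\vLam P$ bound and the composite primal bound are each pushed below $\vareps$ by the appropriate entry of the three-way maximum, exploiting $\kappa \ge 1$ and $(1-\rho)\le 1$ to discard subdominant factors. No new inequality is needed beyond Jensen's inequality and the square-sum estimates already established; the delicate point is only that the three entries of the max (with the generous numerical constants $64$, $4096$, $800$) are calibrated so that \emph{all three} conditions in \eqref{eq:opt-cond-phi-eps} hold simultaneously in expectation.
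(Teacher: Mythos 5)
Your proposal follows essentially the same route as the paper's own proof: bound $\Delta_T \le \frac{(1-\rho)^4}{32\kappa^2}$, use the two initialization hypotheses to reduce $C_{0,T}$ to $O(\phi_0+1)$ and $C_{1,T}$ to $O\big(\tfrac{L\kappa}{1-\rho}\big)$, substitute these into \eqref{eq:opt-cond-bd-tau1}, \eqref{eq:opt-cond-bd-tau2}, and the consensus bound obtained from \eqref{eq:x-perp2-1} via Jensen's inequality, and then check that the three-way maximum in \eqref{eq:upper-bd-T} dominates each resulting bound. One minor inaccuracy worth fixing: your intermediate claim $C_{0,T}\le\phi_0+2$ silently assumes $\frac{5(1-\rho)^2}{32 L\kappa}\le 1$, which can fail when $L\kappa$ is small; the paper instead keeps the exact bound $C_{0,T}\le\phi_0+1+\frac{1}{6L\kappa}$, which is harmless since everywhere $C_{0,T}$ enters the final estimates it is multiplied by (at most a constant times) $L\kappa$, so this piece contributes only an absolute constant absorbed by the numerical constants in $T$ --- your argument goes through verbatim after this cosmetic repair.
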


\begin{proof}
From the definition of $\Delta_T$ in Lemma~\ref{lem:bd-sum-x-lam}, it follows that $\Delta_T \le \sum_{t=0}^\infty \frac{(1-\rho)^4}{64\kappa^2(1+t)^2} \le \frac{(1-\rho)^4}{32\kappa^2}$. Hence, it holds $C_{0,T} \le \phi_0 + 1 +  \frac{1}{6 L\kappa} $ and $C_{1,T} \le \frac{8 L\kappa}{1-\rho} + 1$ by the assumption on $\|\vV^0_\perp\|_F^2$, the definitions of $C_{0,T}$ and $C_{1,T}$ in Lemma~\ref{lem:bd-sum-x-lam} and Eqn. \eqref{eq:C1}, and the definition of $\phi_0$. With these, we have from \eqref{eq:opt-cond-bd-tau} that
\begin{subequations}\label{eq:opt-cond-bd-tau-2}
\begin{align}
&\EE_\tau \|\nabla_\vLam P(\vx_\avg^\tau, \vLam^\tau)\|_F \le  \textstyle \frac{7\sqrt{L \kappa (\phi_{0} + 1) + \frac{1}{6} }}{(1-\rho)\sqrt{T}} + \frac{(1-\rho)\sqrt{L\kappa}}{5\sqrt{T}} + \frac{(1-\rho)^2}{\sqrt{8T}}, \label{eq:opt-cond-bd-tau1-2}\\[0.1cm]
&\textstyle \frac{1}{\eta_x}\EE_\tau \big\|\vx_\avg^\tau - \prox_{\eta_x g}\big(\vx_\avg^\tau - \eta_x \nabla_\vx P(\vx_\avg^\tau, \vLam^\tau)\big)\big\| \le  \textstyle \frac{2}{\sqrt{T}}\sqrt{ \frac{8 L\kappa}{1-\rho} + 1 + \frac{156 L \kappa(\phi_0+1)+26}{(1-\rho)^2}} . \label{eq:opt-cond-bd-tau2-2} 
\end{align}
In addition, \eqref{eq:x-perp2-1} indicates $\sum_{t=0}^{T-1} \|\vX_\perp^t\|_F^2 \le \frac{3m\kappa(1-\rho)^2}{25 L(1+6\kappa^2)} + \frac{53m (1-\rho)^2 (\phi_0 + 1 + \frac{1}{6 L\kappa})}{50L(4\kappa+1)} + \frac{3m}{50 L^2} \frac{(1-\rho)^4}{32\kappa^2}$, and thus
\begin{equation}
\textstyle \EE_\tau \|\vX^\tau_\perp\|_F \le \sqrt{\frac{m}{L T}} \left(\frac{\sqrt{3\kappa}(1-\rho)}{5\sqrt{1+6\kappa^2}}+ \frac{\sqrt{53(\phi_0 + 1 + \frac{1}{6 L\kappa})}(1-\rho)}{5\sqrt{2+8\kappa}} + \frac{\sqrt{3}(1-\rho)^2}{40\kappa\sqrt{L}}\right). \label{eq:x-perp2-1-2}
\end{equation}
\end{subequations}
Now by the choice of $T$, it is straightforward to verify from \eqref{eq:opt-cond-bd-tau1-2} -- \eqref{eq:x-perp2-1-2} that 
$$\textstyle \frac{1}{\eta_x}\EE_\tau \big\|\vx_\avg^\tau - \prox_{\eta_x g}\big(\vx_\avg^\tau - \eta_x \nabla_\vx P(\vx_\avg^\tau, \vLam^\tau)\big)\big\| \le\vareps, \ \EE_\tau \|\nabla_\vLam P(\vx_\avg^\tau, \vLam^\tau)\|_F \le \vareps, \ \frac{L}{\sqrt{m}}\EE_\tau \|\vX^\tau_\perp\|_F \le \vareps,$$ namely, $(\vX^\tau, \vLam^\tau)$ is an $\vareps$-stationary point of \eqref{eq:min-max-Phi} in expectation.
\end{proof}

\begin{remark}\label{rm:complexity}
We make a few remarks about Theorem~\ref{thm:up-bd-T}. First, the complexity has the optimal dependence on $\vareps$. Also, the dependence on $\rho$ is the best known for a decentralized method that does not perform multi-communication per iteration on solving composite nonconvex problems, such as SONATA \cite{scutari2019distributed}. Second, the assumed condition $\|\vV^0_\perp\|_F^2 \le 2m L\kappa(1-\rho)$ can hold if $\kappa$ is big. Otherwise, we can perform multi-communication in the initial step. Third, the theorem gives the number of communication rounds and $\vx$-gradient evaluations in order to produce an $\vareps$-stationary point of \eqref{eq:min-max-Phi} in expectation. If each local $\vy$-subproblem has a closed-form solution, such as for the distributionally robust logistic regression that we will test in the next section,  then the total number of $\vy$-gradient evaluations is also $T$. In general, we need to apply an iterative solver to find each local approximate solution. Below, we estimate the total number of $\vy$-gradients for the general case.
\end{remark}

An accelerated proximal gradient method (APG) that has accelerated convergence for strongly-convex problems, e.g., the methods in \cite{nesterov2013gradient, lin2015adaptive}, can be applied to find $\vy_i^t$ for each $t\ge0$ and each $i\in [m]$. To guarantee the stationarity condition required in Line~7 of Algorithm~\ref{alg:dgdm}, we apply the modified version of the APG in \cite{lin2015adaptive}, which is given in  \cite[Algorithm~2]{xu2022first-O1}. For simplicity, we use the non-adaptive version (i.e., no line search by assuming the knowledge of the smoothness constant) to approximately solve $\max_{\vy} d_i^t (\vy)$, or equivalently $\min_\vy (-d_i^t) (\vy)$, where $d_i^t$ is defined in \eqref{eq:def-d-i}. Let $\vy_i^{t\star}$ be the unique maximizer of $d_i^t$. Notice $d_i^t(\vy_i^{t\star}) - d_i^t(\vy_i) \ge \frac{\mu}{2}\|\vy_i^{t\star} - \vy_i\|^2, \forall\, \vy_i \in \dom(h)$ by the $\mu$-strong concavity of $d_i^t$. Hence, if the non-adaptive version of Algorithm~2 in \cite{xu2022first-O1} is applied to solve $\min_\vy (-d_i^t) (\vy)$ and starts from $\vy_i^{t-1}$, then from \cite[Theorem~2.2]{xu2022first-O1}, it can produce a point $\vy_i^{t-1, s}$ after $s$ iterations such that
\begin{equation}\label{eq:stat-cond-viol-y}
\dist\big(\vzero, \partial d_i^t(\vy_i^{t-1, s}) \big) \le  4\sqrt{L_y} \sqrt{d_i^t(\vy_i^{t\star}) - d_i^t(\vy_i^{t-1})} \textstyle \left(1-\frac{1}{\sqrt{\kappa_y}}\right)^{\frac{s}{2}},
\end{equation} 
where $L_y$ and $\kappa_y$ are respectively given in \eqref{eq:smooth-y} and \eqref{eq:kappa}. Therefore, to produce $\vy_i^t$ such that $\dist\big(\vzero, \partial d_i^t(\vy_i^{t}) \big) \le \delta_t$, it is sufficient to run $s_t$ APG iterations, if
$
s_t \ge {\ln \frac{16 L_y \big(d_i^t(\vy_i^{t\star}) - d_i^t(\vy_i^{t-1})\big)}{\delta_t^2}} \Big / { \ln \frac{1}{1-{1}/{\sqrt{\kappa_y}}}}. 
$
Hence, by the fact $\ln\frac{1}{1-x} \ge x, \forall x \in (0, 1)$, we can set
\begin{equation}\label{eq:APG-s-t}
s_t = \left\lceil \sqrt{\kappa_y} \ln \frac{16 L_y \big(d_i^t(\vy_i^{t\star}) - d_i^t(\vy_i^{t-1})\big)}{\delta_t^2}\right\rceil. 
\end{equation}

We show an upper bound of $d_i^t(\vy_i^{t\star}) - d_i^t(\vy_i^{t-1})$ as follows. Its proof is given in the appendix.

\begin{lemma}\label{lem:bd-dit}
Let $\vY^{t\star} = \argmax_\vY \Phi(\vX^t, \vLam^t, \vY)$, i.e., $\vy_i^{t\star} = \argmax_{\vy} d_i^t(\vy), \forall\, t\ge0, \, i\in [m]$. Then
\begin{equation}\label{eq:bd-d-i-t-star}
\begin{aligned}
&\, d_i^t(\vy_i^{t\star}) - d_i^t(\vy_i^{t-1}) 
\le 
\textstyle \left(L(11+\sqrt{m}) + 22 m \kappa^2 (2L + L\sqrt{m} + 2)\right) \frac{C_{0,t-1}(1-\rho)^2}{L(4\kappa+1)}  \\ 
&\, \textstyle \hspace{0.5cm}+ \frac{(1-\rho)\big(\frac{L}{2}+\kappa^2(2L + L\sqrt{m} + 2)\big)}{2L^2(1+6\kappa^2)} \big\|\vV_\perp^{0}\big\|_F^2 
\textstyle  + \frac{m\big(\frac{L}{2}+\kappa^2(2L + L\sqrt{m} + 2)\big)}{2L^2} \Delta_{t-1}  +  \left(L + \frac{L\sqrt{m}}{2} + 1\right) \frac{\delta_{t-1}^2}{\mu^2} + \frac{\delta_{t-1}^2}{2},
\end{aligned}
\end{equation}
for all $t\ge1$, where $C_{0, t}$ and $\Delta_t$ are defined in Lemma~\ref{lem:bd-sum-x-lam}.
\end{lemma}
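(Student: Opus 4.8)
The plan is to control the current suboptimality of the previous iterate, $d_i^t(\vy_i^{t\star}) - d_i^t(\vy_i^{t-1})$, by how much the $i$-th subproblem changes between iterations $t-1$ and $t$, together with the approximation tolerance $\delta_{t-1}$, and then to convert the resulting one-step quantities into the cumulative bounds already established in Lemma~\ref{lem:bd-sum-x-lam}. The starting point is the elementary estimate for a concave function: for any $\vxi\in\partial d_i^t(\vy_i^{t-1})$, concavity of $d_i^t$ gives $d_i^t(\vy_i^{t\star}) - d_i^t(\vy_i^{t-1}) \le \langle\vxi,\,\vy_i^{t\star}-\vy_i^{t-1}\rangle \le \dist\big(\vzero,\partial d_i^t(\vy_i^{t-1})\big)\,\|\vy_i^{t\star}-\vy_i^{t-1}\|$. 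I would then apply Young's inequality (with a weight tuned to reproduce the $\frac{L}{2}$ factors appearing in the statement) to decouple the stationarity factor from the displacement factor and bound each separately.

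For the stationarity factor I would compare $\partial d_i^t$ with $\partial d_i^{t-1}$ at the common point $\vy_i^{t-1}$. Since $d_i^t$ and $d_i^{t-1}$ differ only through $\vx_i$ (inside $\nabla_\vy f_i$) and through $\tilde\vlam_i$, selecting the same element of $\partial h(\vy_i^{t-1})$ in both subdifferentials and using $\dist\big(\vzero,\partial d_i^{t-1}(\vy_i^{t-1})\big)\le\delta_{t-1}$ (the stopping criterion of Line~7) yields, via the $L$-smoothness of $f_i$ and $\|\vW-\vI\|_2\le2$, the bound $\dist\big(\vzero,\partial d_i^t(\vy_i^{t-1})\big) \le \delta_{t-1} + L\|\vx_i^t-\vx_i^{t-1}\| + L\sqrt m\,\|\vLam^t-\vLam^{t-1}\|_F$. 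For the displacement factor I would use the triangle inequality $\|\vy_i^{t\star}-\vy_i^{t-1}\| \le \|\vy_i^{t\star}-\vy_i^{(t-1)\star}\| + \|\vy_i^{(t-1)\star}-\vy_i^{t-1}\|$, bounding the first piece by the Lipschitz continuity of $S_\Phi$ in \eqref{eq:lip-S2} applied to $(\vX^t,\vLam^t)$ versus $(\vX^{t-1},\vLam^{t-1})$ (this is the source of the $\kappa^2$ factors) and the second by Proposition~\ref{prop:error-bd-vYt}, namely $\|\vy_i^{(t-1)\star}-\vy_i^{t-1}\|\le\delta_{t-1}/\mu$ (the source of the $\delta_{t-1}^2/\mu^2$ terms).

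Having reduced everything to the one-step quantities $\|\vX^t-\vX^{t-1}\|_F^2$ and $\|\vLam^t-\vLam^{t-1}\|_F^2$ together with $\delta_{t-1}^2$, I would invoke the second inequality of \eqref{eq:x-perp} to write $\|\vX^t-\vX^{t-1}\|_F^2 \le 2\|\vX^t-\widetilde\vX^{t-1}\|_F^2 + 8\|\vX_\perp^{t-1}\|_F^2$, and then bound each single-step term by the entire trajectory sum to which it belongs: a single summand never exceeds a sum of nonnegative terms, so $\|\vX^t-\widetilde\vX^{t-1}\|_F^2$, $\|\vLam^t-\vLam^{t-1}\|_F^2$ and $\|\vX_\perp^{t-1}\|_F^2$ are dominated by the right-hand sides of \eqref{eq:bd-sum-x-lam-2}, \eqref{eq:bd-sum-x-lam-3} and \eqref{eq:x-perp2-1} taken at horizon $t$. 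This produces the $C_{0,t}$-, $\Delta_t$- and $\|\vV_\perp^0\|_F^2$-dependence; I would finally pass from $C_{0,t},\Delta_t$ to $C_{0,t-1},\Delta_{t-1}$ using $\Delta_t=\Delta_{t-1}+\delta_{t-1}^2$ and $C_{0,t}=C_{0,t-1}+\frac{5\kappa}{L(1-\rho)^2}\delta_{t-1}^2$, letting the leftover $\delta_{t-1}^2$ pieces be absorbed into the explicit $\delta_{t-1}^2$ terms of the statement.

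The main obstacle is purely the constant accounting rather than any conceptual difficulty: the several Young's inequalities (whose weights must be chosen consistently to yield the advertised coefficients $\frac{L}{2}+\kappa^2(2L+L\sqrt m+2)$ and $L(11+\sqrt m)+22m\kappa^2(\cdots)$), the row-versus-Frobenius passage $\|\vx_i^t-\vx_i^{t-1}\|\le\|\vX^t-\vX^{t-1}\|_F$, the factor $\|\vW-\vI\|_2\le2$ on the $\tilde\vlam$ term, and the $\kappa$-dependent denominators $\frac{1}{1+6\kappa^2}$ and $\frac{1}{4\kappa+1}$ inherited from Lemma~\ref{lem:bd-sum-x-lam} must all be combined and crudely majorized without degrading the stated orders in $L$, $m$, $\kappa$ and $1-\rho$. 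This is routine but lengthy, which is presumably why the full computation is relegated to the appendix.
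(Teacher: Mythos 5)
Your proposal is correct and follows essentially the same route as the paper's proof: concavity of $d_i^t$ combined with the iteration-$(t-1)$ stopping criterion to reduce everything to the one-step differences $\|\vx_i^t-\vx_i^{t-1}\|$ and $\|\tilde\vlam_i^t-\tilde\vlam_i^{t-1}\|$, Young's inequality, the splitting of $\|\vy_i^{t\star}-\vy_i^{t-1}\|$ through $\vy_i^{(t-1)\star}$ via \eqref{eq:lip-S2} and Proposition~\ref{prop:error-bd-vYt}, and finally \eqref{eq:x-perp} together with the cumulative bounds of Lemma~\ref{lem:bd-sum-x-lam}. If anything, your bookkeeping is slightly more careful than the paper's: the single terms $\|\vX^t-\widetilde\vX^{t-1}\|_F^2$, $\|\vLam^t-\vLam^{t-1}\|_F^2$, $\|\vX_\perp^{t-1}\|_F^2$ require invoking Lemma~\ref{lem:bd-sum-x-lam} at horizon $T=t$, and your explicit conversion from $C_{0,t},\Delta_t$ back to $C_{0,t-1},\Delta_{t-1}$ (absorbing the leftover $\delta_{t-1}^2$ pieces) handles an off-by-one index that the paper's appendix glosses over.
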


With \eqref{eq:APG-s-t} and \eqref{eq:bd-d-i-t-star}, we are ready to bound the total number of $\vy$-gradients.
\begin{theorem}[Total number of $\vy$-gradients]\label{thm:y-grad}
Under the same assumptions of Theorem~\ref{thm:up-bd-T}, set $\delta_t = \frac{(1-\rho)^2}{8\kappa(1+t)}, \forall\, t \ge 0$. 
Then to produce $(\vX^\tau, \vLam^\tau)$, the total number of $\vy$-gradient evaluations satisfies
$$T_y \le \textstyle \left\lceil \sqrt{\kappa_y} \ln \frac{1024 \kappa L_y \big(d_i^{0\star} - d_i^0(\vy_i^{-1})\big)}{(1-\rho)^4} + T \sqrt{\kappa_y} \ln \frac{1024\kappa L_y D (T+1)^2}{(1-\rho)^4} + T + 1 \right\rceil, \forall\, i\in [m],$$ 
where $\vy_i^{-1}$ is the initial point that is used to solve $d_i^{0\star}:=\max_{\vy} d_i^0(\vy)$, and 
\begin{equation}\label{eq:def-D}
\begin{aligned}
D:= &\, \textstyle \left(L(11+\sqrt{m}) + 22 m \kappa^2 (2L + L\sqrt{m} + 2)\right) \frac{(\phi_0 + 1 + \frac{1}{6L\kappa})(1-\rho)^2}{L(4\kappa+1)}   \\
& \, \textstyle + \frac{m\kappa(1-\rho)^2\big(\frac{L}{2}+\kappa^2(2L + L\sqrt{m} + 2)\big)}{L(1+6\kappa^2)} + \frac{m\big(\frac{L}{2}+\kappa^2(2L + L\sqrt{m} + 2)\big)(1-\rho)^4}{64L^2 \kappa^2}   + \frac{ (L + \frac{L\sqrt{m}}{2} + 1)(1-\rho)^4}{64\mu^2 \kappa^2} + \frac{(1-\rho)^4}{128\kappa^2}.
\end{aligned}
\end{equation}
\end{theorem}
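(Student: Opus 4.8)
The plan is to bound the total count $T_y=\sum_{t=0}^{T}s_t$, where $s_t$ is the number of APG inner iterations run at the $t$-th outer step, as prescribed by \eqref{eq:APG-s-t}. Since there are $T+1$ outer steps ($t=0,1,\ldots,T$) and each inner iteration consumes exactly one $\vy$-gradient, my strategy is threefold: (i) obtain a single bound on $s_t$ that is uniform in $t$ for all $t\ge1$; (ii) treat the initial step $t=0$ separately, since it is solved starting from the externally supplied $\vy_i^{-1}$; and (iii) sum the $T+1$ contributions, absorbing the ceilings into an additive $T+1$.

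The first and main step is to produce a uniform-in-$t$ bound on the inner accuracy target $d_i^t(\vy_i^{t\star})-d_i^t(\vy_i^{t-1})$ valid for every $t\ge1$. This is exactly what Lemma~\ref{lem:bd-dit} supplies: the right-hand side of \eqref{eq:bd-d-i-t-star} is affine in the four quantities $C_{0,t-1}$, $\|\vV_\perp^0\|_F^2$, $\Delta_{t-1}$, and $\delta_{t-1}^2$. Under the hypotheses inherited from Theorem~\ref{thm:up-bd-T} (namely $\delta_t\le\frac{(1-\rho)^2}{8\kappa(1+t)}$ together with $\|\vV_\perp^0\|_F^2\le 2mL\kappa(1-\rho)$), each of these four quantities is bounded independently of $t$: as established inside the proof of Theorem~\ref{thm:up-bd-T}, $\Delta_{t}\le\frac{(1-\rho)^4}{32\kappa^2}$ and $C_{0,t}\le\phi_0+1+\frac{1}{6L\kappa}$, while $\delta_{t-1}^2\le\delta_0^2=\frac{(1-\rho)^4}{64\kappa^2}$ by monotonicity of $\{\delta_t\}$. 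Substituting these four uniform bounds term-by-term into \eqref{eq:bd-d-i-t-star} reproduces, line for line, the constant $D$ defined in \eqref{eq:def-D}; hence $d_i^t(\vy_i^{t\star})-d_i^t(\vy_i^{t-1})\le D$ for all $t\ge1$ and all $i\in[m]$.

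With this uniform bound in hand, I would feed it and the explicit $\delta_t=\frac{(1-\rho)^2}{8\kappa(1+t)}$ into \eqref{eq:APG-s-t}. For $t\ge1$ this gives $s_t\le\bigl\lceil\sqrt{\kappa_y}\,\ln\frac{16L_y D}{\delta_t^2}\bigr\rceil$, and for $t=0$ the analogous bound with $D$ replaced by $d_i^{0\star}-d_i^0(\vy_i^{-1})$. Inserting the closed form of $\delta_t$ turns each denominator $\delta_t^2$ into a multiple of $(1+t)^{-2}$, producing the logarithmic arguments displayed in the statement. Summing over the $T+1$ steps and using $\lceil a\rceil\le a+1$ on each summand peels off the additive $T+1$; then bounding every $t\ge1$ logarithm by its largest value via $(1+t)^2\le(T+1)^2$ collapses the sum over $t=1,\ldots,T$ into $T\sqrt{\kappa_y}\,\ln\frac{1024\kappa L_y D(T+1)^2}{(1-\rho)^4}$, while the $t=0$ term contributes $\sqrt{\kappa_y}\,\ln\frac{1024\kappa L_y\,(d_i^{0\star}-d_i^0(\vy_i^{-1}))}{(1-\rho)^4}$. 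Together these yield the claimed bound on $T_y$.

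The genuinely heavy lifting lives inside Lemma~\ref{lem:bd-dit}, whose proof is deferred to the appendix; for the present theorem the only delicate point is confirming that the per-step descent target $d_i^t(\vy_i^{t\star})-d_i^t(\vy_i^{t-1})$ does \emph{not} grow with $t$. I expect this uniformity to be the crux: it is not automatic, since naively the gap could accumulate over iterations. It is rescued precisely by the square-summability of $\{\delta_t\}$ (keeping $\Delta_t$ bounded) combined with the uniform ceiling on $C_{0,t}$, both already secured under the standing hypotheses. Once uniformity is granted, the remaining work—substituting into the APG iteration count, invoking monotonicity of the logarithm, and bounding the ceiling functions—is entirely routine.
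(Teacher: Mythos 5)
Your proposal is correct and follows essentially the same route as the paper's own proof: a uniform bound $d_i^t(\vy_i^{t\star})-d_i^t(\vy_i^{t-1})\le D$ obtained by substituting the bounds on $C_{0,t}$, $\Delta_t$, $\delta_t^2$, and $\|\vV_\perp^0\|_F^2$ from Theorem~\ref{thm:up-bd-T} into Lemma~\ref{lem:bd-dit}, followed by summing the APG iteration counts \eqref{eq:APG-s-t} with the $t=0$ step treated separately and the ceilings absorbed into the additive $T+1$. Your step of bounding each $t\ge 1$ logarithm via $(1+t)^2\le(T+1)^2$ is the same as the paper's use of monotonicity of $\delta_t$ to bound every term by its value at $t=T$.
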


\begin{proof}
From the proof of Theorem~\ref{thm:up-bd-T}, it holds $\Delta_t \le \frac{(1-\rho)^4}{32\kappa^2},\forall\, t\ge 0$ and $C_{0,t} \le \phi_0 + 1 + \frac{1}{6L\kappa}$. In addition, it is obvious that $\delta_t^2 \le  \frac{(1-\rho)^4}{64\kappa^2}$. Hence, substituting these upper bounds into \eqref{eq:bd-d-i-t-star}, we can easily obtain $d_i^t(\vy_i^{t\star}) - d_i^t(\vy_i^{t-1}) \le D, \forall\, t\ge 1, i\in [m]$. Hence by \eqref{eq:APG-s-t}, for each $i$, the total number of $\vy$-gradient evaluations is upper bounded by 
\begin{align*}
T_y = \textstyle \sum_{t = 0}^T s_t \le &\, \textstyle \sqrt{\kappa_y} \ln \frac{16 L_y \big(d_i^{0\star} - d_i^0(\vy_i^{-1})\big)}{\delta_0^2} + \sum_{t=1}^T \sqrt{\kappa_y} \ln \frac{16L_y D}{\delta_t^2} + T + 1\cr
\le &\, \textstyle \sqrt{\kappa_y} \ln \frac{16 L_y \big(d_i^{0\star} - d_i^0(\vy_i^{-1})\big)}{\delta_0^2} + T\sqrt{\kappa_y} \ln \frac{16L_y D}{\delta_T^2} + T + 1\cr
= &\, \textstyle \sqrt{\kappa_y} \ln \frac{1024 \kappa L_y \big(d_i^{0\star} - d_i^0(\vy_i^{-1})\big)}{(1-\rho)^4} + T \sqrt{\kappa_y} \ln \frac{1024\kappa L_y D (T+1)^2}{(1-\rho)^4} + T + 1,
\end{align*}
which completes the proof.
\end{proof}

\begin{remark}\label{rm:complexity-result}
By Theorems~\ref{thm:up-bd-T} and \ref{thm:y-grad}, Algorithm~\ref{alg:dgdm} can produce an $\vareps$-stationary point of \eqref{eq:min-max-Phi} in expectation,  by $O\big(\frac{L\kappa}{\vareps^2(1-\rho)^2}\big)$ communication rounds  and $\vx$-gradient evaluations and  $O\big(\frac{L\kappa\sqrt{\kappa_y}}{\vareps^2(1-\rho)^2} \ln \frac{\kappa}{\vareps}\big)$ $\vy$-gradient evaluations.
\end{remark}

\subsection{Relation between the stationarity for \eqref{eq:min-max-prob-dec} and \eqref{eq:min-max-Phi}}
In this subsection, we show that a (near) stationary point of $\phi$ is also a (near) stationary point of $p+g$, where $p(\cdot)$ is defined in \eqref{eq:def-p} and $\phi(\cdot)$ defined in \eqref{eq:def-P}. With these, we are able to establish the relation between the stationarity for \eqref{eq:min-max-Phi} and the original formulation \eqref{eq:min-max-prob-dec}. Thus the results in the previous subsection will translate to complexity results to produce a near-stationary point of \eqref{eq:min-max-prob-dec}. The lemma below establishes the relation for the exact case.


\begin{lemma}
If $(\bar\vx, \bar\vLam)$ is a stationary point of $\phi$ defined in \eqref{eq:def-P}, i.e., for a certain $\eta>0$,
\begin{equation}\label{eq:opt-cond-phi}
\bar\vx = \prox_{\eta g}\big(\bar\vx - \eta\nabla_\vx P(\bar\vx,\bar\vLam)\big),\quad \nabla_\vLam P(\bar\vx,\bar\vLam) = \vzero,
\end{equation}
then $\bar\vx$ is also a stationary point of $p+g$, i.e., $\bar\vx = \prox_{\eta g}\big(\bar\vx - \eta\nabla p(\bar\vx)\big)$, where $p(\cdot)$ is defined in \eqref{eq:def-p}.
\end{lemma}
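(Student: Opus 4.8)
The plan is to relate the two stationarity systems by showing that the dual condition $\nabla_\vLam P(\bar\vx,\bar\vLam)=\vzero$ forces the inner maximizer to be consensual, after which $\nabla_\vx P(\bar\vx,\bar\vLam)$ collapses exactly to $\nabla p(\bar\vx)$. First I would set $\widehat\vY:=S_\Phi(\vone\bar\vx^\top,\bar\vLam)$ with rows $\hat\vy_1,\ldots,\hat\vy_m$ and invoke Danskin's theorem, exactly as in \eqref{eq:grad-P}, to obtain $\nabla_\vLam P(\bar\vx,\bar\vLam)=-\frac{L}{2\sqrt m}(\vW-\vI)\widehat\vY$ and $\nabla_\vx P(\bar\vx,\bar\vLam)=\frac1m\sum_{i=1}^m\nabla_\vx f_i(\bar\vx,\hat\vy_i)$. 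The dual stationarity in \eqref{eq:opt-cond-phi} then yields $(\vW-\vI)\widehat\vY=\vzero$, and since $\Null(\vW-\vI)=\Span\{\vone\}$ by Assumption~\ref{assump-W}(iii), each column of $\widehat\vY$ lies in $\Span\{\vone\}$; equivalently, all rows of $\widehat\vY$ coincide, say $\hat\vy_1=\cdots=\hat\vy_m=:\bar\vy$.

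Next I would write the first-order optimality condition for $\widehat\vY$, the analogue of \eqref{eq:opt-S-Phi}: for each $i\in[m]$ there exists $\vzeta_i\in\partial h(\bar\vy)$ such that $\frac1m\big(\nabla_\vy f_i(\bar\vx,\bar\vy)-\vzeta_i\big)-\frac{L}{2\sqrt m}\vr_i=\vzero$, where $\vr_i$ denotes the $i$-th row of $(\vW-\vI)^\top\bar\vLam$. The decisive step is to sum these $m$ identities over $i$: the coupling term cancels because $\sum_{i=1}^m\vr_i=\vone^\top(\vW-\vI)^\top\bar\vLam=\big((\vW-\vI)\vone\big)^\top\bar\vLam=\vzero$, using $\vW\vone=\vone$ (a consequence of Assumption~\ref{assump-W}(iii)). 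Setting $\bar\vzeta:=\frac1m\sum_{i=1}^m\vzeta_i$, which lies in $\partial h(\bar\vy)$ by convexity of the subdifferential, the sum reduces to $\frac1m\sum_{i=1}^m\nabla_\vy f_i(\bar\vx,\bar\vy)-\bar\vzeta=\vzero$.

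This last identity is precisely the optimality condition for $\bar\vy$ to solve $\max_\vy\{\frac1m\sum_{i=1}^m f_i(\bar\vx,\vy)-h(\vy)\}$, whose maximizer is unique by the $\mu$-strong concavity in Assumption~\ref{assump-func}; hence $\bar\vy$ is that maximizer. Applying Danskin's theorem once more to the definition of $p$ in \eqref{eq:def-p} gives $\nabla p(\bar\vx)=\frac1m\sum_{i=1}^m\nabla_\vx f_i(\bar\vx,\bar\vy)=\nabla_\vx P(\bar\vx,\bar\vLam)$. Substituting this equality into the primal stationarity $\bar\vx=\prox_{\eta g}(\bar\vx-\eta\nabla_\vx P(\bar\vx,\bar\vLam))$ of \eqref{eq:opt-cond-phi} immediately produces $\bar\vx=\prox_{\eta g}(\bar\vx-\eta\nabla p(\bar\vx))$. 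I expect the only nontrivial point to be the summation-cancellation step, which is exactly where the consensus structure $\vW\vone=\vone$ of the mixing matrix does the real work; the remaining arguments are routine applications of Danskin's theorem and uniqueness of the strongly concave maximizer.
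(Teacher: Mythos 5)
Your proof is correct, and it shares the paper's opening and closing moves: Danskin's theorem for $\nabla P(\bar\vx,\bar\vLam)$, the deduction that $\nabla_\vLam P(\bar\vx,\bar\vLam)=\vzero$ forces $S_\Phi(\vone\bar\vx^\top,\bar\vLam)$ to be consensual via $\Null(\vW-\vI)=\Span\{\vone\}$, and the final substitution of $\nabla_\vx P(\bar\vx,\bar\vLam)=\nabla p(\bar\vx)$ into the proximal fixed-point condition. Where you genuinely diverge is the middle step that identifies the common row $\bar\vy$ as the maximizer defining $p(\bar\vx)$. The paper does this with a value-comparison argument requiring no subdifferential calculus: since $\bar\vY$ maximizes $\Phi(\vone\bar\vx^\top,\bar\vLam,\cdot)$ over all matrices, it in particular dominates every consensual matrix $\vone\vy^\top$, and on those the coupling term vanishes because $(\vW-\vI)\vone\vy^\top=\vzero$, so $\bar\vy_\avg$ maximizes $\frac{1}{m}\sum_{i=1}^m f_i(\bar\vx,\cdot)-h(\cdot)$ directly. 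You instead write the per-agent first-order conditions (the analogue of \eqref{eq:opt-S-Phi}), sum over $i$, and cancel the multiplier rows using $\vone^\top(\vW-\vI)^\top\bar\vLam=\vzero$ --- the same algebraic fact $(\vW-\vI)\vone=\vzero$, deployed on the multiplier side rather than on the test points --- then average the subgradients (valid because they all lie in the single convex set $\partial h(\bar\vy)$, which is exactly why the consensus step must come first, as you correctly arrange) and invoke sufficiency of the first-order condition for the strongly concave maximization. Both routes are sound; the paper's is marginally more elementary, while yours has the merit of being precisely the template the paper itself uses later, in the proof of Lemma~\ref{lem:rel-stat}, for the inexact version of this statement, where plain value comparison no longer suffices and the subgradient bookkeeping you set up here becomes unavoidable.
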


\begin{proof}
Let $\bar\vY=S_\Phi(\vone\bar\vx^\top, \bar\vLam)$. Then $\nabla P(\bar\vx, \bar\vLam) = \left(\frac{1}{m}\sum_{i=1}^m \nabla_\vx f_i(\bar\vx, \bar\vy_i),\ -\frac{L}{2\sqrt{m}}(\vW-\vI)\bar\vY\right)$.
Hence, we have from $\nabla_\vLam P(\bar\vx,\bar\vLam) = \vzero$ that $\bar\vY = \frac{1}{m}\vone\vone^\top \bar\vY = \vone\bar\vy_\avg^\top$, and by the definition of $\bar\vY$, it holds
$\Phi(\vone\bar\vx^\top, \bar\vLam, \vone\bar\vy_\avg^\top) \ge \Phi(\vone\bar\vx^\top, \bar\vLam, \vone\vy^\top), \forall\, \vy\in\dom(h)$. Notice $(\vW-\vI) \vone \vy^\top = \vzero$. We have $\bar\vy_\avg = \argmax_\vy \frac{1}{m}\sum_{i=1}^m f_i(\bar\vx, \vy) - h(\vy)$. Thus $\nabla p(\bar\vx) = \frac{1}{m}\sum_{i=1}^m \nabla_\vx f_i(\bar\vx, \bar\vy_\avg) = \nabla_\vx P(\bar\vx,\bar\vLam)$. Therefore, the first condition in \eqref{eq:opt-cond-phi} reduces to $\bar\vx = \prox_{\eta g}\big(\bar\vx - \eta\nabla p(\bar\vx)\big)$, and this completes the proof.
\end{proof}


The following lemma will be used to show the relation for the case of near stationarity.
\begin{lemma}\label{lem:lam_min}
Under Assumption~\ref{assump-W}, we have that the eigenvalues of $(\vW-\vI)^\top(\vW-\vI)$ restricted on $\Span\{\vone\}^\perp$ are positive, and we denote the smallest one as $\lambda_{\min}^+ > 0$.
\end{lemma}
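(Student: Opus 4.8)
The plan is to reduce the claim to the standard spectral fact that a symmetric positive semidefinite matrix is positive definite on the orthogonal complement of its kernel. Write $M := (\vW-\vI)^\top(\vW-\vI)$, which is symmetric and positive semidefinite, so all of its eigenvalues are nonnegative and it admits an orthonormal eigenbasis. The first step is to identify $\Null(M)$ exactly. For any vector $\vx$ the identity $\vx^\top M \vx = \|(\vW-\vI)\vx\|^2$ shows that $M\vx = \vzero$ if and only if $(\vW-\vI)\vx = \vzero$; hence $\Null(M) = \Null(\vW-\vI)$. By condition $\mathrm{(iii)}$ of Assumption~\ref{assump-W}, $\Null(\vW-\vI) = \Span\{\vone\}$, so $\Null(M) = \Span\{\vone\}$, a one-dimensional space.

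Next I would check that $\Span\{\vone\}^\perp$ is invariant under $M$, so that the restriction is a well-defined self-adjoint operator. Condition $\mathrm{(iii)}$ gives $(\vW-\vI)\vone = \vzero$, hence $M\vone = \vzero$, i.e. $\vone$ is an eigenvector of $M$ for the eigenvalue $0$. Since $M$ is symmetric, the orthogonal complement of an eigenspace is $M$-invariant, so $M$ maps $\Span\{\vone\}^\perp$ into itself. The restriction $M|_{\Span\{\vone\}^\perp}$ is therefore symmetric and positive semidefinite on an $(m-1)$-dimensional space.

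Finally, I would rule out a zero eigenvalue on the complement. If $\vx \in \Span\{\vone\}^\perp$ satisfies $M\vx = \vzero$, then $\vx \in \Null(M) = \Span\{\vone\}$, and combined with $\vx \perp \vone$ this forces $\vx = \vzero$. Hence $M|_{\Span\{\vone\}^\perp}$ is injective, so its eigenvalues are all strictly positive, and we may define $\lambda_{\min}^+$ to be the smallest of them, which then satisfies $\lambda_{\min}^+ > 0$.

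There is no substantive obstacle here: the only points requiring care are the identification $\Null(M) = \Null(\vW-\vI)$ through the quadratic-form identity and the observation that $\Span\{\vone\}$ is simultaneously the kernel of $M$ and an invariant eigenspace, so that passing to its orthogonal complement removes exactly the single zero eigenvalue and leaves a positive definite operator. Note that conditions $\mathrm{(ii)}$ and $\mathrm{(iv)}$ of Assumption~\ref{assump-W} play no role in this purely algebraic statement.
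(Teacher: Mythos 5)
Your proof is correct and follows essentially the same route as the paper's: both rest on the identity $\vx^\top (\vW-\vI)^\top(\vW-\vI)\vx = \|(\vW-\vI)\vx\|^2$, Assumption~\ref{assump-W}(iii) to identify $\Null(\vW-\vI)=\Span\{\vone\}$, and the trivial intersection $\Span\{\vone\}\cap\Span\{\vone\}^\perp=\{\vzero\}$. The only difference is organizational: the paper argues by contradiction directly on the quadratic form, while you additionally verify that $\Span\{\vone\}^\perp$ is $M$-invariant so the restricted operator and its eigenvalues are well defined, a point the paper leaves implicit.
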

\begin{proof}
Let $\vx\in \Span\{\vone\}^\perp$ and $\vx\neq\vzero$. Suppose $\vx^\top (\vW-\vI)^\top(\vW-\vI) \vx = 0$, i.e., $\|(\vW-\vI) \vx\|^2 = 0$, so $(\vW-\vI) \vx = \vzero$. Hence, $\vx \in \Null(\vW-\vI)$. By Assumption~\ref{assump-W}(iii), it follows $\vx\in \Span\{\vone\}$. Because  $\vx\in \Span\{\vone\}^\perp$ also holds, it can only be $\vx=\vzero$, which contradicts to $\vx\neq \vzero$. Therefore, $\vx^\top (\vW-\vI)^\top(\vW-\vI) \vx > 0$, i.e., $(\vW-\vI)^\top(\vW-\vI)$ is positive definite on $\Span\{\vone\}^\perp$. This completes the proof.
\end{proof}

The next lemma shows a near-stationary point of $\phi$ is also a near-stationary solution of $p+g$ under different conditions on $h$.
\begin{lemma}\label{lem:rel-stat}
Suppose that $(\bar\vx, \bar\vLam)$ is an $\vareps$-stationary point of $\phi$, i.e., for a certain $\eta>0$, 
\begin{equation}\label{eq:opt-cond-phi-eps-bar}
\textstyle \frac{1}{\eta}\left\|\bar\vx - \prox_{\eta g}\big(\bar\vx - \eta\nabla_\vx P(\bar\vx,\bar\vLam)\big)\right\|\le \vareps,\quad \|\nabla_\vLam P(\bar\vx,\bar\vLam)\|_F \le \vareps.
\end{equation}
 We have the follows: 
\begin{enumerate}
\item[(i)] If $h$ is $L_h$-smooth, then $\bar\vx$ is an $\vareps_1$-stationary point of $p+g$, where $\vareps_1= \vareps + \vareps\sqrt{\frac{8(\mu + L + L_h)}{\mu \lambda_{\min}^+}}$ and $\lambda_{\min}^+$ is given in Lemma~\ref{lem:lam_min}, namely,  $\frac{1}{\eta}\left\| \bar\vx - \prox_{\eta g}\big(\bar\vx - \eta\nabla p(\bar\vx)\big)\right\| \le \vareps_1$.
\item[(ii)] If $\|\vxi\| \le M_h$ for any $\vxi\in \partial h(\vy)$ and for any $\vy\in \dom(h)$, 
then $\bar\vx$ is an $\vareps_2$-stationary point of $p+g$, where $\vareps_2=\vareps + \vareps\sqrt{\frac{8(\mu + L)}{\mu \lambda_{\min}^+}} + \sqrt{\frac{16L \vareps}{\mu  \lambda_{\min}^+} + \frac{4 L M_h^2 \vareps}{\mu }}$.
\end{enumerate}
\end{lemma}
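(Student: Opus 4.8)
The plan is to reduce both parts of the statement to a single quantity, the deviation of $\nabla_\vx P(\bar\vx,\bar\vLam)$ from $\nabla p(\bar\vx)$. Write $\bar\vY=S_\Phi(\vone\bar\vx^\top,\bar\vLam)$ with rows $\bar\vy_i$, set $\bar\vy_\avg=\frac1m\sum_i\bar\vy_i$, and let $\vy^\star=\argmax_\vy G(\vy)$ with $G(\vy):=\frac1m\sum_i f_i(\bar\vx,\vy)-h(\vy)$. By \eqref{eq:grad-P} and Danskin's theorem one has $\nabla_\vx P(\bar\vx,\bar\vLam)=\frac1m\sum_i\nabla_\vx f_i(\bar\vx,\bar\vy_i)$ and $\nabla p(\bar\vx)=\frac1m\sum_i\nabla_\vx f_i(\bar\vx,\vy^\star)$, so $L$-smoothness of each $f_i$ and Cauchy--Schwarz give $\|\nabla_\vx P(\bar\vx,\bar\vLam)-\nabla p(\bar\vx)\|\le \frac{L}{\sqrt m}\|\bar\vY-\vone(\vy^\star)^\top\|_F$. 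The nonexpansiveness of $\prox_{\eta g}$ then yields $\frac1\eta\|\bar\vx-\prox_{\eta g}(\bar\vx-\eta\nabla p(\bar\vx))\|\le\vareps+\|\nabla_\vx P(\bar\vx,\bar\vLam)-\nabla p(\bar\vx)\|$, using \eqref{eq:opt-cond-phi-eps-bar}. Since $\bar\vY-\vone(\vy^\star)^\top=\bar\vY_\perp+\vone(\bar\vy_\avg-\vy^\star)^\top$, I would use $\|\bar\vY-\vone(\vy^\star)^\top\|_F^2\le 2\|\bar\vY_\perp\|_F^2+2m\|\bar\vy_\avg-\vy^\star\|^2$, so it suffices to bound $\|\bar\vY_\perp\|_F$ and $\|\bar\vy_\avg-\vy^\star\|$ separately.

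The consensus piece is immediate: from \eqref{eq:opt-cond-phi-eps-bar} and $\nabla_\vLam P(\bar\vx,\bar\vLam)=-\frac{L}{2\sqrt m}(\vW-\vI)\bar\vY$ I get $\|(\vW-\vI)\bar\vY\|_F\le\frac{2\sqrt m\,\vareps}{L}$; because the columns of $\bar\vY_\perp$ lie in $\Span\{\vone\}^\perp$ and $(\vW-\vI)\vone(\bar\vy_\avg)^\top=\vzero$, Lemma~\ref{lem:lam_min} applied columnwise gives $\lambda_{\min}^+\|\bar\vY_\perp\|_F^2\le\|(\vW-\vI)\bar\vY\|_F^2$, hence $\|\bar\vY_\perp\|_F\le\frac{2\sqrt m\,\vareps}{L\sqrt{\lambda_{\min}^+}}$. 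For the second piece I would invoke quadratic growth: $G$ is $\mu$-strongly concave, so $\frac\mu2\|\bar\vy_\avg-\vy^\star\|^2\le G(\vy^\star)-G(\bar\vy_\avg)$. The crucial reduction is to replace $\vy^\star$, over which we have no handle, by a computable gap: Assumption~\ref{assump-relint} gives $p(\bar\vx)=\min_\vLam P(\bar\vx,\vLam)$, so $G(\vy^\star)=p(\bar\vx)\le P(\bar\vx,\bar\vLam)=\Phi(\vone\bar\vx^\top,\bar\vLam,\bar\vY)$, while $\Phi(\vone\bar\vx^\top,\bar\vLam,\vone\bar\vy_\avg^\top)=G(\bar\vy_\avg)$ since $(\vW-\vI)\vone=\vzero$. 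Therefore $G(\vy^\star)-G(\bar\vy_\avg)\le\Phi(\vone\bar\vx^\top,\bar\vLam,\bar\vY)-\Phi(\vone\bar\vx^\top,\bar\vLam,\vone\bar\vy_\avg^\top)$.

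The heart of the argument, and the step I expect to be the main obstacle, is bounding this $\Phi$-gap, which still contains the multiplier $\bar\vLam$ that is not a priori bounded. I would eliminate $\bar\vLam$ using the first-order optimality condition \eqref{eq:opt-S-Phi} of $\bar\vY=S_\Phi(\vone\bar\vx^\top,\bar\vLam)$, which reads $\frac{L}{2\sqrt m}(\vW-\vI)^\top\bar\vLam=\frac1m[\nabla_\vy f_1(\bar\vx,\bar\vy_1)-\vzeta_1,\ldots,\nabla_\vy f_m(\bar\vx,\bar\vy_m)-\vzeta_m]^\top$ for some $\vzeta_i\in\partial h(\bar\vy_i)$. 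Pairing with $\bar\vY_\perp$ and using $(\vW-\vI)\vone=\vzero$ turns the linear term into $\frac{L}{2\sqrt m}\langle\bar\vLam,(\vW-\vI)\bar\vY\rangle=\frac1m\sum_i\langle\nabla_\vy f_i(\bar\vx,\bar\vy_i)-\vzeta_i,\bar\vy_i-\bar\vy_\avg\rangle$, so the $\Phi$-gap reduces to $\frac1m\sum_i T_i$ with $T_i=\big(f_i(\bar\vx,\bar\vy_i)-f_i(\bar\vx,\bar\vy_\avg)-\langle\nabla_\vy f_i(\bar\vx,\bar\vy_i),\bar\vy_i-\bar\vy_\avg\rangle\big)+\big(h(\bar\vy_\avg)-h(\bar\vy_i)+\langle\vzeta_i,\bar\vy_i-\bar\vy_\avg\rangle\big)$. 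The $f$-part is a Bregman term bounded above by $\frac L2\|\bar\vy_i-\bar\vy_\avg\|^2$ via $L$-smoothness (Assumption~\ref{assump-func}). The $h$-part is where the two cases split: in case (i), $L_h$-smoothness of $h$ bounds it by $\frac{L_h}2\|\bar\vy_i-\bar\vy_\avg\|^2$, whence $G(\vy^\star)-G(\bar\vy_\avg)\le\frac{L+L_h}{2m}\|\bar\vY_\perp\|_F^2$ and $\|\bar\vy_\avg-\vy^\star\|^2\le\frac{L+L_h}{\mu m}\|\bar\vY_\perp\|_F^2$; in case (ii), convexity of $h$ and $\|\vxi\|\le M_h$ bound it by $2M_h\|\bar\vy_i-\bar\vy_\avg\|$, contributing the extra term $\frac{4M_h}{\mu\sqrt m}\|\bar\vY_\perp\|_F$ to $\|\bar\vy_\avg-\vy^\star\|^2$.

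Finally I would assemble everything. Substituting the displayed bounds for $\|\bar\vy_\avg-\vy^\star\|^2$ and the estimate $\|\bar\vY_\perp\|_F\le\frac{2\sqrt m\,\vareps}{L\sqrt{\lambda_{\min}^+}}$ into $\|\bar\vY-\vone(\vy^\star)^\top\|_F^2\le 2\|\bar\vY_\perp\|_F^2+2m\|\bar\vy_\avg-\vy^\star\|^2$ and then into $\|\nabla_\vx P-\nabla p\|\le\frac{L}{\sqrt m}\|\bar\vY-\vone(\vy^\star)^\top\|_F$ gives, in case (i), the coefficient $\vareps\sqrt{8(\mu+L+L_h)/(\mu\lambda_{\min}^+)}$ directly. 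In case (ii) one arrives at $\|\nabla_\vx P-\nabla p\|^2\le\frac{2L^2(\mu+L)}{\mu m}\|\bar\vY_\perp\|_F^2+\frac{8L^2M_h}{\mu\sqrt m}\|\bar\vY_\perp\|_F$; applying $\sqrt{a+b}\le\sqrt a+\sqrt b$, the quadratic part reproduces $\vareps\sqrt{8(\mu+L)/(\mu\lambda_{\min}^+)}$, and on the linear part the substitution followed by a Young's inequality ($\frac{16M_h}{\sqrt{\lambda_{\min}^+}}\le\frac{16}{\lambda_{\min}^+}+4M_h^2$) yields $\sqrt{\frac{16L\vareps}{\mu\lambda_{\min}^+}+\frac{4LM_h^2\vareps}{\mu}}$. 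Adding the base $\vareps$ from the prox-perturbation step produces $\vareps_1$ and $\vareps_2$, respectively, completing both cases.
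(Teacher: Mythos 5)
Your proof is correct and follows essentially the same route as the paper's: prox-nonexpansiveness reduces the claim to bounding $\|\nabla_\vx P(\bar\vx,\bar\vLam)-\nabla p(\bar\vx)\|$, the dual stationarity plus Lemma~\ref{lem:lam_min} gives $\|\bar\vY_\perp\|_F\le 2\sqrt{m}\,\vareps/(L\sqrt{\lambda_{\min}^+})$, and $\mu$-strong concavity combined with the optimality condition of $\bar\vY$, $L$-smoothness of $f_i$, and the $L_h$-smooth versus bounded-subgradient dichotomy for $h$ controls $\|\bar\vy_\avg-\vy^\star\|$ exactly as in the paper's chain \eqref{eq:stationary-ineq1}--\eqref{eq:stationary-ineq4}. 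The only deviations are cosmetic: you obtain $G(\vy^\star)\le \Phi(\vone\bar\vx^\top,\bar\vLam,\bar\vY)$ via weak duality rather than the paper's direct evaluation $\Phi(\vone\bar\vx^\top,\bar\vLam,\bar\vY)\ge\Phi(\vone\bar\vx^\top,\bar\vLam,\vone\hat\vy^\top)$, and in case (ii) you keep the linear $\|\bar\vY_\perp\|_F$ term and invoke Young's inequality only after substitution, whereas the paper applies the $\vareps$-dependent split $2M_h\|\bar\vy_\avg-\bar\vy_i\|\le M_h^2\vareps/L+L\|\bar\vy_\avg-\bar\vy_i\|^2/\vareps$ up front---both yield the identical $\vareps_2$.
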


\begin{proof}
For the given $(\bar\vx, \bar\vLam)$, we define 
$$\psi(\vY) := \Phi(\vone\bar\vx^\top, \bar\vLam, \vY),\quad \bar\vY = \argmax_\vY \psi(\vY), \quad \hat\vy=\argmax_\vy f(\bar\vx, \vy) - h(\vy).$$ 
It holds $\nabla_\vLam P(\bar\vx,\bar\vLam) = -\frac{L}{2\sqrt{m}} (\vW-\vI)\bar\vY$ and thus
\begin{equation}\label{eq:bd-Y-perp}
\textstyle\vareps^2\ge \|\nabla_\vLam P(\bar\vx,\bar\vLam)\|_F^2 = \frac{L^2}{4m}\|(\vW-\vI)\bar\vY\|_F^2 = \frac{L^2}{4m}\|(\vW-\vI)\bar\vY_\perp\|_F^2 \ge \frac{\lambda_{\min}^+ L^2}{4m}\|\bar\vY_\perp\|_F^2,
\end{equation} 
where the second equality holds because $\vW-\vI = (\vW-\vI)(\vI - \frac{1}{m}\vone\vone^\top)$, and the second inequality follows from Lemma~\ref{lem:lam_min}. 
Also, we have
\begin{equation*}
 \begin{aligned}
& ~\|\nabla p(\bar\vx) - \nabla_\vx P(\bar\vx,\bar\vLam)\|^2 = \textstyle \big\|\frac{1}{m}\sum_{i=1}^m \nabla_\vx f_i(\bar\vx, \hat\vy) - \frac{1}{m}\sum_{i=1}^m \nabla_\vx f_i(\bar\vx, \bar\vy_i)\big\|^2 \le \frac{L^2}{m} \sum_{i=1}^m\|\hat\vy - \bar\vy_i\|^2 \\
\le & ~ \textstyle \frac{2L^2}{m} \sum_{i=1}^m\left(\|\bar\vy_\avg - \bar\vy_i\|^2+ \|\hat\vy - \bar\vy_\avg\|^2\right) = \frac{2L^2}{m}\|\bar\vY_\perp\|_F^2 + 2L^2\|\hat\vy - \bar\vy_\avg\|^2,
\end{aligned}
\end{equation*}
and thus by the nonexpansiveness of $\prox_{\eta g}$, the first condition in \eqref{eq:opt-cond-phi-eps-bar}, and \eqref{eq:bd-Y-perp}, it follows
\begin{equation}\label{eq:gap-nab-p}
\textstyle \frac{1}{\eta}\left\| \bar\vx - \prox_{\eta g}\big(\bar\vx - \eta\nabla p(\bar\vx)\big)\right\| \le \vareps + \big\|\nabla p(\bar\vx) - \nabla_\vx P(\bar\vx,\bar\vLam)\big\| \le \vareps + \sqrt{ \frac{8\vareps^2}{\lambda_{\min}^+} + 2L^2\|\hat\vy - \bar\vy_\avg\|^2}.
\end{equation}
Hence, to show the near-stationarity of $\bar\vx$ for $p + g$, it is sufficient to bound $\|\hat\vy - \bar\vy_\avg\|^2$.

By the $L$-smoothness of each $f_i$, it follows
\begin{equation}\label{eq:stationary-ineq1}
\textstyle \frac{1}{m}\sum_{i=1}^m f_i(\bar\vx, \bar\vy_\avg) \ge \frac{1}{m}\sum_{i=1}^m \left( f_i(\bar\vx, \bar\vy_i) + \big\langle \nabla_\vy f_i(\bar\vx, \bar\vy_i),  \bar\vy_\avg - \bar\vy_i \big\rangle - \frac{L}{2}\|\bar\vy_\avg - \bar\vy_i\|^2\|\right).
\end{equation}
Also, by the optimality condition at $\bar\vY$, there exists a subgradient $ \bar\vxi_i\in\partial h(\bar\vy_i)$ for each $i$ such that
$$\vzero = \textstyle \frac{1}{m} \big[ \nabla_\vy f_1(\bar\vx, \bar\vy_1) - \bar\vxi_1, \ldots, \nabla_\vy f_m(\bar\vx, \bar\vy_m) - \bar\vxi_m \big]^\top - \frac{L}{2\sqrt{m}} (\vW-\vI)^\top \bar\vLam.$$
The equation above together with \eqref{eq:stationary-ineq1} gives
\begin{align}\label{eq:stationary-ineq2}
&\, \textstyle \frac{1}{m}\sum_{i=1}^m f_i(\bar\vx, \bar\vy_\avg) \nonumber\\ 
\ge &\, \textstyle \frac{1}{m}\sum_{i=1}^m \left(f_i(\bar\vx, \bar\vy_i) + \big\langle \bar\vxi_i,  \bar\vy_\avg - \bar\vy_i \big\rangle - \frac{L}{2}\|\bar\vy_\avg - \bar\vy_i\|^2\|\right) + \frac{L}{2\sqrt{m}} \big\langle (\vW-\vI)^\top \bar\vLam, \vone\bar\vy_\avg^\top - \bar\vY\big\rangle \nonumber\\ 
= &\, \textstyle \frac{1}{m}\sum_{i=1}^m \left(f_i(\bar\vx, \bar\vy_i) + \big\langle \bar\vxi_i,  \bar\vy_\avg - \bar\vy_i \big\rangle - \frac{L}{2}\|\bar\vy_\avg - \bar\vy_i\|^2\|\right) - \frac{L}{2\sqrt{m}} \big\langle (\vW-\vI)^\top \bar\vLam, \bar\vY\big\rangle \nonumber\\
 = &\, \textstyle \psi(\bar\vY) - \frac{L}{2m} \|\bar\vY_\perp\|_F^2 + \frac{1}{m}\sum_{i=1}^m \left(h(\bar\vy_i) +\big\langle \bar\vxi_i,  \bar\vy_\avg - \bar\vy_i \big\rangle \right),
\end{align}
where the first equality follows from $\vW\vone = \vone$ and the second equality uses the definition of $\psi$. Subtract both sides of \eqref{eq:stationary-ineq2} by $h(\bar\vy_\avg)$ and use the definition of $f$ to have 
\begin{equation}\label{eq:stationary-ineq3}
f(\bar\vx, \bar\vy_\avg) - h(\bar\vy_\avg) \ge \textstyle \psi(\bar\vY)  - \frac{L}{2m} \|\bar\vY_\perp\|_F^2 + \frac{1}{m}\sum_{i=1}^m \left(h(\bar\vy_i) - h(\bar\vy_\avg) +\big\langle \bar\vxi_i,  \bar\vy_\avg - \bar\vy_i \big\rangle \right).
\end{equation}
Moreover, by the definition of $\bar\vY$, it holds
$\psi(\bar\vY) \ge \psi(\vone\hat\vy^\top) =  f(\bar\vx, \hat\vy) - h(\hat\vy)$, and by the $\mu$-strong concavity of $f(\bar\vx, \,\cdot\,) - h(\cdot)$, we have $f(\bar\vx, \bar\vy_\avg) - h(\bar\vy_\avg) \le f(\bar\vx, \hat\vy) - h(\hat\vy) - \frac{\mu}{2}\|\hat\vy - \bar\vy_\avg\|^2$. Combining these two inequalities with \eqref{eq:stationary-ineq3} gives
\begin{equation}\label{eq:stationary-ineq4}
\textstyle  \frac{\mu}{2}\|\hat\vy - \bar\vy_\avg\|^2 \le    \frac{L}{2m} \|\bar\vY_\perp\|_F^2 + \frac{1}{m}\sum_{i=1}^m \left(h(\bar\vy_\avg) - h(\bar\vy_i) -  \big\langle \bar\vxi_i,  \bar\vy_\avg - \bar\vy_i \big\rangle \right).
\end{equation}

Now if $h$ is $L_h$-smooth, the subgradient $\bar\vxi_i$ reduces to the gradient $\nabla h(\bar\vy_i)$, and $  h(\bar\vy_\avg) - h(\bar\vy_i) - \big\langle \nabla h(\bar\vy_i),  \bar\vy_\avg - \bar\vy_i \big\rangle \le \frac{L_h}{2}\|\bar\vy_\avg - \bar\vy_i\|^2$ for each $i$.
 Hence, \eqref{eq:stationary-ineq4} implies $\mu \|\hat\vy - \bar\vy_\avg\|^2 \le \frac{L+L_h}{m} \|\bar\vY_\perp\|_F^2$, which together with \eqref{eq:bd-Y-perp} gives $\|\hat\vy - \bar\vy_\avg\|^2 \le \frac{4 (L + L_h) \vareps^2}{\mu L^2 \lambda_{\min}^+}$. Thus by \eqref{eq:gap-nab-p}, we obtain 
\begin{equation*}
\textstyle \frac{1}{\eta}\left\| \bar\vx - \prox_{\eta g}\big(\bar\vx - \eta\nabla p(\bar\vx)\big)\right\| \le \vareps + \sqrt{ \frac{8\vareps^2}{\lambda_{\min}^+} + \frac{8 (L + L_h) \vareps^2}{\mu  \lambda_{\min}^+}} = \vareps + \vareps\sqrt{\frac{8(\mu + L + L_h)}{\mu \lambda_{\min}^+}},
\end{equation*}
which proves the first claim of the lemma.

When $\|\bar\vxi_i\| \le M_h$,  it holds 
$\textstyle h(\bar\vy_\avg) - h(\bar\vy_i)  - \big\langle \bar\vxi_i,  \bar\vy_\avg - \bar\vy_i \big\rangle \le 2 M_h\|\bar\vy_\avg - \bar\vy_i\| \le \frac{M_h^2 \vareps}{L} + \frac{L \|\bar\vy_\avg - \bar\vy_i\|^2}{\vareps}$ for each $i$.
 Hence, \eqref{eq:stationary-ineq4} implies $\frac{\mu}{2}\|\hat\vy - \bar\vy_\avg\|^2 \le    \frac{L}{2m} \|\bar\vY_\perp\|_F^2 + \frac{L}{m\vareps} \|\bar\vY_\perp\|_F^2 + \frac{M_h^2 \vareps}{L}$, which together with \eqref{eq:bd-Y-perp} gives $\|\hat\vy - \bar\vy_\avg\|^2 \le (1 + \frac{2}{\vareps})\frac{4\vareps^2}{\mu  \lambda_{\min}^+ L} + \frac{2 M_h^2 \vareps}{\mu L}$. Thus by \eqref{eq:gap-nab-p}, we obtain 
\begin{equation*}
\textstyle \frac{1}{\eta}\left\| \bar\vx - \prox_{\eta g}\big(\bar\vx - \eta\nabla p(\bar\vx)\big)\right\| \le \vareps + \sqrt{ \frac{8\vareps^2}{\lambda_{\min}^+} + (1 + \frac{2}{\vareps})\frac{8L \vareps^2 }{\mu  \lambda_{\min}^+ } + \frac{4 L M_h^2 \vareps}{\mu }} \le \vareps + \vareps\sqrt{\frac{8(\mu + L)}{\mu \lambda_{\min}^+}} + \sqrt{\frac{16L \vareps}{\mu  \lambda_{\min}^+} + \frac{4 L M_h^2 \vareps}{\mu }},
\end{equation*}
which completes the proof.
\end{proof}

\begin{remark}\label{rm:stat-original}
By Lemma~\ref{lem:rel-stat}, to produce an $\vareps$-solution $\vX$ of the original formulation \eqref{eq:min-max-prob-dec}, i.e., $\frac{L}{\sqrt{m}}\|\vX_\perp\|_F \le \vareps$ and $\frac{1}{\eta}\left\| \vx_\avg - \prox_{\eta g}\big(\vx_\avg - \eta\nabla p(\vx_\avg)\big)\right\| \le \vareps$ for some $\eta>0$, the complexity will have an additional factor of $\kappa$ on top of those in Remark~\ref{rm:complexity-result} when $h$ is smooth, and an additional factor of $\frac{\kappa^2}{\vareps^2}$ when $h$ is nonsmooth but has bounded subgradients. The latter one gives higher complexity, compared to a centralized GDMax method. We conjecture that the worse result is caused by our possibly non-tight analysis. 
\end{remark}

\section{Numerical Experiments}\label{sec:numerical}
In this section,  we test the proposed algorithm on solving the following distributionally robust logistic regression (DRLR):
\begin{equation}\label{eq:drlr}
\min_{\vx\in\RR^n} \max_{\vy\in\cY} \textstyle \sum_{j=1}^N y_j \ell(\vx; \va_j, b_j) + V_x(\vx) - V_y(\vy),
\end{equation}
where $\{(\va_j, b_j)\}_{j=1}^N$ is the dataset with each $\va_j\in\RR^n$ a feature vector and $b_j\in \{-1, +1\}$ the corresponding label, $\cY=\{\vy \in \RR^N_+: \vone^\top \vy = 1\}$ is the $N$-dimensional simplex, $\ell(\vx; \va, b)=\log\big(1+\exp(-b\cdot\va^\top\vx)\big)$ is the logistic loss, $V_x(\vx) = \beta_x \sum_{i=1}^n \frac{\alpha x_i^2}{1+\alpha x_i^2}$, and $V_y(\vy) = \frac{\beta_y}{2}\|\vy - \frac{\vone}{N}\|^2$ is a strongly-convex term that controls the distance of the probability vector $\vy$ to the uniform distribution vector $\frac{\vone}{N}$. When $\vy= \frac{\vone}{N}$ is enforced, \eqref{eq:drlr} reduces to the standard (non-robust) LR. Suppose the dataset is partitioned into $m$ subsets $\{\cD_i\}_{i=1}^m$. Let $\cJ_i\subset[N]$ be index set corresponding to the sub-dataset $\cD_i$ for each $i\in [m]$. Define
$$\textstyle f_i(\vx, \vy) = m\sum_{j\in \cJ_i} y_j \ell(\vx; \va_j, b_j) + V_x(\vx) - V_y(\vy),\forall\, i\in [m], \quad g(\vx) = 0, \quad h(\vy) = \iota_\cY.$$
Then \eqref{eq:drlr} can be formulated to the form of \eqref{eq:min-max-prob} and satisfies the conditions in Assumptions~\ref{assump-func} and \ref{assump-relint}.

\subsection{Comparison in a centralized setting}
A centralized (or non-distributed) version of our method can be easily obtained if $m=1$ in Algorithm~\ref{alg:dgdm}, and we name it as GDMax. For the purpose of demonstrating the advantage by dual maximization, we only compare our method to the GDA method in \cite{lin2020gradient}. Three LIBSVM \cite{chang2011libsvm} datasets are used: \verb|a9a|, \verb|gisette|, and \verb|rcv1|. For each dataset, we fix $\alpha =10, \beta_x = 10^{-3}$ in \eqref{eq:drlr} by following \cite{yan2019stochastic} and vary $\beta_y$ from $\{0.01, 0.1, 1\}$, with a smaller $\beta_y$ indicating a harder instance. Since the $\vy$-subproblem can be exactly solved by projecting a certain point onto the simplex, no iterative subroutine is needed, and we only need to tune the $\vx$-stepsize for our method. GDA has both $\vx$ and $\vy$-stepsizes to tune. For each dataset, we grid-search $\eta_x$ from $\{0.5, 0.1, 0.05, 0.01, 0.005, 0.001\}$ for our method and $(\eta_x, \eta_y)$ from  $\{0.5, 0.1, 0.05, 0.01, 0.005, 0.001\}^2$ for GDA. The best results are reported. Figure~\ref{fig:cen-alg} shows the results, where the best stepsize is included in the table. We see that a smaller $\beta_y$ (that corresponds to a larger $\kappa$) slows down the convergence of both GDMax and GDA, but GDMax performs significantly better than GDA, especially when $\beta_y$ is small. This demonstrates the advantage of performing dual maximization.

\begin{figure}[h]
\begin{center}
\begin{tabular}{ccc}
{\small $\beta_y = 0.01$} & {\small $\beta_y = 0.1$} & {\small $\beta_y = 1$} \\
\includegraphics[width = 0.25\textwidth]{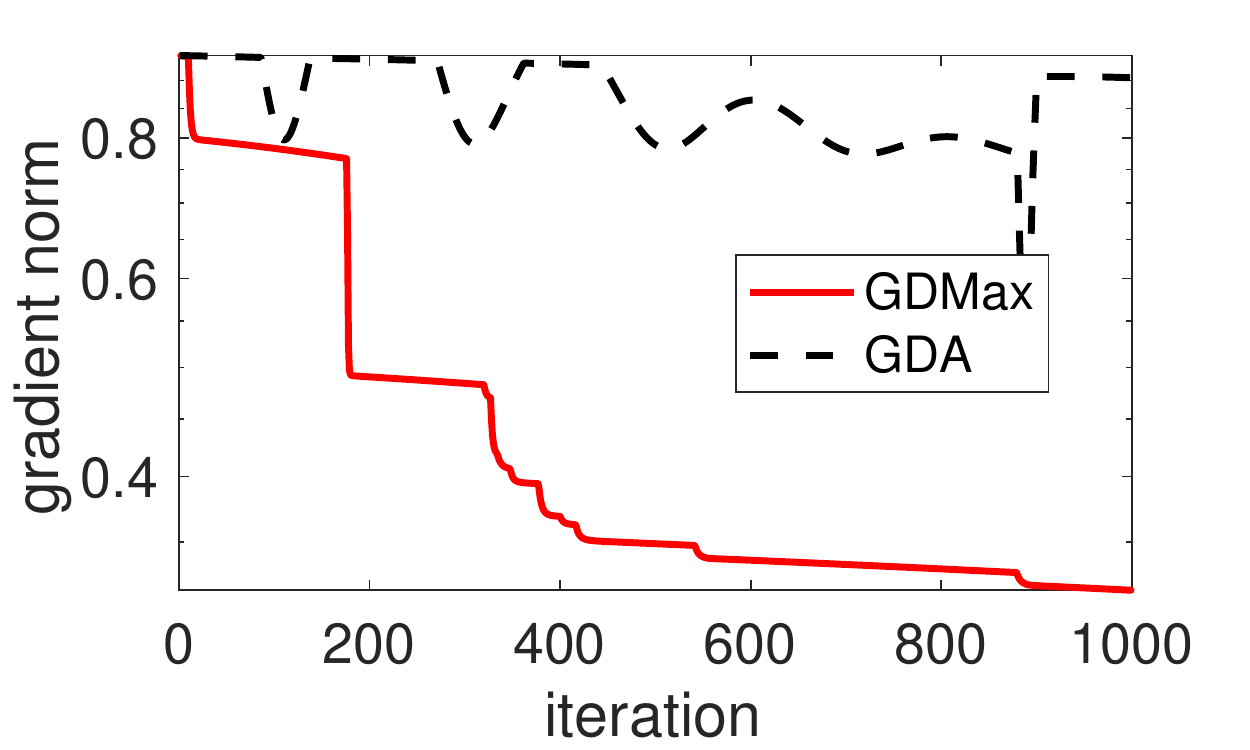} &
\includegraphics[width = 0.25\textwidth]{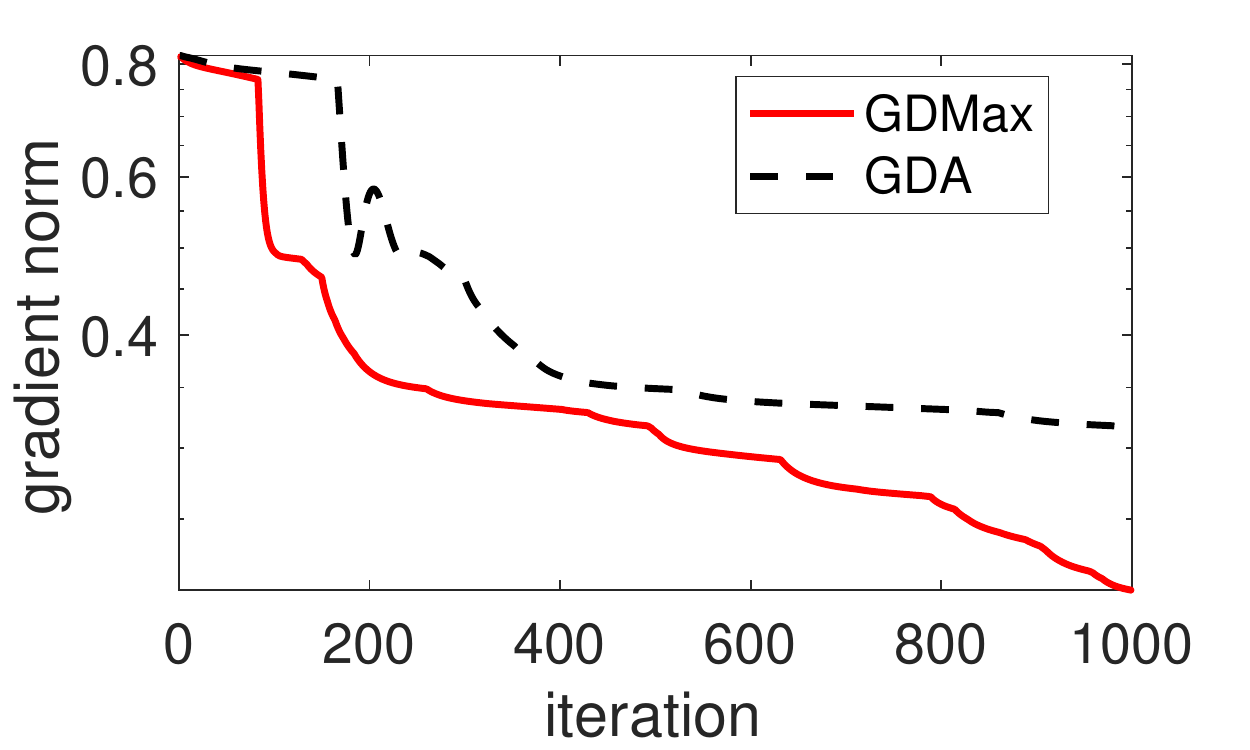} &
\includegraphics[width = 0.25\textwidth]{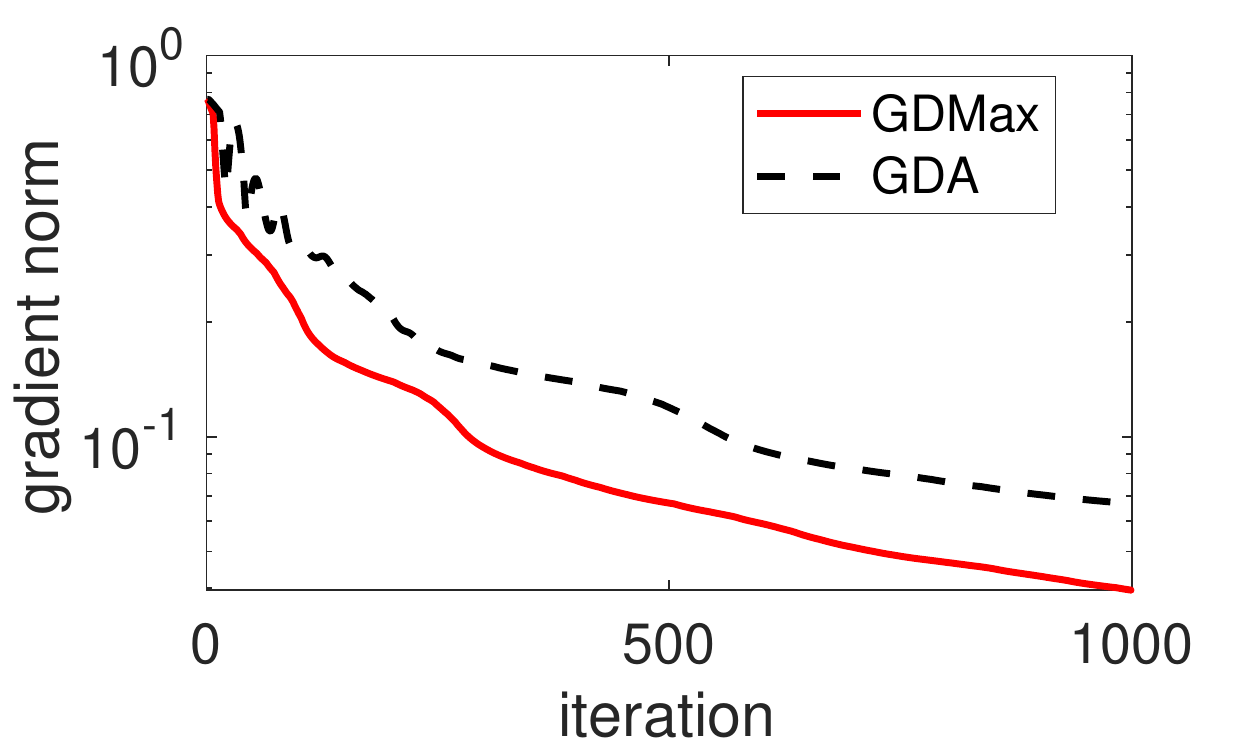} \\
\includegraphics[width = 0.25\textwidth]{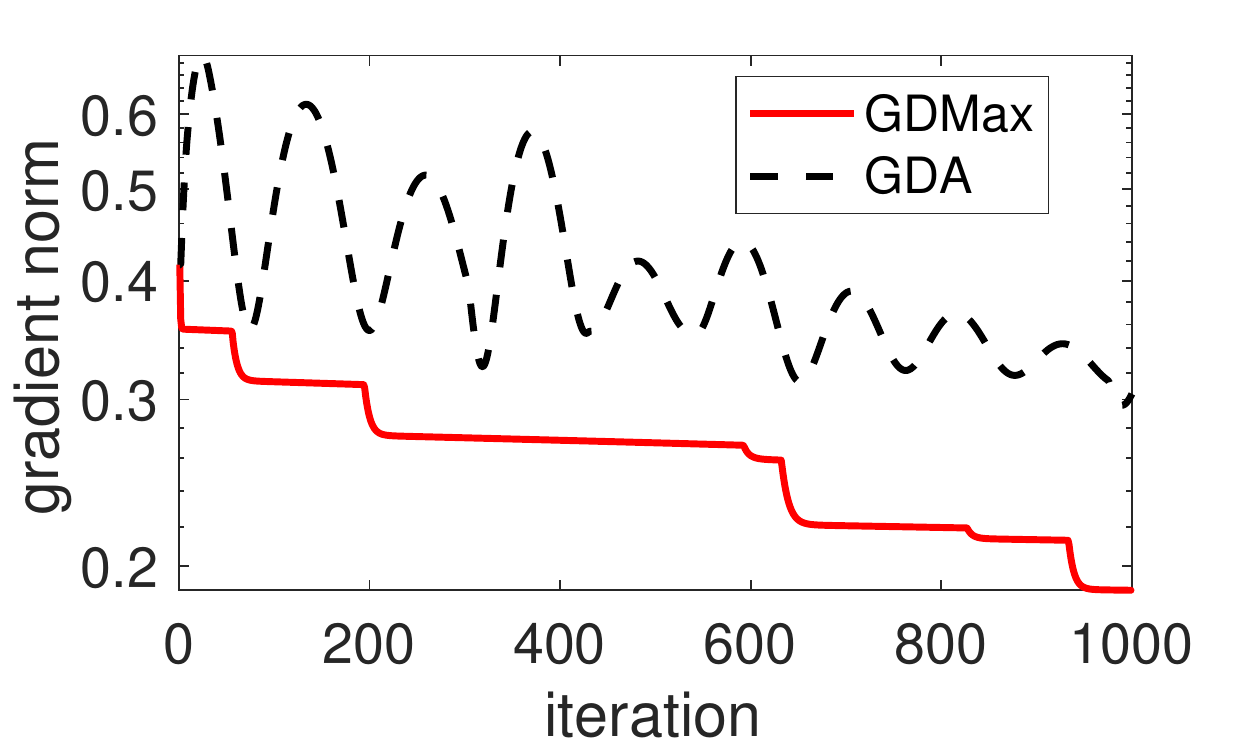} &
\includegraphics[width = 0.25\textwidth]{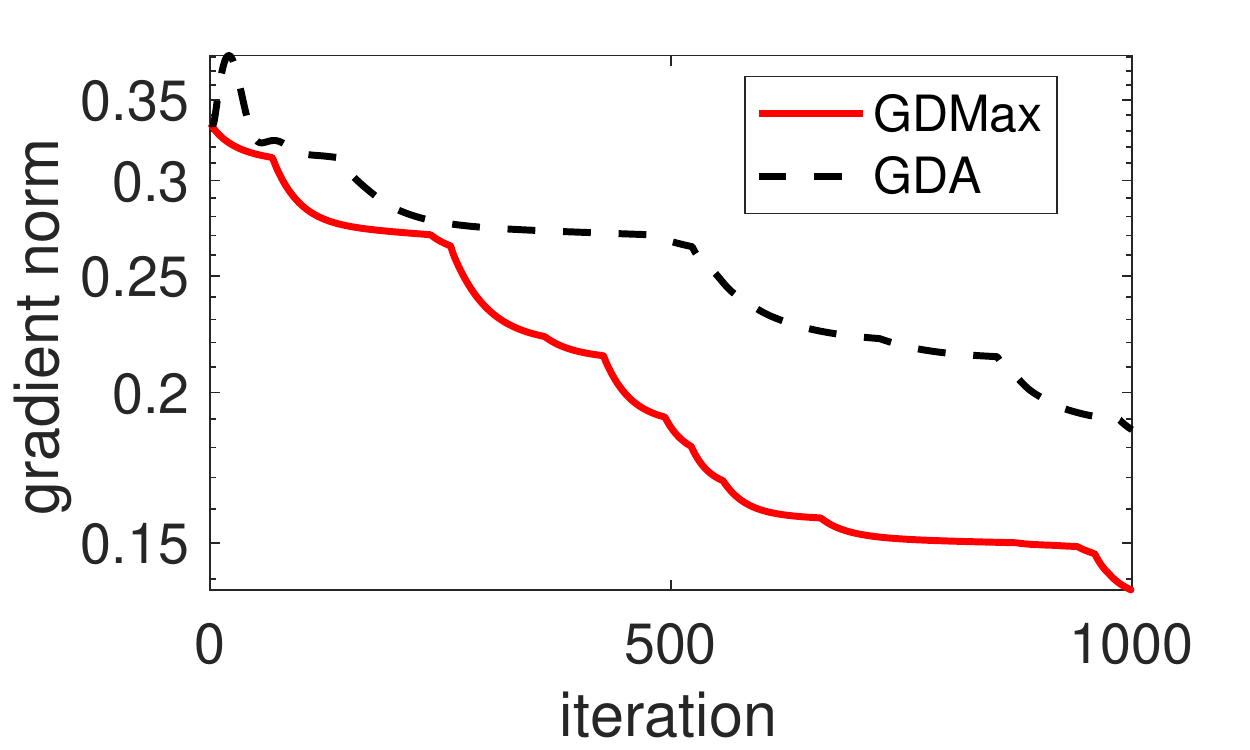} &
\includegraphics[width = 0.25\textwidth]{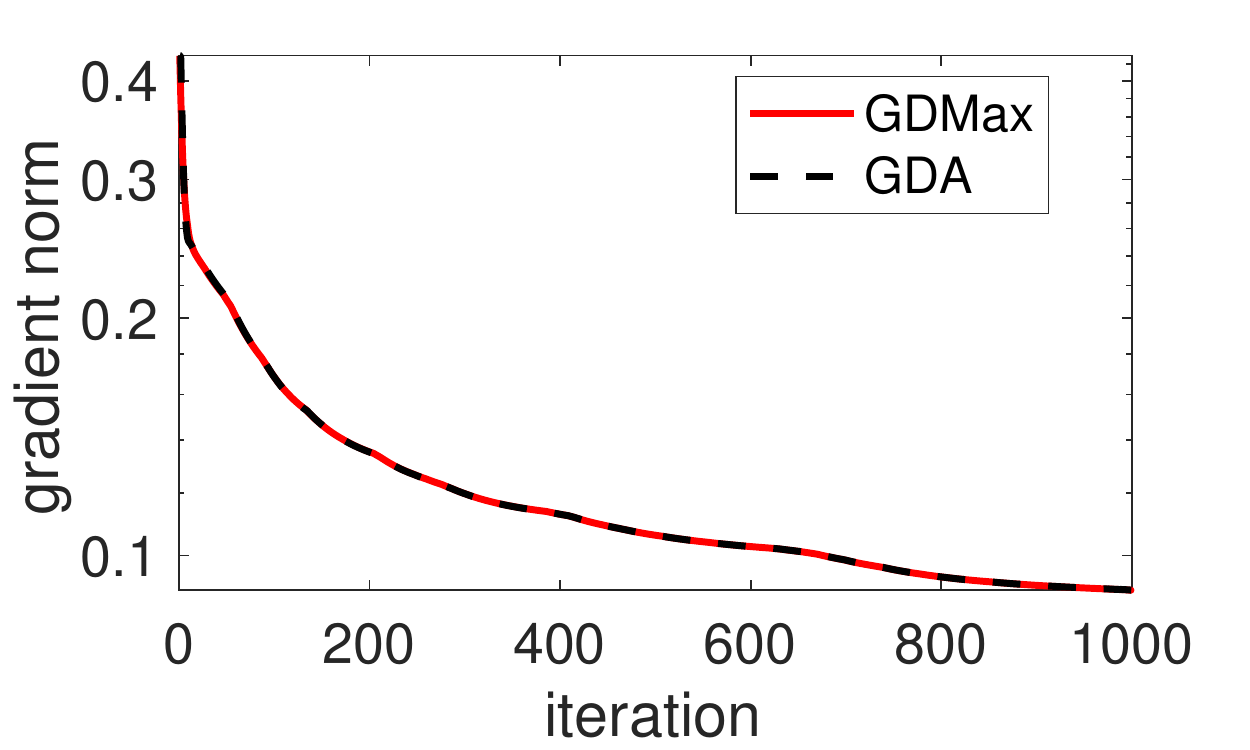} \\
\includegraphics[width = 0.25\textwidth]{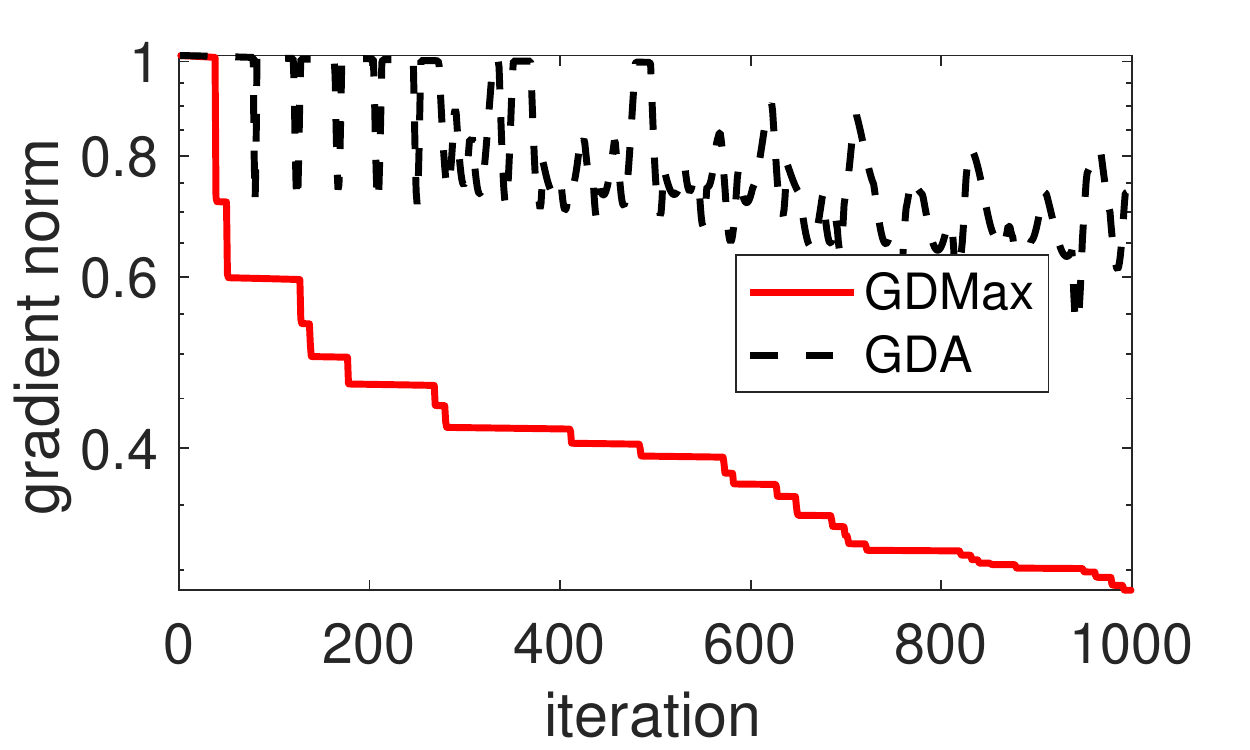} &
\includegraphics[width = 0.25\textwidth]{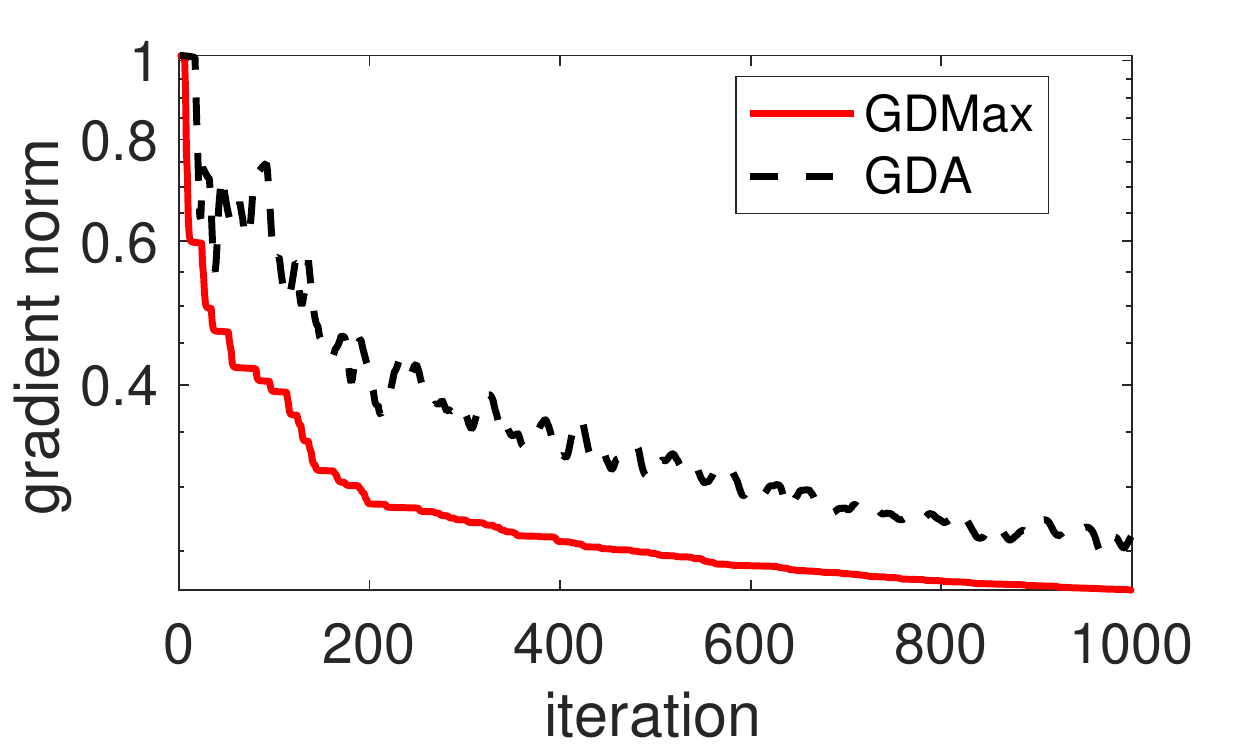} &
\includegraphics[width = 0.25\textwidth]{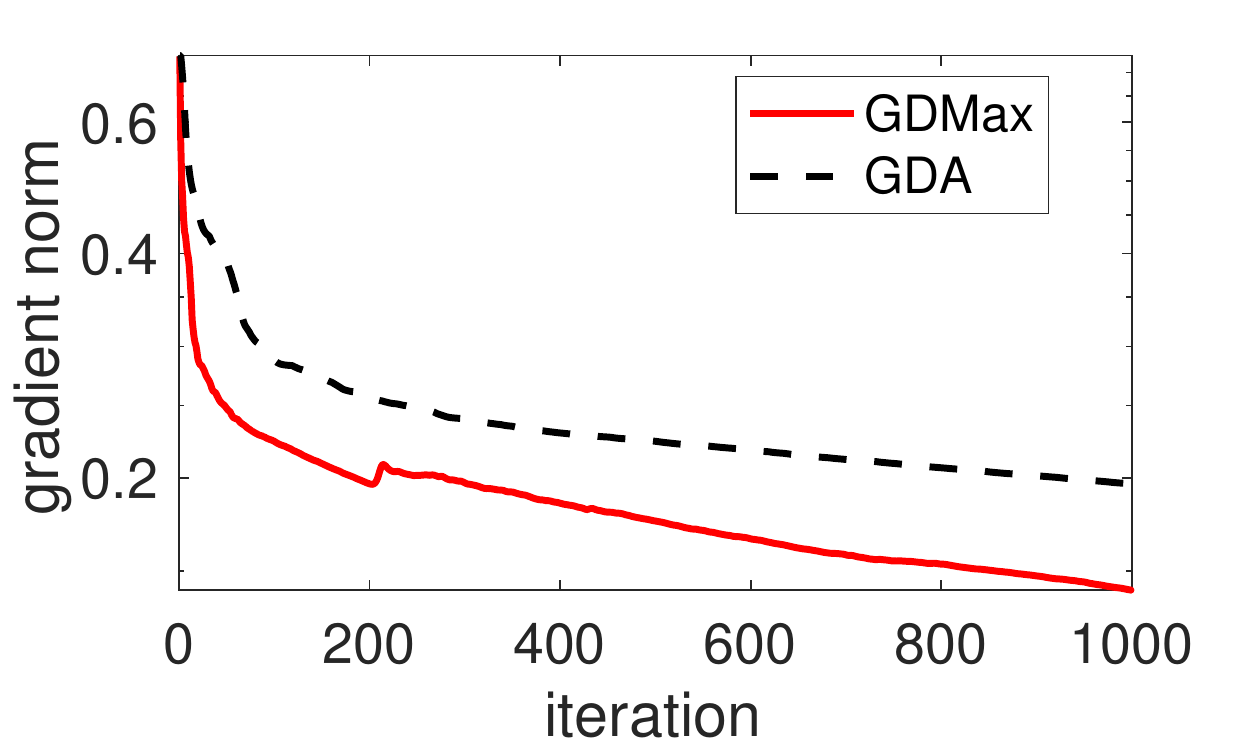} 
\end{tabular}
\resizebox{0.85\textwidth}{!}{
\begin{tabular}{cc|c|c|c|c|c}
\cline{2-7}
&\multicolumn{2}{c|}{\small $\beta_y = 0.01$} & \multicolumn{2}{|c|}{\small $\beta_y = 0.1$} & \multicolumn{2}{|c}{\small $\beta_y = 1$}\\\cline{2-7}
& GDMax & GDA & GDMax & GDA & GDMax & GDA \\\hline
\multicolumn{1}{c|}{a9a} & $\eta_x = 0.005$ & $\eta_x = 0.001, \eta_y = 0.5$ & $\eta_x = 0.01$ & $\eta_x = 0.005, \eta_y = 0.5$ & $\eta_x = 0.1$ & $\eta_x = 0.05, \eta_y = 0.05$\\
\multicolumn{1}{c|}{gisette} & $\eta_x = 0.005$ & $\eta_x = 0.001, \eta_y = 0.5$ & $\eta_x = 0.01$ & $\eta_x = 0.005, \eta_y = 0.5$ & $\eta_x = 0.05$ & $\eta_x = 0.01, \eta_y = 0.5$\\
\multicolumn{1}{c|}{rcv1} & $\eta_x = 0.01$ & $\eta_x = 0.005, \eta_y = 0.5$ & $\eta_x = 0.01$ & $\eta_x = 0.01, \eta_y = 0.1$ & $\eta_x = 0.5$ & $\eta_x = 0.1, \eta_y = 0.5$\\\hline
\end{tabular}
}
\end{center}
\caption{Primal gradient norm at each iterate by the centralized version of our method (called GDMax) and GDA in \cite{lin2020gradient} for solving the DRLR problem in \eqref{eq:drlr} on a9a, gisette, and rcv1 (from top to bottom) with $\beta_y$ varying from $\{0.01,0.1,1\}$ and $\alpha=10, \beta_x=10^{-3}$ fixed. The corresponding best stepsizes are listed in the table.}\label{fig:cen-alg}
\end{figure}

\subsection{Comparison to a decentralized method} We further compare the proposed method D-GDMax to the DREAM method in \cite{chen2022simple}, which appears to be the only existing decentralized method with guaranteed convergence for solving NCSC minimax problem with a $\vy$-constraint, by performing multiple communication every update. Again we use \verb|a9a|, \verb|gisette|, and \verb|rcv1|, and each of them is partitioned uniformly at random into $m=20$ subsets. We generate an Erd\H{o}s-R\'enyi random connected graph with $m$ nodes. The mixing matrix $\vW$ is set to $\vI - 0.8\vL / \lambda_{\max}(\vL)$, where $\vL$ is the graph Laplacian matrix. The generated $\vW$ has $\rho = \|\vW - \frac{1}{m}\vone\vone^\top\|\approx 0.9271$. The same $\vW$ is used by D-GDMax and DREAM. Again we fix $\alpha =10, \beta_x = 10^{-3}$ in \eqref{eq:drlr} but vary $\beta_y$ from $\{0.1,1,10\}$. For DREAM, we simply perform 10 rounds of communication per iteration. Both methods have two stepsizes, and we grid-search them from $\{0.5, 0.1, 0.05, 0.01, 0.005, 0.001\}^2$. The best results are plotted in Figure~\ref{fig:dec-alg} and the corresponding stepsizes listed in the table. The consensus error for DREAM is not plotted, as it stands in the order of $10^{-4}$ at each iterate due to the multiple communication. We see that, except for \verb|rcv1| with $\beta_y=0.1$, the gradient error, which is measured based on the original formulation \eqref{eq:min-max-prob-dec}, by DREAM is significantly larger than that by D-GDMax even though the former performs multiple communication at each iteration. The advantage of D-GDMax should still be attribute to the dual maximization that is enabled by our reformulation.

\begin{figure}[h]
\begin{center}
\begin{tabular}{ccc}
{\small $\beta_y = 0.1$} & {\small $\beta_y = 1$} & {\small $\beta_y = 10$} \\
\includegraphics[width = 0.3\textwidth]{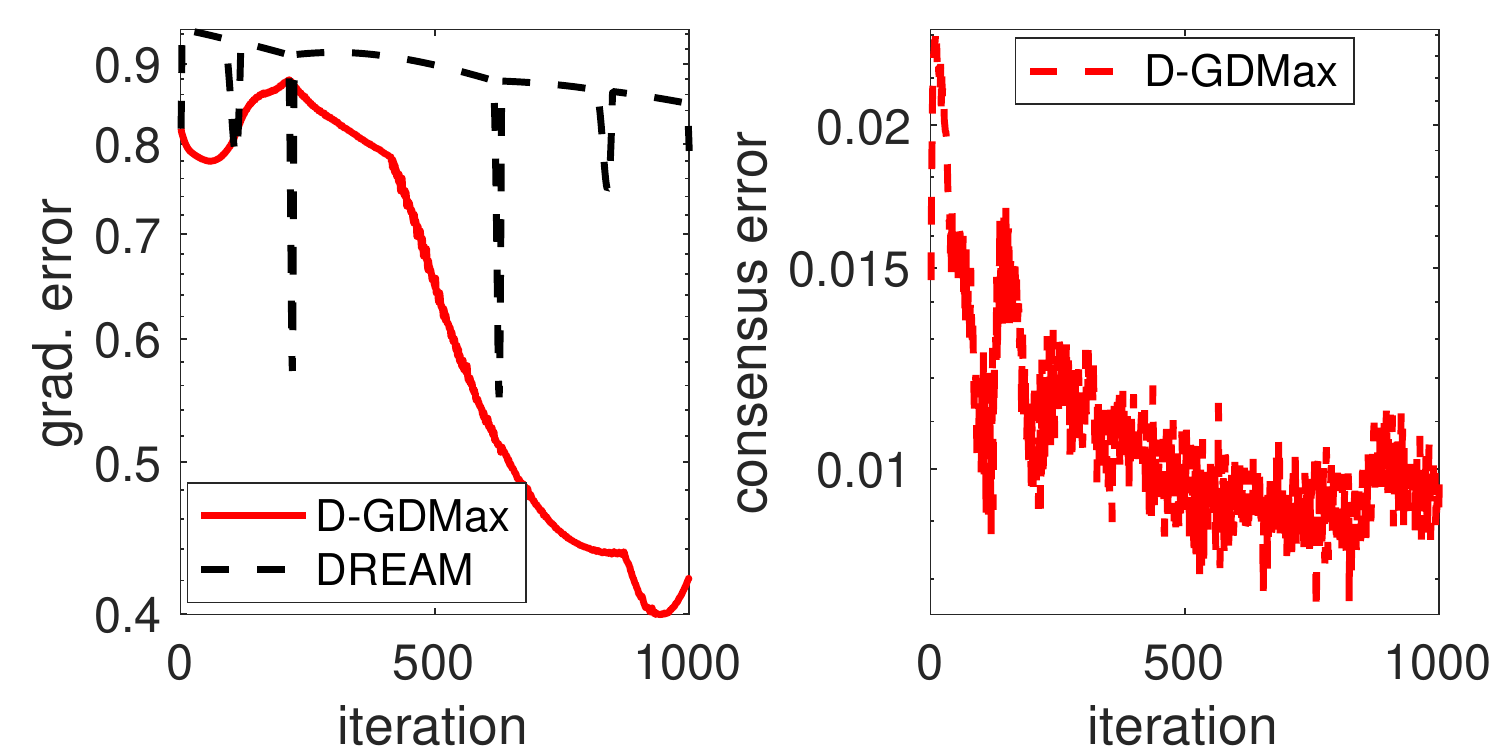} &
\includegraphics[width = 0.3\textwidth]{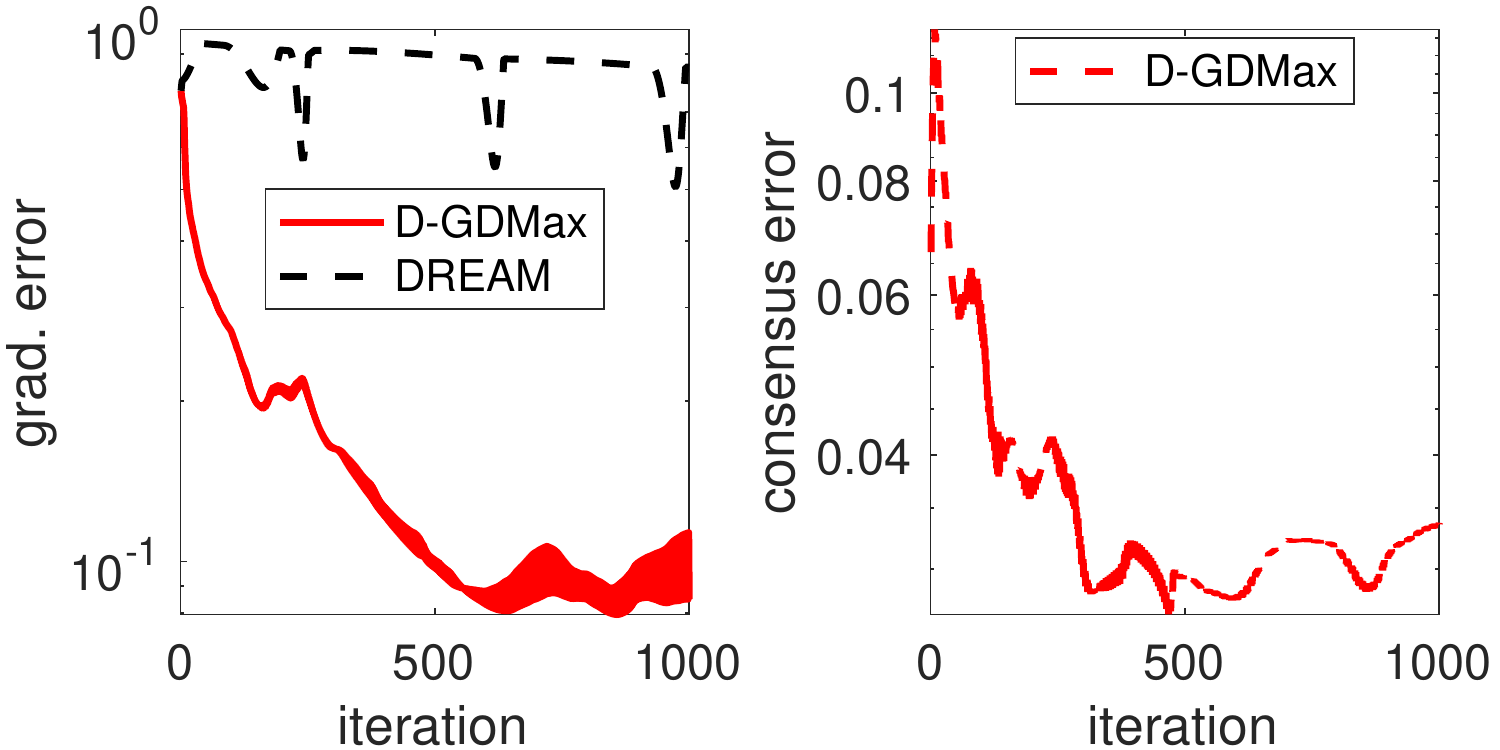} &
\includegraphics[width = 0.3\textwidth]{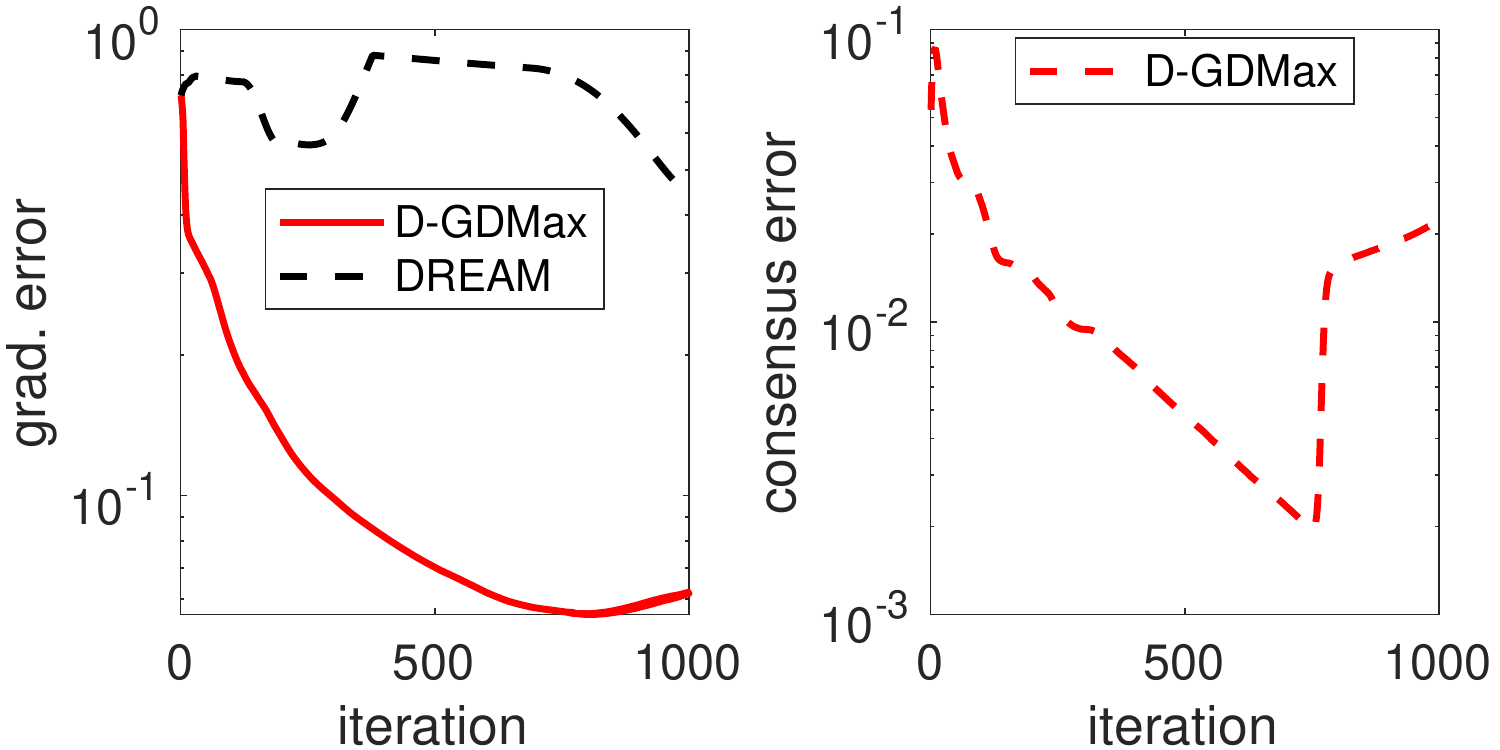} \\
\includegraphics[width = 0.3\textwidth]{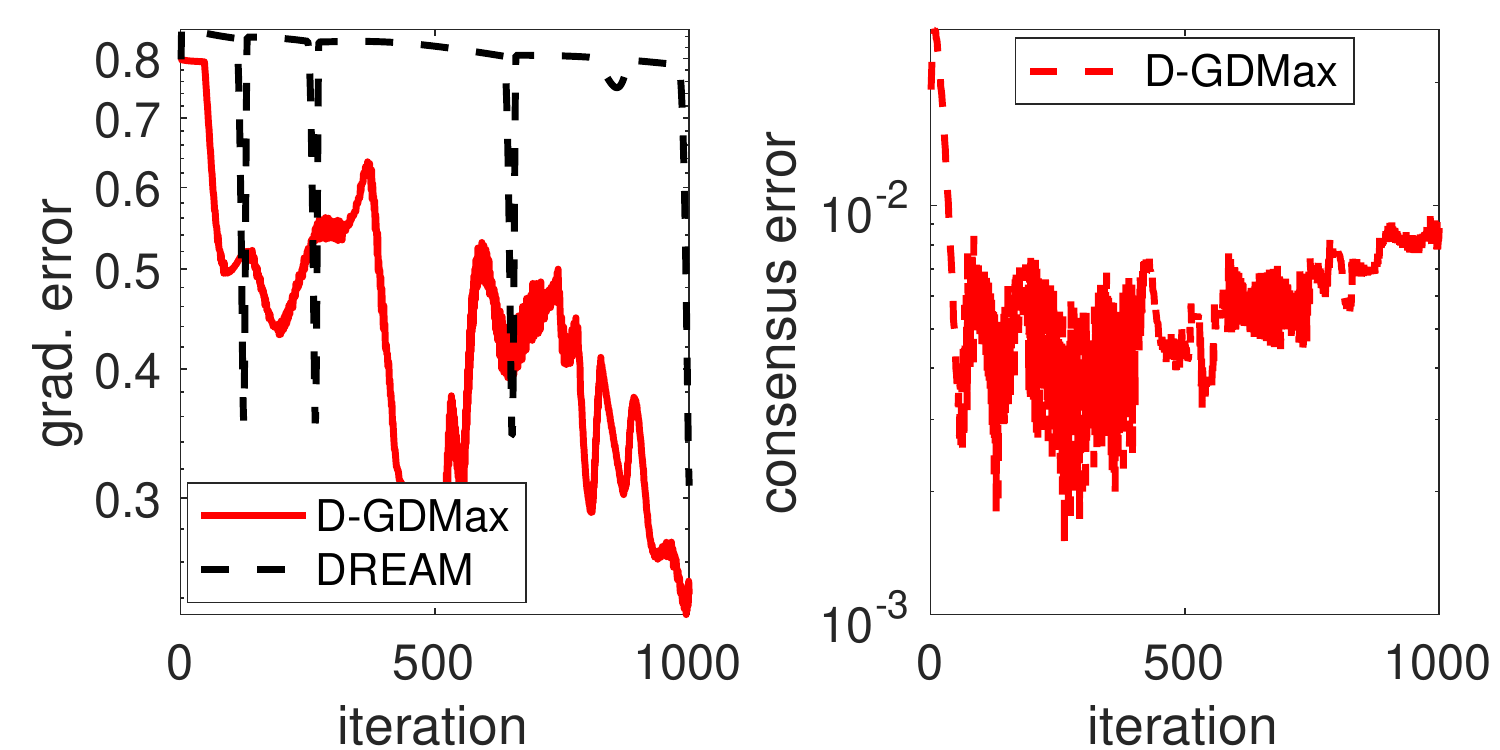} &
\includegraphics[width = 0.3\textwidth]{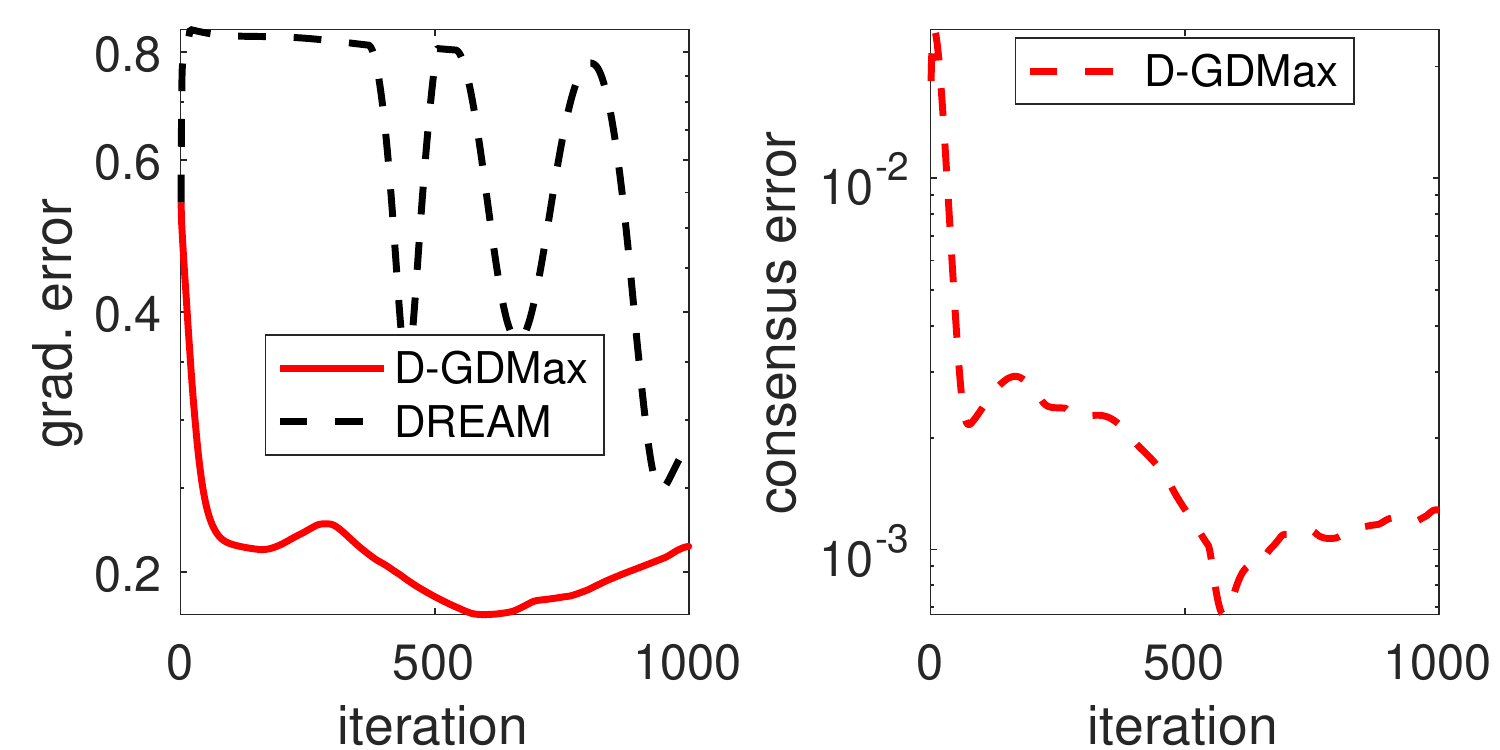} &
\includegraphics[width = 0.3\textwidth]{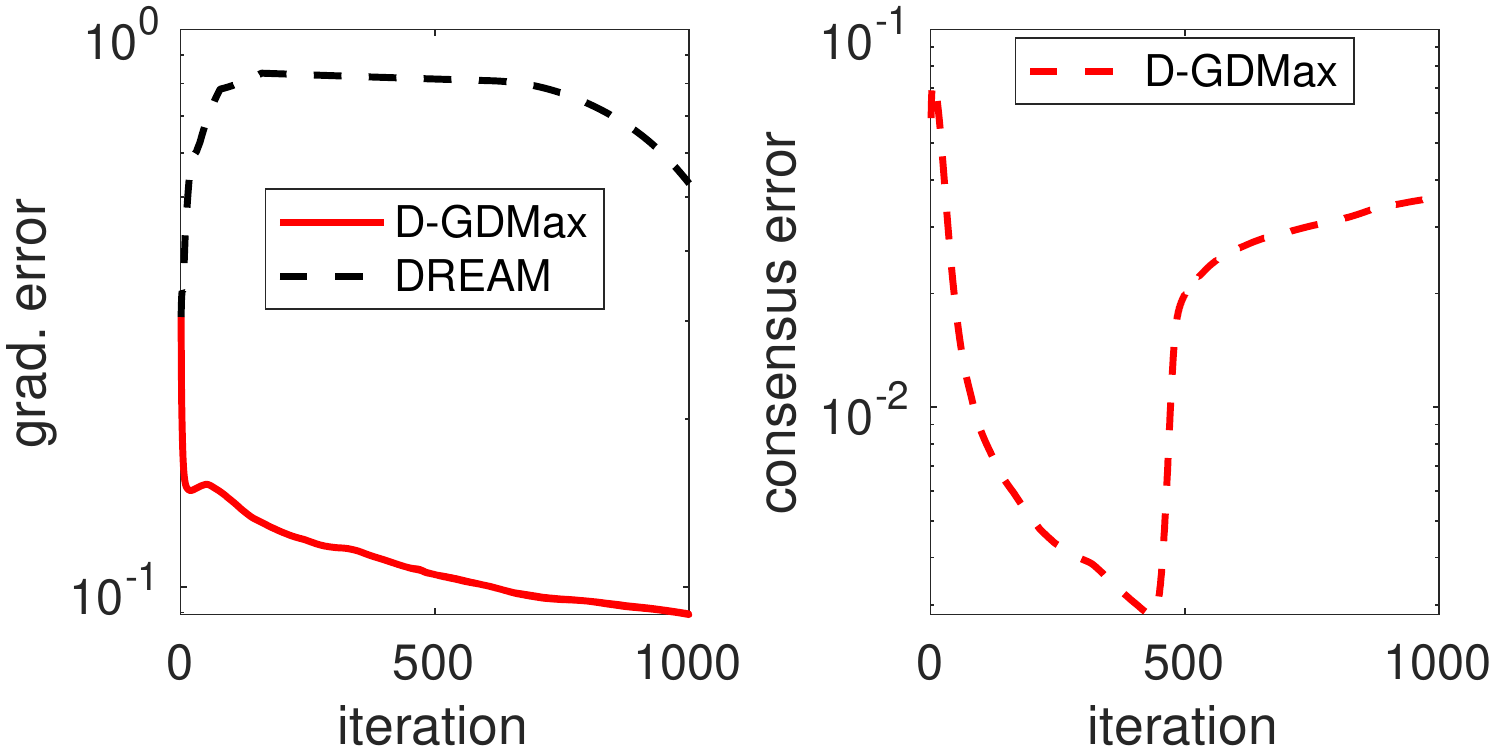} \\
\includegraphics[width = 0.3\textwidth]{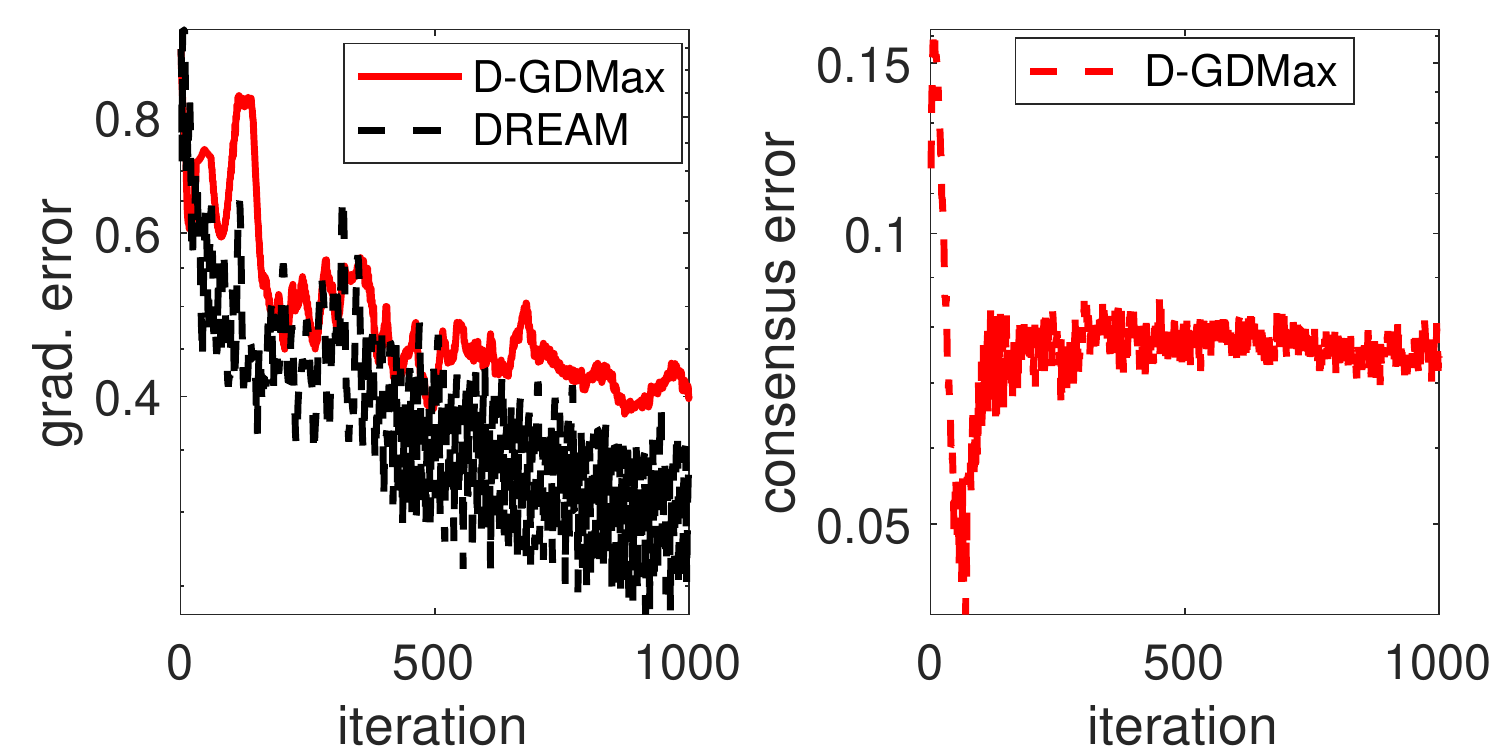} &
\includegraphics[width = 0.3\textwidth]{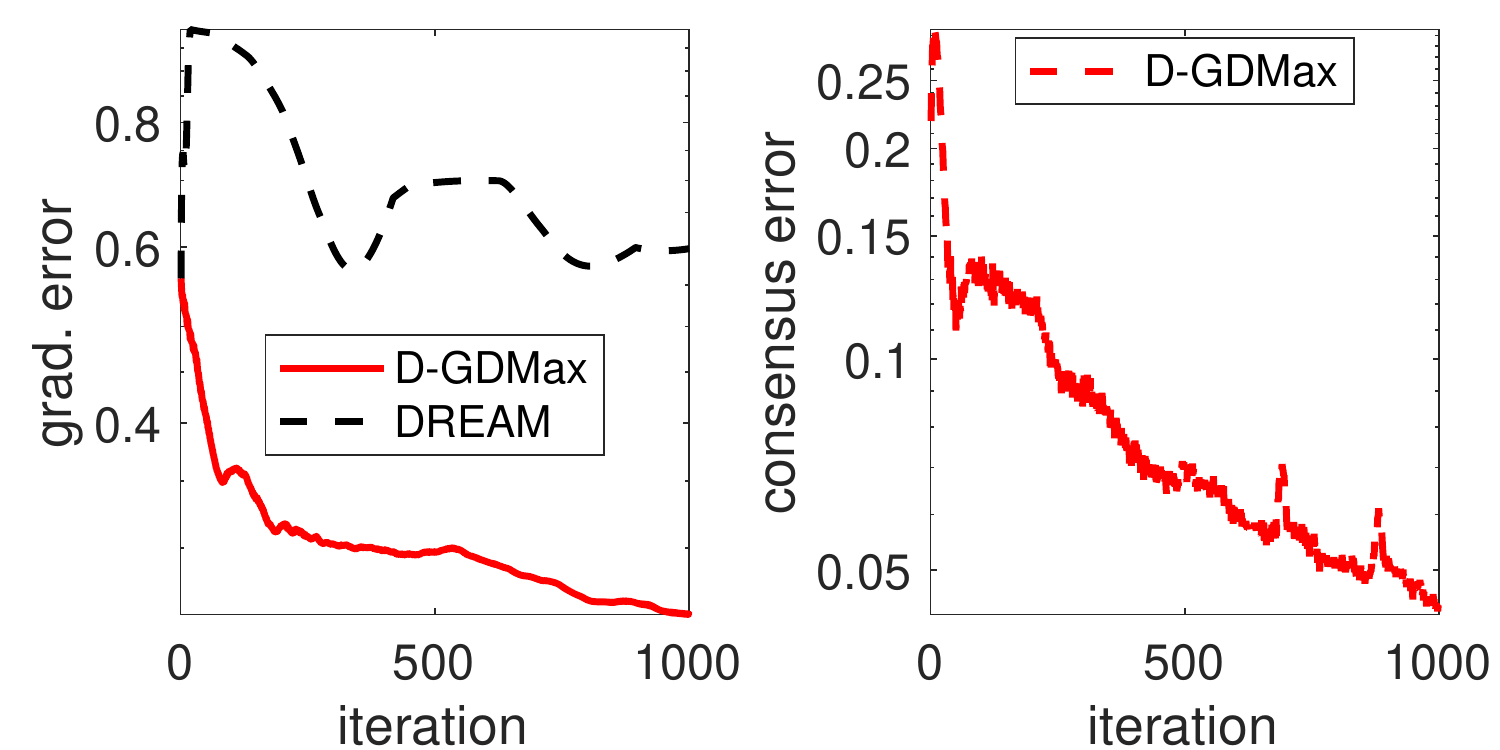} &
\includegraphics[width = 0.3\textwidth]{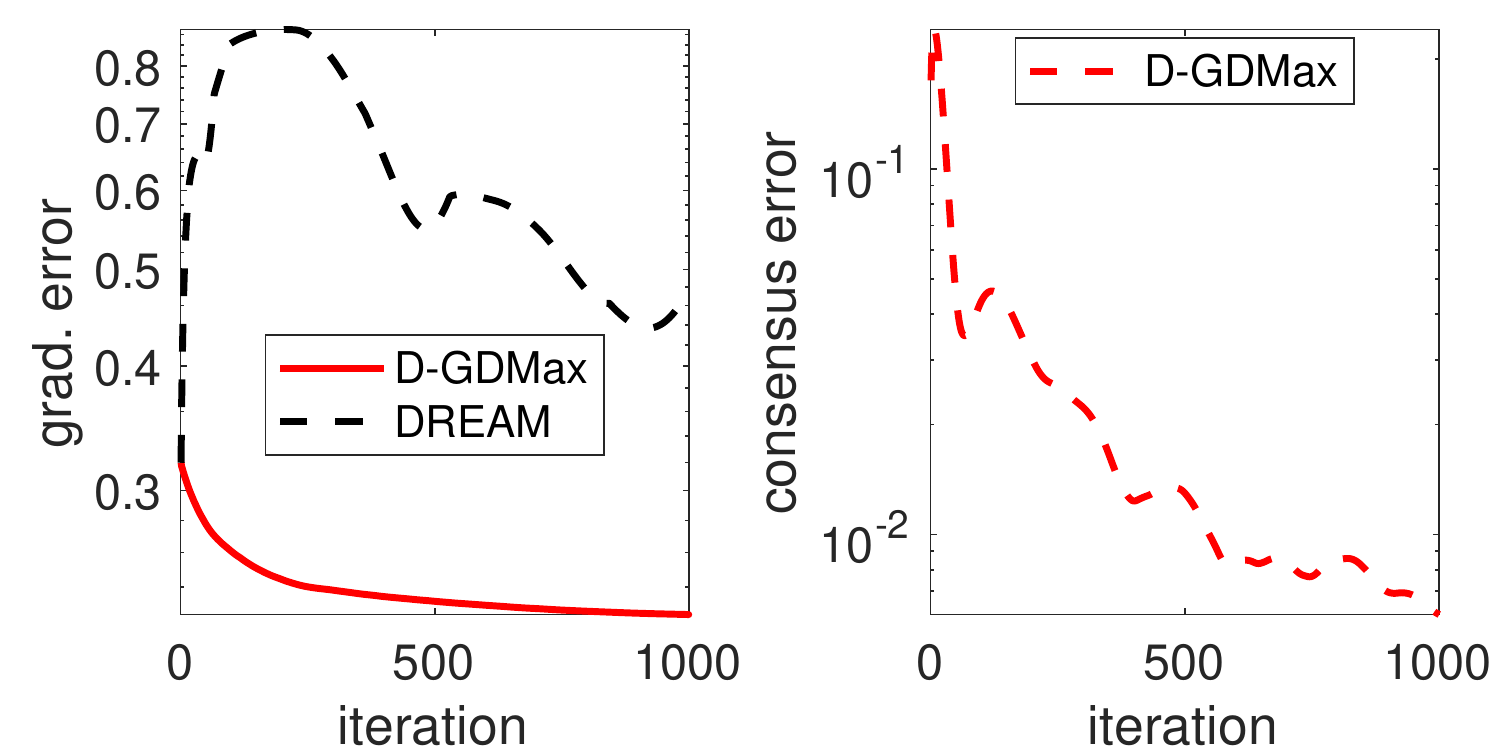} 
\end{tabular}
\resizebox{0.95\textwidth}{!}{
\begin{tabular}{cc|c|c|c|c|c}
\cline{2-7}
&\multicolumn{2}{c|}{\small $\beta_y = 0.01$} & \multicolumn{2}{|c|}{\small $\beta_y = 0.1$} & \multicolumn{2}{|c}{\small $\beta_y = 1$}\\\cline{2-7}
& D-GDMax & DREAM \cite{chen2022simple} & D-GDMax & DREAM \cite{chen2022simple} & D-GDMax & DREAM \cite{chen2022simple} \\\hline
\multicolumn{1}{c|}{a9a} & $\eta_x = 10^{-3},\eta_\lambda = 0.05$ & $\eta_x = 0.005, \eta_y = 10^{-3}$ & $\eta_x = 0.005,\eta_\lambda=10^{-3}$ & $\eta_x = 0.005, \eta_y = 10^{-3}$ & $\eta_x = 0.005,\eta_\lambda=10^{-3}$ & $\eta_x = 0.01, \eta_y = 10^{-3}$\\
\multicolumn{1}{c|}{gisette} & $\eta_x = 10^{-3},\eta_\lambda = 10^{-3}$ & $\eta_x = 0.01, \eta_y = 10^{-3}$ & $\eta_x = 10^{-3},\eta_\lambda=10^{-3}$ & $\eta_x = 0.01, \eta_y = 0.005$ & $\eta_x = 0.005,\eta_\lambda=10^{-3}$ & $\eta_x = 0.01, \eta_y = 0.5$\\
\multicolumn{1}{c|}{rcv1} & $\eta_x = 0.005,\eta_\lambda = 10^{-3}$ & $\eta_x = 0.1, \eta_y = 0.5$ & $\eta_x = 0.01,\eta_\lambda=10^{-3}$ & $\eta_x = 0.05, \eta_y = 0.5$ & $\eta_x = 0.01,\eta_\lambda=10^{-3}$ & $\eta_x = 0.1, \eta_y = 0.005$\\\hline
\end{tabular}
}
\end{center}
\caption{Primal gradient norm and consensus error at each iterate by our method (called D-GDMax) and DREAM in \cite{chen2022simple} for solving a decentralized formulation of the DRLR problem in \eqref{eq:drlr} on a9a, gisette, and rcv1 (from top to bottom) with $\beta_y$ varying from $\{0.1,1,10\}$ and $\alpha=10, \beta_x=10^{-3}$ fixed. The best stepsizes are listed in the table. A random graph with 20 nodes is used. Consensus error for DREAM stands at the order of $10^{-4}$ by 10 rounds of communication for each iteration.}\label{fig:dec-alg}
\end{figure}

\section{Conclusions}\label{sec:conclusion}
We have presented a decentralized algorithm for solving nonconvex strongly-concave (NCSC) composite minimax problems, by building a reformulation that decouples the nonsmoothness and consensus on the dual variable. To the best of our knowledge, this is the first attempt to handle decentralized nonsmooth NCSC problems. Global convergence (to stationarity) and iteration complexity results are both shown. In addition, on solving a distributionally robust logistic regression problem, the proposed algorithm gives superior numerical performance over a state-of-the-art decentralized method.

\appendix
\section{Proofs of a few lemmas} In this section, we give the proofs of a few lemmas that are used in our analysis.
\subsection{Proof of Lemma~\ref{lem:stepsize-choice}}
With the choice of $\eta_x$, it is easy to have
\begin{equation}\label{eq:stepsize-ineq1}
\textstyle \frac{2L^2(1+6\kappa^2)\eta_x^2}{(1-\rho)^3} \le \frac{2(1-\rho)}{25},\quad \frac{6mL^2\kappa^2\eta_x^2}{(1-\rho)^3} \le \frac{m(1-\rho)}{25},\quad 1-\rho - \frac{8\eta_x^2 L^2(1+ 6\kappa^2)}{(1-\rho)^3} \ge \frac{2(1-\rho)}{3}.
\end{equation}
By the definition of $\alpha_x$ and recalling $L_P = L\sqrt{1+4\kappa^2}$, it holds
$$\textstyle \alpha_x \ge \frac{1}{2m}\left(\frac{5L\sqrt{1+6\kappa^2}}{(1-\rho)^2} - 2L\sqrt{1+4\kappa^2} - L(\kappa+1)\right) \ge \frac{L(4\kappa+1)}{2m(1-\rho)^2}, $$
where the second inequality follows from $(1-\rho)^2 \le 1$ and simple calculations. In addition, we have from the formula of $\tilde\alpha_x$ that
\begin{align*}
\tilde\alpha_x \ge & \, 
\textstyle \frac{1}{2m\eta_x} - \frac{L^2(1+ 6\kappa^2)}{m L_P(1-\rho)^2} - \frac{2c(1-\rho)}{25} \nonumber 
\ge  \textstyle  \frac{1}{2m\eta_x} - \frac{L^2(1+ 6\kappa^2)}{m L_P(1-\rho)^2} - \frac{3}{25}\left(\frac{\rho^2}{2m\eta_x}+ \frac{L(4\kappa+1)}{2m} +\frac{4L^2(1+ 6\kappa^2)}{m L_P(1-\rho)^2} \right) \nonumber \\
\ge & \, \textstyle  \frac{11}{25m\eta_x} - \frac{37 L^2(1+ 6\kappa^2)}{25m L_P(1-\rho)^2} - \frac{3L(4\kappa+1)}{50m} \nonumber 
\ge  \textstyle  \frac{11 L\sqrt{1+6\kappa^2}}{5m (1-\rho)^2} - \frac{37 L(1+ 6\kappa^2)}{25m \sqrt{1+ 4\kappa^2}(1-\rho)^2} - \frac{3L(4\kappa+1)}{50m} \nonumber \\
\ge & \, \textstyle  \frac{L}{m (1-\rho)^2} \left(\left({\textstyle \frac{11}{5} - \frac{37\sqrt{3}}{25\sqrt{2}} }\right)\sqrt{1+6\kappa^2} - \frac{3(1+4\kappa)}{50}\right)\nonumber 
\ge  \textstyle  \frac{L(1+4\kappa)}{m (1-\rho)^2} \left(\frac{9}{50}-\frac{3}{50}\right) = \frac{3L(1+4\kappa)}{25m (1-\rho)^2} 
\end{align*}
where the first inequality uses the first result in \eqref{eq:stepsize-ineq1}; the second one follows from the definition of $c$ and the third result in \eqref{eq:stepsize-ineq1}; the third one holds by $\rho^2 < 1$; the fourth one uses the choice of $\eta_x$ and $L_P= L\sqrt{1+4\kappa^2}$; the fifth one is by the fact $\frac{\sqrt{1+6\kappa^2}}{\sqrt{1+4\kappa^2}} \le \frac{\sqrt{3}}{\sqrt{2}}$ and $(1-\rho)^2 \le 1$; the sixth one follows from  $\big(\frac{11}{5} - \frac{37\sqrt{3}}{25\sqrt{2}} \big)\sqrt{1+6\kappa^2} \ge \frac{9(1+4\kappa)}{50}$. 

For $\alpha_\lambda$, we have
\begin{align*}
\alpha_\lambda \ge &\, \textstyle  \frac{1}{\eta_\lambda}- \frac{L\kappa}{2} - \frac{L_P}{2} - \frac{3L^2\kappa^2}{L_P(1-\rho)^2} - \frac{cm(1-\rho)}{25}\\
\ge & \, \textstyle  \frac{1}{\eta_\lambda}- \frac{L\kappa}{2} - \frac{L_P}{2} - \frac{3L^2\kappa^2}{L_P(1-\rho)^2} - \frac{3m}{50}\left(\frac{\rho^2}{2m\eta_x}+ \frac{L(4\kappa+1)}{2m} +\frac{4L^2(1+ 6\kappa^2)}{m L_P(1-\rho)^2} \right)\\
= & \, \textstyle \frac{1}{\eta_\lambda} - \frac{3 L\rho^2 \sqrt{1+6\kappa^2}}{20(1-\rho)^2}- \frac{L}{2}\left(\frac{62\kappa}{50}+\frac{3}{50}\right) - \frac{L \sqrt{1+4\kappa^2}}{2} - \frac{3L}{\sqrt{1+4\kappa^2}(1-\rho)^2}\left(\frac{37\kappa^2}{25}+\frac{2}{25}\right) \\
\ge & \, \textstyle \frac{1}{\eta_\lambda} - \frac{L}{(1-\rho)^2}\left(\frac{3(\sqrt{6}\kappa + 1)}{20} + \frac{62\kappa}{100} + \frac{3}{100} + \kappa + \frac{1}{2}+ 3\kappa\right)
\ge  \textstyle \frac{1}{\eta_\lambda} - \frac{L(5\kappa+1)}{(1-\rho)^2} = \frac{L(4\kappa+1)}{(1-\rho)^2},
\end{align*}
where the first inequality uses the second result in \eqref{eq:stepsize-ineq1}; the second inequality follows from the definition of $c$ and the third result in \eqref{eq:stepsize-ineq1}; the third inequality is by $\sqrt{1+6\kappa^2} \le \sqrt{6}\kappa+1$, $\sqrt{1+4\kappa^2} \le 2\kappa+1$, $\frac{37\kappa^2+2}{25\sqrt{1+4\kappa^2}} \le \kappa$, and $(1-\rho)^2 \le 1$. 
Moreover, we have
\begin{align*}
\textstyle \frac{3\kappa^2}{2L_P(1-\rho)^2} + \frac{3c m\eta_x^2\kappa^2}{(1-\rho)^3} \le &\, \textstyle \frac{3\kappa^2}{2L_P(1-\rho)^2} + \frac{9 m\eta_x^2\kappa^2}{2(1-\rho)^4}\left(\frac{\rho^2}{2m\eta_x}+\frac{L(4\kappa+1)}{2m}+\frac{4L^2(1+ 6\kappa^2)}{m L_P(1-\rho)^2} \right)\\
= &\, \textstyle \frac{\kappa^2}{L\sqrt{1+4\kappa^2}(1-\rho)^2}\big(\frac{3}{2} + \frac{18}{25}\big) + \frac{9 \kappa^2 \rho^2}{20 L \sqrt{1+6\kappa^2}(1-\rho)^2} + \frac{9\kappa^2(1+4\kappa)}{100 L (1+6\kappa^2)}\\
\le &\, \textstyle \frac{\kappa^2}{L(1-\rho)^2}\left( \big(\frac{3}{2} + \frac{18}{25}\big)\frac{1}{2\kappa} + \frac{9}{20\sqrt{6}\kappa} + \frac{9}{140\kappa}\right) \le \frac{3\kappa}{2L(1-\rho)^2},
\end{align*}
where the first inequality follows from the formula of $c$ in \eqref{eq:def-little-c} and the third result in \eqref{eq:stepsize-ineq1}; the second inequality uses $\sqrt{1+4\kappa^2} \ge 2\kappa$, $\sqrt{1+6\kappa^2} \ge \sqrt{6}\kappa$, $\frac{1+4\kappa}{1+6\kappa^2} \le \frac{5}{7\kappa}$, and $1-\rho \le 1$.

Finally, it holds that
\begin{align*}
\textstyle \frac{1}{2m L_P(1-\rho)}   +\frac{c\eta_x^2}{(1-\rho)^2} \le &\, \textstyle \frac{1}{2m L_P(1-\rho)} + \frac{3\eta_x^2}{2(1-\rho)^3} \left(\frac{\rho^2}{2m\eta_x}+\frac{L(4\kappa+1)}{2m}+\frac{4L^2(1+ 6\kappa^2)}{m L_P(1-\rho)^2} \right)\\
= &\, \textstyle \frac{1}{2m L_P(1-\rho)} + \frac{3(1-\rho)}{50L^2(1+6\kappa^2)} \left(\frac{5 L \rho^2\sqrt{1+6\kappa^2}}{2m(1-\rho)^2}+\frac{L(4\kappa+1)}{2m}+\frac{4L^2(1+ 6\kappa^2)}{m L_P(1-\rho)^2} \right)\\
= &\, \textstyle \frac{37}{50m L_P(1-\rho)} + \frac{3\rho^2}{20mL\sqrt{1+6\kappa^2}(1-\rho)} + \frac{3(1-\rho)(1+4\kappa)}{100 m L (1+6\kappa^2)}\\
= & \, \textstyle \frac{37}{50m L\sqrt{1+4\kappa^2}(1-\rho)} + \frac{3\rho^2}{20mL\sqrt{1+6\kappa^2}(1-\rho)} + \frac{3(1-\rho)(1+4\kappa)}{100 m L (1+6\kappa^2)}\\
\le &\, \textstyle \frac{1}{m L (1-\rho)} \left(\frac{37}{100\kappa}+\frac{3}{20\sqrt{6}\kappa} + \frac{3}{140\kappa}\right) \le \frac{1}{2m L \kappa (1-\rho)},
\end{align*}
where the first inequality uses the definition of $c$ and the third result in \eqref{eq:stepsize-ineq1}; the second inequality follows from $\sqrt{1+4\kappa^2} \ge 2\kappa$, $\sqrt{1+6\kappa^2} \ge \sqrt{6}\kappa$, $\frac{1+4\kappa}{1+6\kappa^2} \le \frac{5}{7\kappa}$, and $1-\rho \le 1$. This completes the proof.

\subsection{Proof of Lemma~\ref{lem:bd-dit}}
By the concavity of $d_i^t$, it holds that $d_i^t(\vy_i^{t\star}) - d_i^t(\vy_i^{t-1}) \le \langle \vzeta_i^{t-1}, \vy_i^{t\star} - \vy_i^{t-1} \rangle $ for any $\vzeta_i^{t-1} \in \partial d_i^t(\vy_i^{t-1})$, namely,
\begin{equation}\label{eq:diff-d-i-t}
d_i^t(\vy_i^{t\star}) - d_i^t(\vy_i^{t-1}) \le \textstyle \left \langle \nabla_\vy f_i(\vx_i^t, \vy_i^{t-1}) - \vxi_i^{t-1} - \frac{L\sqrt{m}}{2}\tilde\vlam_i^t, \vy_i^{t\star} - \vy_i^{t-1} \right \rangle, \forall\, \vxi_i^{t-1} \in \partial h(\vy_i^{t-1}).
\end{equation}
From the condition $\dist\big(\vzero, \partial d_i^{t-1}(\vy_i^{t-1})\big) \le \delta_{t-1}$, it follows that there is a subgradient $\vxi_i^{t-1}\in \partial h(\vy_i^{t-1})$ such that $\|\nabla_\vy f_i(\vx_i^{t-1}, \vy_i^{t-1}) - \vxi_i^{t-1} - \frac{L\sqrt{m}}{2}\tilde\vlam_i^{t-1}\| \le \delta_{t-1}$, which together with \eqref{eq:diff-d-i-t} implies
\begin{equation*}
\begin{aligned}
&\,d_i^t(\vy_i^{t\star}) - d_i^t(\vy_i^{t-1}) \\
\le &\, \textstyle \left \langle \nabla_\vy f_i(\vx_i^t, \vy_i^{t-1}) - \nabla_\vy f_i(\vx_i^{t-1}, \vy_i^{t-1}) - \frac{L\sqrt{m}}{2}(\tilde\vlam_i^t - \tilde\vlam_i^{t-1}), \vy_i^{t\star} - \vy_i^{t-1} \right \rangle + \delta_{t-1}\|\vy_i^{t\star} - \vy_i^{t-1}\|.
\end{aligned}
\end{equation*}
Now using the $L$-smoothness of $f_i$ and the Young's inequality, we have from the inequality above that
\begin{equation}\label{eq:diff-d-i-t-2}
d_i^t(\vy_i^{t\star}) - d_i^t(\vy_i^{t-1}) \le \textstyle \frac{L}{2}\|\vx_i^t - \vx_i^{t-1}\|^2 + \frac{L\sqrt{m}}{4}\|\tilde\vlam_i^t - \tilde\vlam_i^{t-1}\|^2 + \left(\frac{L}{2} + \frac{L\sqrt{m}}{4} + \frac{1}{2}\right)\|\vy_i^{t\star} - \vy_i^{t-1}\|^2 + \frac{\delta_{t-1}^2}{2}. 
\end{equation}
Moreover, $\|\vy_i^{t\star} - \vy_i^{t-1}\|^2 \le 2\|\vy_i^{t\star} - \vy_i^{t-1\star}\|^2 + 2\|\vy_i^{t-1\star} - \vy_i^{t-1}\|^2 \le 2\|\vy_i^{t\star} - \vy_i^{t-1\star}\|^2 + \frac{2\delta_{t-1}^2}{\mu^2}$, where the second inequality follows from the proof of Prop.~\ref{prop:error-bd-vYt}. Hence, \eqref{eq:diff-d-i-t-2} indicates
\begin{align}\label{eq:diff-d-i-t-3}
&\,d_i^t(\vy_i^{t\star}) - d_i^t(\vy_i^{t-1}) \cr
\le &\, \textstyle \frac{L}{2}\|\vx_i^t - \vx_i^{t-1}\|^2 + \frac{L\sqrt{m}}{4}\|\tilde\vlam_i^t - \tilde\vlam_i^{t-1}\|^2 + \left(L + \frac{L\sqrt{m}}{2} + 1\right)\big(\|\vy_i^{t\star} - \vy_i^{t-1*}\|^2 + \frac{\delta_{t-1}^2}{\mu^2}\big) + \frac{\delta_{t-1}^2}{2} \cr
\le &\, \textstyle \frac{L}{2}\|\vX^t -\vX^{t-1}\|_F^2 + \frac{L\sqrt{m}}{4}\|\tilde\vLam^t - \tilde\vLam^{t-1}\|_F^2 + \left(L + \frac{L\sqrt{m}}{2} + 1\right)\big(\|\vY^{t\star} - \vY^{t-1*}\|_F^2 + \frac{\delta_{t-1}^2}{\mu^2}\big) + \frac{\delta_{t-1}^2}{2} . 
\end{align}
Furthermore, by \eqref{eq:lip-S2}, it follows that $\|\vY^{t\star} - \vY^{t-1*}\|_F^2 \le 2\kappa^2\|\vX^t - \vX^{t-1}\|_F^2 + 2m\kappa^2\|\vLam^t - \vLam^{t-1}\|_F^2$. Thus from \eqref{eq:diff-d-i-t-3} and $\|\tilde\vLam^t - \tilde\vLam^{t-1}\|_F^2 \le 4\|\vLam^t - \vLam^{t-1}\|_F^2$, we obtain
\begin{equation}\label{eq:diff-d-i-t-4}
\begin{aligned}
d_i^t(\vy_i^{t\star}) - d_i^t(\vy_i^{t-1}) \le &\,  \textstyle \left(\frac{L}{2} + \kappa^2 (2L + L\sqrt{m} + 2)\right)\|\vX^t -\vX^{t-1}\|_F^2 \\
& \, \hspace{-2cm}\textstyle + \big(L\sqrt{m} + m\kappa^2(2L + L\sqrt{m} + 2)\big)  \|\vLam^t - \vLam^{t-1}\|_F^2 +  \left(L + \frac{L\sqrt{m}}{2} + 1\right) \frac{\delta_{t-1}^2}{\mu^2} + \frac{\delta_{t-1}^2}{2}. 
\end{aligned}
\end{equation}
Finally, we use the second inequality in \eqref{eq:x-perp} and \eqref{eq:bd-sum-x-lam-2}--\eqref{eq:x-perp2-1} in the inequality above to obtain 
\begin{equation*}
\begin{aligned}
&\, d_i^t(\vy_i^{t\star}) - d_i^t(\vy_i^{t-1}) \\
\le &\,  \textstyle \left(\frac{L}{2} + \kappa^2 (2L + L\sqrt{m} + 2)\right)\left(\frac{50 mC_{0,t-1} (1-\rho)^2}{3L(1+4\kappa)} + \frac{12(1-\rho)}{25 L^2(1+6\kappa^2)} \big\|\vV_\perp^{0}\big\|_F^2 + \frac{212m C_{0,t-1} (1-\rho)^2}{25L(4\kappa+1)} + \frac{12m}{25 L^2} \Delta_{t-1}\right) \\
& \, \textstyle + \big(L\sqrt{m} + m\kappa^2(2L + L\sqrt{m} + 2)\big) \frac{C_{0,t-1}(1-\rho)^2}{L(4\kappa+1)} +  \left(L + \frac{L\sqrt{m}}{2} + 1\right) \frac{\delta_{t-1}^2}{\mu^2} + \frac{\delta_{t-1}^2}{2}.
\end{aligned}
\end{equation*}
Combining like terms in the inequality above and noting $\frac{50}{3} + \frac{212}{25}\le 21, \frac{12}{25}\le \frac{1}{2}$ gives the desired result.

\section{Failure of Minty's VI condition for NCSC problems} We give a simple example to show that the Minty's VI condition  assumed in \cite{liu2020decentralized} to handle nonconvex nonconcave problems can fail for NCSC minimax problems. Let $f(x,y) = -x^2y + \frac{1}{2}y^2$ for any $x\in \RR, y \in \RR$. Then $\min_{-1\le x \le 1} \max_y f(x,y)$ is a well-defined NCSC minimax problem and satisfies Assumption~\ref{assump-func} that we make. To see this, given any $x$, we have $y^* :=\argmax_y f(x, y) = x^2$. Hence, the primal problem is $\min_{-1\le x \le 1} p(x):= -\frac{1}{2}x^4$, which has two finite minimizers $x^*=\pm 1$. However, the Minty's condition, i.e., there is $(\bar x, \bar y)$ with $-1\le \bar x \le 1$ such that $\big\langle (f'_x(x,y), -f'_y(x,y)), (x,y) - (\bar x, \bar y) \big\rangle = -2xy(x-\bar x) + (x^2-y)(y-y^*) \ge 0, \forall \, -1\le x \le 1, \forall\, y\in \RR$, cannot hold, because for any $(\bar x, \bar y)$ and a fixed $x\in [-1,1]$, it is easy to have $-2xy(x-\bar x) + (x^2-y)(y-y^*) \to -\infty$ as $|y|\to \infty$. 

\bibliographystyle{abbrv}

\end{document}